\newcommand{\beq}{\begin{equation}}
\newcommand{\eeq}{\end{equation}}
\newcommand{\beqs}{\begin{equation*}}
\newcommand{\eeqs}{\end{equation*}}
\newcommand{\ba}{\begin{array}}
\newcommand{\ea}{\end{array}}
\newcommand{\beas}{\begin{eqnarray*}}
\newcommand{\eeas}{\end{eqnarray*}}
\newcommand{\bea}{\begin{eqnarray}}
\newcommand{\eea}{\end{eqnarray}}
\newcommand{\bal}{\begin{align}}
\newcommand{\eal}{\end{align}}
\newcommand{\bals}{\begin{align*}}
\newcommand{\eals}{\end{align*}}
\newcommand{\al}{\alpha}
\newcommand{\R}{\ensuremath{\mathbb R}}
\newcommand{\C}{\ensuremath{\mathbb C}}
\newcommand{\N}{\ensuremath{\mathbb N}}
\newcommand{\Z}{\ensuremath{\mathbb Z}}
\newcommand{\inprod}[1]{\langle{#1}\rangle}
\newcommand{\doubleinprod}[1]{\langle\!\langle{#1}\rangle\!\rangle}
\newcommand{\bds}{\begin{displaystyle}}
\newcommand{\eds}{\end{displaystyle}}
\def\eqdef{\stackrel{\rm def}{=}}
\def\expsim{\stackrel{\rm exp.}{\sim}}
\def\polsim{\sim}
\newcommand{\bvec}[1]{\mathbf{#1}}
\def\vecx{\bvec x}
\def\vece{\bvec e}
\def\vecu{\bvec u}
\def\vecv{\bvec v}
\def\vecx{\bvec x}
\def\vecw{\bvec w}
\def\veck{\bvec k}
\def\varep{\varepsilon}
\def\ddt{\frac{d}{dt}}
\newtheorem{theorem}{Theorem}[section]
\newtheorem{lemma}[theorem]{Lemma}
\newtheorem{corollary}[theorem]{Corollary}
\newtheorem{definition}[theorem]{Definition}
\theoremstyle{remark}
\newtheorem{remark}[theorem]{\bf{Remark}}
\newtheorem{example}[theorem]{\bf{Example}}
\def\mD{\mathcal D}
\numberwithin{equation}{section}
\title{\textbf{Long-time asymptotic expansions for Navier-Stokes equations with power-decaying forces}}
\author{Dat Cao and Luan Hoang}
\date{\today}
\begin{document}
%%%%%%%%%%%%%%%%%%%%%%%%%%%%%%%%%%%%%%%
\maketitle

\begin{center}
Department of Mathematics and Statistics, Texas Tech University\\
Box 41042, Lubbock, TX 79409-1042, U.S.A.\\
Email addresses: \texttt{dat.cao@ttu.edu, luan.hoang@ttu.edu}

% \medskip
% $^{2}$Mathematics Department, Tulane University\\
% 6823 St. Charles Ave, New Orleans, LA 70118, U.S.A.\\
% Email address: \texttt{vmartin6@tulane.edu}
\end{center}

%%%%%%%%%%%%%%%%%%%%%%%%%%%%%%%%%%%%%%%%
\begin{abstract}
%%%%%%%%%%%%%%%%%%%%%%%%%%%%%%%%%%%%%%%%
The Navier-Stokes equations for viscous, incompressible fluids are studied in the three-dimensional periodic domains,
with the body force having an asymptotic expansion, when time goes to infinity, in terms of power-decaying functions in a Sobolev-Gevrey space. Any Leray-Hopf weak solution is proved to have an asymptotic expansion of the same type in the same space, which is uniquely determined by the force, and independent of the individual solutions. In case the expansion is convergent, we show that the next asymptotic approximation for the solution must be an exponential decay. 
Furthermore, the convergence of the expansion and the range of its coefficients, as the force varies, are investigated.
\end{abstract}
%%%%%%%%%%%%%%%%%%%%%%%%%%%%%%%%%%%%%%%%
%\tableofcontents
%============================================================
%\myclearpage
%%%%%%%%%%%%%%%%%%%%%%%%%%%%%%%%%%%%%%%%
\section{Introduction}\label{intro}

The Navier-Stokes equations (NSE) are non-linear partial differential equations that describe the dynamics of viscous, incompressible fluids. The mathematics of NSE has proven to be quite important, intriguing and challenging. 
In particular, understanding the long-term behaviors of the solutions of NSE would be insightful to many hydrodynamical phenomena.
Unfortunately, such a level of mathematical understanding is still not available in general.
However, under some circumstances, the mathematics is more accessible and much has been understood. One such case is when the body force is potential, which has many papers devoted to \cite{DE1968,FS83,FS84a,FS84b,FS87,FS91,FHOZ1,FHOZ2,FHN1,FHN2,FHS1}. 
(We caution that these works provide deep understanding of the solutions despite the fact that they go to zero as time becomes large. They are different from those studying the case of large forces, which are more oriented toward the theory of turbulence.) 
The case when the force is non-potential and decays exponentially in time  has only been studied recently in \cite{HM2}. 
The current paper follows this direction of research. We aim to understand the long-term behavior of the solutions in case the force is larger than those considered in \cite{HM2}. More importantly, we hope to find new phenomena due to the different structure of the force, and describe precisely how  the asymptotic properties of the force determine the asymptotic behavior of the solutions.

First, we recall the mathematical formulation of the NSE, and specify the context in which we study them.
Let $\vecu(\vecx,t)\in\R^3$ denote the  velocity vector field and  $p(\vecx,t)\in\R$ denote the pressure of a viscous, incompressible fluid, where  $\vecx\in \R^3$ is the vector of spatial variables, and $t\in\R$ is the time variable. 
The (kinematic) viscosity of the fluid is a constant $\nu>0$.
The body force acting on the fluid is $\mathbf f(\vecx,t)\in\R^3$. The NSE are the following system of partial differential equations
\begin{align}\label{nse}
\begin{split}
&\bds \frac{\partial \vecu}{\partial t}\eds  + (\vecu\cdot\nabla)\vecu -\nu\Delta \vecu = -\nabla p+\mathbf f \quad\text{on }\R^3\times(0,\infty),\\
&\textrm{div } \vecu = 0  \quad\text{on }\R^3\times(0,\infty).
\end{split}
\end{align}
Above, the first equation is the balance of momentum, while the second one is the incompressibility condition.

The initial condition specified for the velocity is 
\beq\label{ini}
\vecu(\vecx,0) = \vecu^0(\vecx),
\eeq 
where  $\vecu^0(\vecx)$ is a given divergence-free vector field. 
 
In our current study, the force $\mathbf f(\vecx,t)$ and solutions $(\vecu(\vecx, t),p(\vecx, t))$ are considered to belong to the class of $L$-periodic functions for some $L>0$, that is, the class of functions $g(\vecx)$ that satisfy
\beqs%\label{Pcond} 
g(\vecx+L\vece_j)=g(\vecx)\quad \textrm{for all}\quad \vecx\in \R^3,\ j=1,2,3,\eeqs
where  $\{\vece_1,\vece_2,\vece_3\}$ is the standard basis of $\R^3$.
Such a consideration will simplify our mathematical analysis since it avoids the case of unbounded domains, and the no-slip boundary condition usually imposed on bounded domains. 

By a Galilean transformation, see e.g. \cite{HM2}, we can assume that $\mathbf f(\vecx,t)$ and $\vecu(\vecx,t)$, for all $t\ge 0$, have zero averages over the domain $\Omega=(-L/2, L/2)^3$, that is, their spatial integrals over $\Omega$ are zero.
% A function $g(\vecx)$ is said to have zero average over $\Omega$ if 
% \beqs %\label{Zacond} 
% \int_\Omega g(\vecx)d\vecx=0.
% \eeqs

Thanks to the Leray-Helmholtz decomposition, and for the sake of convenience, we assume further that $\mathbf f(\vecx,t)$ is divergence-free for all $t\ge 0$. 

By rescaling the variables $\vecx$ and $t$, we assume throughout, without loss of generality, that  $L=2\pi$ and $\nu =1$.
With this assumption, the equations in \eqref{nse} are adimensional now.

\medskip
In studying the dynamics of NSE, the function $\vecu(\vecx,t)$ of several variables can be viewed as a function of $t$ valued in some functional space.
For time-dependent functions of such type,  their  asymptotic properties, as time goes to infinity, can be understood most precisely if some form of asymptotic expansions is established. We discuss, in this paper, the following two types of expansions. Briefly speaking, one expansion is in terms of exponential decaying functions with polynomial coefficients, and the other of power-decaying ones.

\begin{definition}\label{expanddef}
Let $(X, \|\cdot\|)$ be a real normed space, $g$ be a function from $(0,\infty)$ to $X$, and $(\gamma_n)_{n=1}^\infty$ be a strictly increasing, divergent sequence of positive numbers.

{\rm (a)} The function $g$ is said to have the asymptotic expansion
\beq \label{gexpand}
g(t) \expsim \sum_{n=1}^\infty g_n(t)e^{-\gamma_nt} \text{ in } X,
\eeq
where $g_n(t)$'s are $X$-valued polynomials in $t$, if for any $N\geq1$, there exists $\beta_N>\gamma_N$ such that
\beqs% \label{grem}
\Big\|g(t)- \sum_{n=1}^N g_n(t)e^{-\gamma_n t}\Big\|=\mathcal O(e^{-\beta_N t})\ \text{as }t\to\infty.
\eeqs

{\rm (b)} The function $g$ is said to have the asymptotic expansion
	\beq \label{polexpand}
g(t) \polsim \sum_{n=1}^\infty \xi_n t^{-\gamma_n} \text{ in } X,
\eeq
where $\xi_n$'s are elements in $X$, if for any $N\geq1$, there exists $\beta_N>\gamma_N$ such that
\beqs %\label{polrem}
\Big\|g(t)- \sum_{n=1}^N \xi_n t^{-\gamma_n}\Big\|=\mathcal O(t^{-\beta_N})\ \text{as }t\to\infty.
\eeqs

\end{definition}

Throughout the paper, we will make use of the following notation  
$$u(t)=\vecu(\cdot,t),\  f(t)=\mathbf f(\cdot,t),\  u^0=\vecu^0(\cdot).$$ 
Note that $u^0$, and each value of $u(t)$, $f(t)$ belong to some functional spaces.

%The solutions of NSE, in case $\mathbf f(\vecx,t)$ is a potential function, turn out to have expansions of type (a).

\medskip
In case the force  $\mathbf f$ in NSE is a potential function, i.e., $\mathbf f(\vecx,t)=-\nabla \phi(\vecx,t)$ for some scalar function $\phi$, it is well-known that any Leray-Hopf weak solution becomes regular eventually and decays in $H^1(\Omega)$-norm exponentially.
The first precise asymptotic behavior is proved by Foias and Saut \cite{FS84a}. Namely, for any  non-trivial, regular solution
$u(t)$ in bounded or periodic domains, there exist an eigenvalue $\lambda$  of the Stokes operator and a corresponding eigenfunction $\xi$ such that
\beqs
\lim_{t\to\infty} e^{\lambda t}u(t)=\xi,   \text{ where the limit holds in all Sobolev norms.}
\eeqs

Moreover,  they showed in  \cite{FS87}  that any such solution admits an  asymptotic expansion 
\beq \label{expand}
u(t) \expsim \sum_{n=1}^\infty q_n(t)e^{-\mu_nt}
\eeq
in Sobolev spaces $H^m(\Omega)^3$ for all $m\ge 0$. Here, $\{\mu_n:n\in\N\}$ is the additive semigroup generated by the spectrum of the Stokes operator. It was then improved in \cite{HM1}, for the case of periodic domains, that the expansion holds in any Gevrey spaces $G_{\alpha,\sigma}$, see section \ref{Prelim} for details.

Studying the asymptotic expansion \eqref{expand} leads to theories of associated normalization map and invariant nonlinear manifolds \cite{FS83,FS84a,FS84b,FS87,FS91}, Poincar\'e-Dulac normal form for NSE  \cite{FHOZ1,FHOZ2,FHS1}; they were applied to analysis of helicity, statistical solutions of the NSE, and decaying turbulence \cite{FHN1,FHN2}. 
It  provides  fine details for the long-time behavior of the solutions, and sheds some insights into the nonlinear structure of NSE.
See  also \cite{KukaDecay2011} for a result in $\mathbb R^3$, \cite{Shi2000} for expansions for dissipative wave equations, and the survey paper \cite{FHS2} for more information on the subject.

\medskip
Regarding the problem of establishing the expansion \eqref{expand}, the simplified approach in \cite{HM1}, for NSE in the periodic domains, turns out to be easily adapted to the case of non-potential forces \cite{HM2}.
We recall here a result in this direction -- Theorem 2.2 of \cite{HM2}. 

\textit{Assume  there exists $\sigma\ge 0$, such that  
\beq\label{fexpand}
f(t)\expsim \sum_{n=1}^\infty f_n(t)e^{-nt} \text{ in $G_{\alpha,\sigma}$ for all $\alpha\ge 0$.} 
\eeq
Then any Leray-Hopf weak solution $u(t)$ of \eqref{nse} and \eqref{ini} admits an asymptotic expansion
\beq\label{uperi}
u(t)\expsim \sum_{n=1}^\infty q_n(t)e^{-nt} \text{ in $G_{\alpha,\sigma}$ for all $\alpha\ge 0$.} 
\eeq
}

%\vspace*{2mm}
\medskip
We note that the expansion \eqref{fexpand} of $f$ is of type \eqref{gexpand}, and so are the expansions \eqref{expand} and \eqref{uperi}. A natural question arising is whether one can establish the same results for other types of decaying forces.
This paper studies a particular case when $f$ has an asymptotic expansion of type \eqref{polexpand} instead. 
More specifically, assume  there exist $\alpha\ge 1/2$ and $\sigma\ge 0$ such that 
\beq\label{fpol}
f(t)\polsim \sum_{n=1}^\infty \psi_n t^{-\gamma_n}\quad\text{in } G_{\alpha,\sigma}.
\eeq
%Here, $\{\gamma_n\}_{n=1}^\infty$ is a strictly increasing, divergent sequence of positive numbers.

We will derive a corresponding expansion for solutions of NSE. 
First, rewrite \eqref{fpol} as
\beqs %\label{fexp}
f(t)\polsim \sum_{n=1}^\infty \phi_n t^{-\mu_n }\quad\text{in } G_{\alpha,\sigma},
\eeqs
where $(\mu_n)_{n=1}^\infty$ is an appropriate sequence of powers generated by $\gamma_n$'s.

We prove that there exist $\xi_n\in G_{\alpha+1,\sigma}$, for all $n\in\N$, which are explicitly determined by $\phi_n$'s, such that any Leray-Hopf weak solution $u(t)$ will admit the following expansion
\beq\label{upol}
u(t)\polsim \sum_{n=1}^\infty \xi_n t^{-\mu_n} \quad\text{in } G_{\alpha+1-\rho,\sigma},\text{ for all }\rho\in (0,1).
\eeq

The expansion \eqref{upol} has the following new features.
\begin{enumerate}[label=\rm (\alph*)]
 \item All Leray-Hopf weak solutions have the \textit{same expansion}, depending only on the force, regardless even their uniqueness and global regularity.

 \item The expansion of solution $u$ is established with 
the force $f$ belonging to a Sobolev-Gevrey space $G_{\alpha,\sigma}$ for \textit{fixed} $\alpha$ and $\sigma$. This contrasts with the requirement in \eqref{fexpand} that $f\in G_{\alpha,\sigma}$ for \textit{all} $\alpha\ge 0$. 
\end{enumerate}

We also note that the force in \eqref{fpol}, though decays to zero, is much larger than the one in \eqref{fexpand}, as $t\to\infty$.

Although our proof follows the scheme installed in \cite{FS87} and \cite{HM1,HM2}, we take advantage of the new structure of the force $f$ to make significant improvements in estimates, and succeed in quantifying the effects of such structure on that of the solution $u$.

\medskip
Since the expansion \eqref{upol} is convergent in many cases, we investigate what may be the next approximation of the solution after this expansion.
Specifically, if $f(t)=\sum_{n=1}^\infty \phi_n t^{-n}$ and  $\bar u(t)\eqdef\sum_{n=1}^\infty \xi_n t^{-n}$ are  uniformly convergent in appropriate spaces for large $t$, then
%we prove that there exists a real number $\beta\ge 0$ such that
$u(t)-\bar u(t)$ is proved to decay at least at the rate $t^{\beta}e^{-t}$, as $t\to\infty$, for some number $\beta\ge 0$.
This result rules out any intermediate approximation of $u$ after $\bar u$ that is between the power and exponential decays.

\medskip
The paper is organized as follows. 
Section \ref{Prelim} reviews the functional setting for NSE, some basic inequalities for Sobolev and Gevrey norms, and estimates for the bi-linear form $B(u,v)$ in NSE.
Lemma \ref{ineqlem} describes the asymptotic behavior of certain integrals which will be utilized repeatedly in asymptotic estimates for large time.
Particularly, it is used in Lemma \ref{odelem} to establish the limit, as $t\to\infty$,  and the remainder estimates for solutions of certain linearized NSE.
This will be a building block of proving the asymptotic expansion \eqref{upol}.
In section \ref{Gdecay}, we establish the power-decay for any Leray-Hopf weak solutions, cf. Theorem \ref{theo23}. It combines standard energy estimates, when time is large, with Theorem \ref{theo22}, which proves strong  asymptotic bounds for solutions in Gevrey spaces when the initial data and the force are small. 
In section \ref{expsec}, the asymptotic expansion \eqref{upol} is obtained, either as a finite sum in Theorem \ref{finitetheo}, or an infinite sum in Theorem \ref{mainthm}.
As mentioned in remarks (a) and (b) above, the same expansion holds for \textit{all} Leray-Hopf weak solutions, and only requires the force $f$ to belong to a \textit{fixed} Sobolev-Gevrey space, namely, $G_{\alpha,\sigma}$. Moreover, the expansion of the solution $u$ holds in even more regular space, $G_{\alpha+1-\rho,\sigma}$, than that of $f$.
This feature is possible because of the higher regularity for the elements $\xi_n$'s in Lemma \ref{xireg}, and the remainder estimate in Lemma \ref{odelem}. 
It is also worth mentioning that the $\xi_n$'s are explicitly determined by the recursive formulas \eqref{xi1} and \eqref{xin1} without solving any ordinary differential equations (in functional spaces) which was the case for the expansions \eqref{expand} and \eqref{uperi}.
Section \ref{furthersec} deals with the convergence of the expansions, and the range of $\xi_n$'s as the force varies.
In case $\gamma_n=\mu_n=n$ for all $n$, it turns out that the expansion \eqref{upol} can be any finite sum, or an infinite sum with the norms $\|\xi_n\|_{G_{\alpha+1,\sigma}}$ decaying in a certain, but still very general, way, see Theorem \ref{conv-series}, Example \ref{cn} and Corollary \ref{finite}.
Since the sequence $(\xi_n)_{n=1}^\infty$ completely determines the asymptotic expansion \eqref{upol}, it plays a similar role to the normalization map $W$ in \cite{FS87,FS91}. Therefore, a few comparisons between them are made in Remark \ref{normalcompare}.
Another topic in this section is to find out what will be the next approximation of the solution $u$ after the expansion \eqref{upol}. It is proved in Theorem \ref{nextrate} that, in case the expansion converges, say, to $\bar u$, the remainder $u-\bar u$ must decay exponentially. 

\section{Preliminaries}\label{Prelim}

\subsection{Background for NSE}
Let $L^2(\Omega)=H^0(\Omega)$ and $H^m(\Omega)=W^{m,2}(\Omega)$, for integers $m\ge 0$, denote the standard Lebesgue and Sobolev spaces on $\Omega$.
The standard inner product and norm in $L^2(\Omega)^3$ are denoted by $\inprod{\cdot,\cdot}$ and $|\cdot|$, respectively.
(We warn that this  notation  $|\cdot|$ also denotes the Euclidean norm in $\R^n$ and $\C^n$, for any $n\in\N$, but its meaning will be clear based on the context.)

Let $\mathcal{V}$ be the set of all $2\pi$-periodic trigonometric polynomial vector fields which are divergence-free and  have zero average over $\Omega$.  
Define
$$H, \text{ resp. } V\ =\text{ closure of }\mathcal{V} \text{ in }
L^2(\Omega)^3, \text{ resp. } H^1(\Omega)^3.$$
Notice that each element of $H$ is divergence-free and has zero average over $\Omega$,
 and each element of $V$ is $2\pi$-periodic.

We use the following embeddings and identification
$$V\subset H=H'\subset V',$$ 
where each space is dense in the next one, and the embeddings are compact.

%On $V$, define the norm $\norm{\cdot}$ defined by
%$$\norm{u}=\left ( \sum_{j,k=1}^3
%\int_\Omega  \left|\frac{ \partial u_j(\vecx)}{\partial x_k} \right|^2 d\vecx \right)^{1/2}\text{ for }u=\vecu(\cdot)=(u_1,u_2,u_3)\in V.
%$$

Let $\mathcal{P}$ denote the orthogonal (Leray) projection in $L^2(\Omega)^3$ onto $H$.

The Stokes operator $A$ is a bounded linear mapping from $V$ to its dual space $V'$ defined by  
\beqs
\inprod{A\vecu,\vecv}_{V',V}=
\doubleinprod{\vecu,\vecv}
\eqdef \sum_{j=1}^3 \inprod{ \frac{\partial \vecu}{\partial x_j} , \frac{\partial \vecv}{\partial x_j} }\quad \text{for all } \vecu,\vecv\in V.
\eeqs

As an unbounded operator on $H$, the operator $A$ has the domain $\mD(A)=V\cap H^2(\Omega)^3$, and, under  the current consideration of periodicity conditions, 
\beqs A\vecu = - \mathcal{P}\Delta \vecu=-\Delta \vecu\in H \quad \textrm{for all}\quad \vecu\in\mD(A).
\eeqs

The spectrum  of $A$ is known to be 
$$\sigma(A)=\{|\veck|^2: \ \veck\in\Z^3, \veck\ne \mathbf 0\},$$
and each $\lambda\in \sigma(A)$ is an eigenvalue.
Note that $\sigma(A)\subset \N$ and $1\in\sigma(A)$, hence, the additive semigroup generated by $\sigma(A)$ is $\N$.

For $n\in\sigma (A)$, we denote by  $R_n$ the orthogonal projection in $H$ on the eigenspace of $A$ corresponding to $n$,
and set $$P_n=\sum_{j\in \sigma(A),j\le n}R_j.$$ 
Note that each vector space $P_nH$ is finite dimensional.

\medskip 
For $\alpha,\sigma \in \R$ and  $\vecu=\sum_{\veck\ne \mathbf 0} 
\widehat \vecu(\veck)e^{i\veck\cdot \vecx}$, define
$$A^\alpha \vecu=\sum_{\veck\ne \mathbf 0} |\veck|^{2\alpha} \widehat \vecu(\veck)e^{i\veck\cdot 
\vecx},\quad e^{\sigma A^{1/2}} \vecu=\sum_{\veck\ne \mathbf 0} e^{\sigma 
|\veck|} \widehat \vecu(\veck)e^{i\veck\cdot 
\vecx},$$
and, hence,
$$A^\alpha e^{\sigma A^{1/2}} \vecu=e^{\sigma A^{1/2}}A^\alpha  \vecu=\sum_{\veck\ne \mathbf 0} |\veck|^{2\alpha}e^{\sigma 
|\veck|} \widehat \vecu(\veck)e^{i\veck\cdot 
\vecx}.$$

The  Gevrey spaces are defined by
\beqs
G_{\alpha,\sigma}=\mD(A^\alpha e^{\sigma A^{1/2}} )\eqdef \{ \vecu\in H: |\vecu|_{\alpha,\sigma}\eqdef |A^\alpha 
e^{\sigma A^{1/2}}\vecu|<\infty\},
\eeqs
and, in particular, when $\sigma=0$,  the domain of the fractional operator $A^\al$ is 
\beqs 
\mD(A^\alpha)=G_{\alpha,0}=\{ \vecu\in H: |A^\alpha \vecu|=|\vecu|_{\alpha,0}<\infty\}.
\eeqs

Clearly, each space $G_{\alpha,\sigma}$ with the norm $|\cdot|_{\alpha,\sigma}$ is a Banach space.

Thanks to the zero-average condition, the norm $|A^{m/2}\vecu|$ is equivalent to $\|\vecu\|_{H^m(\Omega)^3}$ on the space $\mathcal D(A^{m/2})$ for $m=0,1,2,\ldots$

Note that $\mD(A^0)=H$,  $\mD(A^{1/2})=V$, and  $\|\vecu\|\eqdef |\nabla \vecu|$ is equal to $|A^{1/2}\vecu|$ for $\vecu\in V$.
Also, the norms $|\cdot|_{\alpha,\sigma}$ are increasing in $\alpha$, $\sigma$, hence, the spaces $G_{\alpha,\sigma}$ are decreasing in $\alpha$, $\sigma$.

\medskip
Regarding the nonlinear term in the NSE, a bounded linear map $B:V\times V\to V'$ is defined by
\beqs%\label{defBuv}
\inprod{B(\vecu,\vecv),\vecw}_{V',V}=b(\vecu,\vecv,\vecw)\eqdef \int_\Omega ((\vecu\cdot \nabla) \vecv)\cdot \vecw\, d\vecx, \quad \textrm{for all}\quad \vecu,\vecv,\vecw\in V.
\eeqs 
In particular,
\beqs%\label{defBuv}
B(\vecu,\vecv)=\mathcal{P}((\vecu\cdot \nabla) \vecv), \quad \textrm{for all}\quad \vecu,\vecv\in\mD(A).
\eeqs

The problems  \eqref{nse} and \eqref{ini} can now be rewritten in the functional form as
\begin{align}\label{fctnse}
&\frac{du(t)}{dt} + Au(t) +B(u(t),u(t))=f(t) \quad \text{ in } V' \text{ on } (0,\infty),\\
\label{uzero} 
&u(0)=u^0\in H.
\end{align}
(We refer the reader to the books \cite{LadyFlowbook69,CFbook,TemamAMSbook,TemamSIAMbook} for more details.)

%We follow \cite{FMRTbook} in using the next definition of weak solutions.

The next definition makes precise the meaning of weak solutions of \eqref{fctnse}.

\begin{definition}\label{lhdef}
Let $f\in L^2_{\rm loc}([0,\infty),H)$.
A \emph{Leray-Hopf weak solution} $u(t)$ of \eqref{fctnse} is a mapping from $[0,\infty)$ to $H$ such that 
\beqs%\label{lh:wksol}
u\in C([0,\infty),H_{\rm w})\cap L^2_{\rm loc}([0,\infty),V),\quad u'\in L^{4/3}_{\rm loc}([0,\infty),V'),
\eeqs
and satisfies 
\beq\label{varform}
\ddt \inprod{u(t),v}+\doubleinprod{u(t),v}+b(u(t),u(t),v)=\inprod{f(t),v}
\eeq
in the distribution sense in $(0,\infty)$, for all $v\in V$, and the energy inequality
\beq\label{Lenergy}
\frac12|u(t)|^2+\int_{t_0}^t \|u(\tau)\|^2d\tau\le \frac12|u(t_0)|^2+\int_{t_0}^t \langle f(\tau),u(\tau)\rangle d\tau
\eeq
holds for $t_0=0$ and almost all $t_0\in(0,\infty)$, and all $t\ge t_0$.  
Here, $H_{\rm w}$ denotes the topological vector space $H$ with the weak topology.
 
If a Leray-Hopf weak solution belongs to $C([0,\infty),V)$, it is called a regular solution.
\end{definition}

\medskip
We assume throughout the paper that 

\medskip
\textbf{(A)} \textit{The function $f$ belongs to $L^\infty_{\rm loc}([0,\infty),H)$.}
\medskip

Under assumption (A), for any $u^0\in H$, there exists a Leray-Hopf weak solution $u(t)$ of \eqref{fctnse} and \eqref{uzero}, see e.g. \cite{FMRTbook}. 
The large-time behavior of $u(t)$ is the focus of our study.
More specific conditions on $f$ will be imposed later. 
%It is not strict since we later consider decaying $f(t)$ in even Sobolev norms.

We note that, thanks to Remark 1(e) of  \cite{FRT2010}, the Leray-Hopf weak solutions in Definition \ref{lhdef} are the same as the weak solutions used in \cite[Chapter II, section 7]{FMRTbook}, even though they have slightly different formulations. Hence, according to inequality (A.39) in \cite[Chap. II]{FMRTbook}, we have for any Leray-Hopf weak solution $u(t)$ (in Definition \ref{lhdef}) that
\beq\label{iniener}
|u(t)|^2\le e^{-t}|u(0)|^2 + \int_0^t e^{-(t-\tau)} |f(\tau)|^2d\tau\quad \forall t>0.
\eeq

\subsection{Basic inequalities}
Below are some inequalities that will be needed in later estimates.
First, for any $\sigma,\alpha>0$, one has
\beq\label{mxe}
\max_{x\ge 0} (x^{\alpha}e^{-\sigma x})=d_0(\alpha,\sigma)\eqdef \Big(\frac{\alpha}{e\sigma }\Big)^{\alpha},
\eeq 
and, hence, 
\beq\label{mx2}
e^{-\sigma x}=e^{-\sigma (x+1)}e^\sigma\le d_0(\alpha,\sigma)e^\sigma (1+x)^{-\alpha}\quad \forall x\ge 0.
\eeq

Thanks to \eqref{mxe}, one can verify, for all $\alpha,\sigma>0$, that
\beq\label{als0}
|A^\alpha e^{-\sigma A}v| \le d_0(\alpha,\sigma) |v|\quad \forall v\in H,
\eeq
% \beq\label{als1}
% |A^\alpha v|=|(A^\alpha e^{-\sigma A})e^{\sigma A}v| \le d_0(\alpha,\sigma) |e^{\sigma A^{1/2}}v|\quad  \forall v\in G_{0,\sigma}.
% \eeq
\beqs
|A^\alpha e^{-\sigma A^{1/2}}v| \le d_0(2\alpha,\sigma) |v|\quad \forall v\in H,
\eeqs 
and, consequently, 
\beq 
\label{als}
|A^\alpha v|=|(A^\alpha e^{-\sigma A^{1/2}})e^{\sigma A^{1/2}}v|\le d_0(2\alpha,\sigma) |e^{\sigma A^{1/2}}v|\quad  \forall v\in G_{0,\sigma}.
\eeq

For the bi-linear mapping $B(u,v)$, it follows from its boundedness that there exists a constant $K_*>0$ such that 
\beq\label{Bweak}
\|B(u,v)\|_{V'}\le  K_* \| u\|\, \|v\|\quad\forall u,v\in V.
\eeq

For stronger norms of $B(u,v)$, we recall from \cite[Lemma 2.1]{HM1} a convenient inequality. (See Foias-Temam paper  \cite{FT-Gevrey} for the original version.)  

\medskip
\textit{There exists a constant $K>1$ such that if $\sigma\ge 0$ and $\alpha\ge 1/2$, then 
\beq\label{AalphaB} 
|B(u,v)|_{\alpha ,\sigma }\le K^\alpha  |u|_{\alpha +1/2,\sigma } \, |v|_{\alpha +1/2,\sigma}\quad \forall u,v\in G_{\alpha+1/2,\sigma}.
\eeq
}

\begin{lemma}\label{ineqlem}
Let $\sigma,\lambda>0$.   One has, for all $t\ge0$, that
 \beq \label{i2}
 \int_0^t \frac{e^{-\sigma (t-\tau)}}{(1+\tau)^\lambda}d\tau
 \le \frac{d_1(\lambda,\sigma)}{(1+t)^\lambda},
\eeq
where
\beqs%\label{Mhat}
d_1(\lambda,\sigma)\eqdef 2^\lambda (d_0(\lambda+1,\sigma)e^\sigma +\sigma^{-1})
= 2^\lambda\Big[\Big(\frac{\lambda+1}{e\sigma}\Big)^{\lambda+1}e^\sigma +\frac1\sigma\Big].
\eeqs
\end{lemma}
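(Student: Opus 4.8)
The plan is to split the integral at the midpoint $t/2$ and estimate each piece separately, exploiting that on one half the exponential factor $e^{-\sigma(t-\tau)}$ is small, while on the other half the algebraic factor $(1+\tau)^{-\lambda}$ is small. Write
\beqs
\int_0^t \frac{e^{-\sigma(t-\tau)}}{(1+\tau)^\lambda}\,d\tau
= \int_0^{t/2} \frac{e^{-\sigma(t-\tau)}}{(1+\tau)^\lambda}\,d\tau
+ \int_{t/2}^{t} \frac{e^{-\sigma(t-\tau)}}{(1+\tau)^\lambda}\,d\tau
\eqdef I_1 + I_2.
\eeqs

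For $I_1$: when $0\le \tau\le t/2$ we have $t-\tau\ge t/2$, and also $(1+\tau)^{-\lambda}\le 1$, so $I_1\le \int_0^{t/2} e^{-\sigma(t-\tau)}\,d\tau \le \int_{-\infty}^{t/2} e^{-\sigma(t-\tau)}\,d\tau = \sigma^{-1}e^{-\sigma t/2}$. Now I would convert the exponential decay $e^{-\sigma t/2}$ into the algebraic decay $(1+t)^{-\lambda}$ using \eqref{mx2} (with $x$ replaced by $t$, exponent $\lambda$, and decay rate $\sigma/2$): this gives $e^{-\sigma t/2}\le d_0(\lambda,\sigma/2)e^{\sigma/2}(1+t)^{-\lambda}$. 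However, to match the stated constant $d_1(\lambda,\sigma) = 2^\lambda(d_0(\lambda+1,\sigma)e^\sigma + \sigma^{-1})$, it is cleaner to instead bound $e^{-\sigma t/2} = e^{-\sigma(t/2+1)}e^{\sigma}\le \big[\sup_{x\ge 0}(1+x)^\lambda e^{-\sigma(x+1)}\big]e^{\sigma}(1+t/2+\text{(something)})^{-\lambda}$; one arranges the bookkeeping so that the factor $2^\lambda$ emerges from replacing $(1+t/2)^{-\lambda}$ or $(1+t)^{-\lambda}$-type quantities — concretely $1+t \le 2(1+t/2)$ gives $(1+t/2)^{-\lambda}\le 2^\lambda (1+t)^{-\lambda}$. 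The term $\sigma^{-1}$ in $d_1$ comes from the $\int_0^{t/2} e^{-\sigma(t-\tau)}d\tau \le \sigma^{-1} e^{-\sigma t/2}$ step after this conversion, and the precise power $\lambda+1$ inside $d_0(\lambda+1,\sigma)$ in the final constant will come from the standard trick $e^{-\sigma s} \le d_0(\lambda+1,\sigma)\, s^{-(\lambda+1)}$ applied with $s = t-\tau$ and then integrating $s^{-(\lambda+1)}$ over $\tau\in(0,t/2)$, i.e. $s\in(t/2,t)$, which produces $\frac{1}{\lambda}(t/2)^{-\lambda}$ — and absorbing the $1/\lambda$ and converting $(t/2)^{-\lambda}$ to $(1+t)^{-\lambda}$ yields the $2^\lambda d_0(\lambda+1,\sigma) e^\sigma$ contribution.

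For $I_2$: when $t/2\le \tau\le t$ we have $(1+\tau)^{-\lambda}\le (1+t/2)^{-\lambda}\le 2^\lambda(1+t)^{-\lambda}$, so pulling this out,
\beqs
I_2 \le \frac{2^\lambda}{(1+t)^\lambda}\int_{t/2}^{t} e^{-\sigma(t-\tau)}\,d\tau
\le \frac{2^\lambda}{(1+t)^\lambda}\int_0^\infty e^{-\sigma s}\,ds
= \frac{2^\lambda}{\sigma(1+t)^\lambda}.
\eeqs
Adding $I_1$ and $I_2$ and collecting the constants gives the bound with $d_1(\lambda,\sigma)$ as stated. I expect the only real care needed — the ``main obstacle'' in an otherwise routine argument — is the bookkeeping in $I_1$: choosing the right auxiliary exponent ($\lambda$ versus $\lambda+1$) in the elementary inequality \eqref{mx2}/\eqref{mxe} and tracking the dyadic factors so that the final constant is exactly $2^\lambda(d_0(\lambda+1,\sigma)e^\sigma + \sigma^{-1})$ rather than merely some constant of the same shape. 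One should also double-check the edge case $t=0$ (where both sides are the value of a trivial integral, namely $0\le d_1(\lambda,\sigma)$) and small $t$, where the split at $t/2$ is still valid and the estimates degrade gracefully.
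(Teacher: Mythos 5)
Your decomposition at $\tau=t/2$ and your treatment of $I_2$ are exactly the paper's; the issue is the bookkeeping for $I_1$, and it is a genuine gap if the lemma is to be proved with the stated constant $d_1(\lambda,\sigma)=2^\lambda\big(d_0(\lambda+1,\sigma)e^\sigma+\sigma^{-1}\big)$. Your clean route gives $I_1\le\sigma^{-1}e^{-\sigma t/2}$, and then \eqref{mx2} with exponent $\lambda$ yields the contribution $2^\lambda\sigma^{-1}d_0(\lambda,\sigma)e^\sigma(1+t)^{-\lambda}$. But $\sigma^{-1}d_0(\lambda,\sigma)\le d_0(\lambda+1,\sigma)$ is false in general (for $\lambda=0.1$, $\sigma=1$ one gets roughly $0.72$ versus $0.37$), so this proves the inequality only with a different, sometimes larger, constant. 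The alternative you sketch to recover the exact constant does not work either: bounding $e^{-\sigma s}\le d_0(\lambda+1,\sigma)s^{-(\lambda+1)}$ and integrating over $s\in(t/2,t)$ produces $(t/2)^{-\lambda}/\lambda$, which carries an uncontrolled factor $1/\lambda$, produces no $e^\sigma$, and, unlike $(1+t/2)^{-\lambda}$, is not bounded by a multiple of $(1+t)^{-\lambda}$ as $t\to0$.

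The paper's (simpler) move for $I_1$ is to \emph{not} integrate the exponential: bound the integrand by $e^{-\sigma t/2}$ and multiply by the interval length, giving $I_1\le\frac{t}{2}e^{-\sigma t/2}$; then apply \eqref{mx2} with exponent $\lambda+1$, so that $\frac{t}{2}e^{-\sigma t/2}\le\frac{t}{2}\cdot d_0(\lambda+1,\sigma)e^\sigma(1+t/2)^{-(\lambda+1)}\le d_0(\lambda+1,\sigma)e^\sigma(1+t/2)^{-\lambda}$, the extra power of $(1+t/2)$ absorbing the factor $t/2$. Combined with your (correct) $I_2\le\sigma^{-1}(1+t/2)^{-\lambda}$ and $(1+t/2)^{-\lambda}\le 2^\lambda(1+t)^{-\lambda}$, this lands exactly on $d_1(\lambda,\sigma)$. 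If you only needed the inequality with \emph{some} explicit constant of this shape, your version would suffice, but the exact value of $d_1$ enters $M_2$ and the smallness thresholds in Theorem \ref{theo22}, so you should either match it or adjust those definitions accordingly.
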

\begin{proof}
First, we have 
\begin{align*}
I&\eqdef   \int_0^ t\frac{e^{-\sigma(t- \tau)}}{(1+\tau)^\lambda} d\tau=  \int_0^{t/2}\frac{e^{-\sigma(t- \tau)}}{(1+\tau)^\lambda} d\tau + \int_{t/2}^ t\frac{e^{-\sigma(t- \tau)}}{(1+\tau)^\lambda} d\tau \\
&\le \int_0^{t/2} e^{-\sigma t/2} d\tau + \frac{1}{(1+t/2)^\lambda}\int_{t/2}^ t e^{-\sigma(t- \tau)}  d\tau
 \le \frac{t}{2}e^{-\sigma t/2} + \frac{1}{(1+t/2)^\lambda}\cdot \frac{1}{\sigma}.
\end{align*}

Note, by \eqref{mx2}, that 
$$
e^{-\sigma t/2}\le \frac{d_0(\lambda+1,\sigma)e^\sigma}{(1+t/2)^{\lambda+1}},
$$
which implies
\beqs
\frac{t}{2}e^{-\sigma t/2} \le \frac{C}{(1+t/2)^\lambda}, \text{ where }C=d_0(\lambda+1,\sigma)e^\sigma.
\eeqs

Thus, we obtain
\beqs
I \le \frac{1}{(1+t/2)^\lambda} \Big(C+\frac{1}{\sigma}\Big)\le   \frac{2^\lambda}{(1+t)^\lambda} \Big(C+\frac{1}{\sigma}\Big)
=\frac{d_1(\sigma,\lambda)}{(1+t)^\lambda},
\eeqs
which proves inequality \eqref{i2}.
 \end{proof}

\subsection{Large-time behavior of solutions of linearized NSE} 

We discuss the asymptotic behavior, in Sobolev-Gevrey spaces, of weak solutions of linearized NSE.

\begin{lemma}\label{odelem}
Let $\alpha,\sigma\ge 0$,  $\xi\in \mD(A^\alpha)$, and  $f$ be a function from $(0,\infty)$ to $G_{\alpha,\sigma}$ that satisfies 
  \beq\label{flem}
  |f(t)|_{\alpha,\sigma}\le M(1+t)^{-\lambda} \quad\text{a.e. in $(0,\infty)$ for some $M>0$.}\eeq 

  Suppose  
  \beq\label{wcond} w\in C([0,\infty),H_{\rm w})\cap L^1_{\rm loc}([0,\infty),V) , \text{ with }w'\in L^1_{\rm loc}([0,\infty),V'),
  \eeq 
  is a weak solution of 
 \beq\label{weq}
 w'=-Aw+\xi+f \text{ in $V'$ on $(0,\infty)$,}
  \eeq
  i.e., it holds, for all $v\in V$, that
  \beq\label{disweq}
 \ddt \inprod{w,v}=-\doubleinprod{w,v}+\inprod{\xi+f,v} \text{ in the distribution sense on $(0,\infty)$.}
  \eeq

Then, for any $\varep \in (0,1)$, there exists $C>0$ depending on $\varep$, $\lambda$, $M$, $|\xi|_{\alpha,\sigma}$ and $|w(0)|_{\alpha,\sigma}$ such that
\beq\label{yremain}
|w(t)-A^{-1}\xi|_{\alpha+1-\varep,\sigma}\le C(1+t)^{-\lambda} \quad \forall t\ge 1. 
\eeq
 \end{lemma}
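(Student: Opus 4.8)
The plan is to work componentwise in the eigenspaces of the Stokes operator, exploit the simple decoupled structure of the linear equation \eqref{weq}, and then reassemble the estimates using the Gevrey norm. First I would fix a basis adapted to $A$: for each $\lambda\in\sigma(A)$ write $w_\lambda(t)=R_\lambda w(t)$, $\xi_\lambda=R_\lambda\xi$, $f_\lambda(t)=R_\lambda f(t)$. Testing \eqref{disweq} against the (finitely many) eigenfunctions in the eigenspace of $\lambda$ shows that $w_\lambda$ solves the finite-dimensional linear ODE $w_\lambda'=-\lambda w_\lambda+\xi_\lambda+f_\lambda$ in $R_\lambda H$, with the regularity in \eqref{wcond} ensuring this holds in the classical (absolutely continuous) sense. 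Variation of constants then gives
\beqs
w_\lambda(t)=e^{-\lambda t}w_\lambda(0)+\frac{1-e^{-\lambda t}}{\lambda}\,\xi_\lambda+\int_0^t e^{-\lambda(t-\tau)}f_\lambda(\tau)\,d\tau,
\eeqs
so that, subtracting the stationary part $A^{-1}\xi$ whose $\lambda$-component is $\xi_\lambda/\lambda$,
\beqs
w_\lambda(t)-\lambda^{-1}\xi_\lambda=e^{-\lambda t}\Big(w_\lambda(0)-\lambda^{-1}\xi_\lambda\Big)+\int_0^t e^{-\lambda(t-\tau)}f_\lambda(\tau)\,d\tau.
\eeqs

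The next step is to estimate the $G_{\alpha+1-\varep,\sigma}$-norm of each term. For the first term I would use the smoothing bound \eqref{als0}: since $\lambda\ge 1$ on $\sigma(A)$ and the operator $A^{1-\varep}e^{-\sigma A^{1/2}}$ acts as multiplication by $|\veck|^{2(1-\varep)}e^{-\sigma|\veck|}$, which is bounded, the factor $e^{-\lambda t}$ dominates and gives a contribution $\mathcal O(e^{-t})$, hence $\mathcal O((1+t)^{-\lambda})$ for $t\ge 1$, with constant controlled by $|w(0)|_{\alpha,\sigma}$ and $|\xi|_{\alpha,\sigma}$ (using $|A^{-1}\xi|_{\alpha,\sigma}\le|\xi|_{\alpha,\sigma}$). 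For the integral term I would apply $A^{\alpha+1-\varep}e^{\sigma A^{1/2}}$ inside, write $A^{\alpha+1-\varep}e^{\sigma A^{1/2}}f_\lambda=A^{1-\varep}e^{-(\lambda_*-\lambda)\cdot 0}$—more carefully, bound $|A^{\alpha+1-\varep}e^{\sigma A^{1/2}}e^{-\lambda(t-\tau)}f_\lambda(\tau)|$. The key observation is that on the $\lambda$-eigenspace, $A^{1-\varep}e^{-\lambda(t-\tau)}$ has norm $\lambda^{1-\varep}e^{-\lambda(t-\tau)}$, and by \eqref{mxe} with exponent $1-\varep$ this is $\le d_0(1-\varep,\,1)\,(t-\tau)^{-(1-\varep)}$ uniformly in $\lambda\ge 1$ — this is precisely where the gain of $1-\varep$ derivatives comes from and where $\varep>0$ is needed. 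Summing over $\lambda$ (orthogonality of the $R_\lambda$) yields
\beqs
|w(t)-A^{-1}\xi|_{\alpha+1-\varep,\sigma}\le Ce^{-t}\big(|w(0)|_{\alpha,\sigma}+|\xi|_{\alpha,\sigma}\big)+d_0(1-\varep,1)\int_0^t (t-\tau)^{-(1-\varep)}|f(\tau)|_{\alpha,\sigma}\,d\tau.
\eeqs

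Finally I would bound the remaining convolution integral using hypothesis \eqref{flem} and a Lemma \ref{ineqlem}-type estimate adapted to the weakly singular kernel $(t-\tau)^{-(1-\varep)}$: splitting $\int_0^t=\int_0^{t/2}+\int_{t/2}^t$, on $[0,t/2]$ one has $(t-\tau)^{-(1-\varep)}\le (t/2)^{-(1-\varep)}$ and $\int_0^{t/2}(1+\tau)^{-\lambda}d\tau$ is $\mathcal O(1)$ or $\mathcal O((1+t)^{1-\lambda})$, giving overall $\mathcal O((1+t)^{-\lambda})$ for $t\ge 1$; on $[t/2,t]$ one has $(1+\tau)^{-\lambda}\le (1+t/2)^{-\lambda}$ and $\int_{t/2}^t(t-\tau)^{-(1-\varep)}d\tau=\varep^{-1}(t/2)^{\varep}$, and combined with $(1+t/2)^{-\lambda}\lesssim (1+t)^{-\lambda-\varep}\cdot(\text{const})$ via \eqref{mx2}—more directly, $(t/2)^\varepsilon(1+t/2)^{-\lambda}$ needs to be compared to $(1+t)^{-\lambda}$, which forces absorbing the extra $t^\varepsilon$ using a slightly larger decay margin; since the hypothesis only gives the single exponent $\lambda$, the honest bound here is $(t/2)^\varepsilon(1+t/2)^{-\lambda}\le C(1+t)^{-\lambda+\varepsilon}$, and one then notes $-\lambda+\varepsilon\le -\lambda$ fails—so instead I would keep $\varepsilon$ small and observe the claimed bound \eqref{yremain} has exponent exactly $\lambda$ on the left, meaning the $[t/2,t]$ piece must be handled by instead writing $(t-\tau)^{-(1-\varepsilon)}\le (1+(t-\tau))^{-(1-\varepsilon)}$ is false for small $t-\tau$; the correct route is to not extract the kernel sup but apply \eqref{i2} directly after noting $\int_0^t(t-\tau)^{-(1-\varepsilon)}(1+\tau)^{-\lambda}d\tau\le C_{\varepsilon,\lambda}(1+t)^{-\lambda}$ by the same two-region argument with $\int_{t/2}^t(t-\tau)^{-(1-\varepsilon)}d\tau\le \varepsilon^{-1}t^{\varepsilon}$ absorbed into $(1+t/2)^{-\lambda}=(1+t/2)^{-(\lambda-\varepsilon)}(1+t/2)^{-\varepsilon}\le 2^{\lambda}t^{-\varepsilon}(1+t)^{-(\lambda-\varepsilon)}$. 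I expect this last kernel estimate — proving $\int_0^t(t-\tau)^{-(1-\varepsilon)}(1+\tau)^{-\lambda}\,d\tau=\mathcal O((1+t)^{-\lambda})$ with the singular kernel — to be the main technical obstacle and the one requiring the most care; everything upstream (the componentwise ODE solution, orthogonality, the uniform-in-$\lambda$ smoothing bound $\lambda^{1-\varepsilon}e^{-\lambda s}\le d_0(1-\varepsilon,1)s^{-(1-\varepsilon)}$) is routine given the tools in the Preliminaries.
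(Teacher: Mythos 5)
Your overall strategy—spectral decomposition, variation of constants on each eigenspace, and a smoothing bound of the form $\mu^{1-\varep}e^{-\mu s}\le d_0(1-\varep,1)\,s^{-(1-\varep)}$ uniformly over eigenvalues $\mu$—is the same as the paper's (the paper works with the finite-dimensional projections $P_N$ rather than individual $R_\mu$, but that is immaterial). The gap is in the Duhamel term, and it is exactly the step you flag as the "main technical obstacle": the kernel estimate you hope to prove there is false. By discarding \emph{all} of the semigroup's exponential decay when you convert $A^{1-\varep}e^{-(t-\tau)A}$ into the purely algebraic kernel $(t-\tau)^{-(1-\varep)}$, you are left with
\begin{equation*}
\int_0^t (t-\tau)^{-(1-\varep)}(1+\tau)^{-\lambda}\,d\tau ,
\end{equation*}
and this integral is genuinely of order $t^{-(\lambda-\varep)}$, not $t^{-\lambda}$: on $[t/2,t]$ the factor $(1+\tau)^{-\lambda}$ is comparable to $(1+t)^{-\lambda}$ throughout, while $\int_{t/2}^t(t-\tau)^{-(1-\varep)}d\tau=\varep^{-1}(t/2)^{\varep}$ grows, so the contribution is bounded \emph{below} by $c\,t^{\varep}(1+t)^{-\lambda}$. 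Your closing manipulations confirm this—they only ever reach $(1+t)^{-(\lambda-\varep)}$—so the argument as written does not deliver the rate $(1+t)^{-\lambda}$ claimed in \eqref{yremain}.

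The repair, which is what the paper does, is to spend only half of the exponential on smoothing: write $e^{-(t-\tau)A}=e^{-(t-\tau)A/2}e^{-(t-\tau)A/2}$, use \eqref{als0} on one factor to get $\|A^{1-\varep}e^{-(t-\tau)A/2}\|\le \big[\tfrac{2(1-\varep)}{e(t-\tau)}\big]^{1-\varep}$, and keep $e^{-(t-\tau)/2}$ from the other factor (valid since the spectrum of $A$ starts at $1$). The resulting kernel $e^{-z/2}z^{-(1-\varep)}$ is integrable on $(0,\infty)$, so the near-diagonal piece $\int_{t/2}^t$ contributes only a constant (depending on $\varep$) times $\sup_{\tau\ge t/2}(1+\tau)^{-\lambda}\le 2^{\lambda}(1+t)^{-\lambda}$, with no loss of $t^{\varep}$; the far piece $\int_0^{t/2}$ carries a spare factor $e^{-tA/4}$ and is handled by Lemma \ref{ineqlem}. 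With that modification the rest of your outline (the first two terms, the summation over eigenspaces, and the passage to the limit) goes through and matches the paper's proof.
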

 \begin{proof}
Let $N\in\sigma(A)$, and set $A_N=A|_{P_N H}$ which is an invertible linear map from $P_N H$ onto itself.
By taking $v\in P_NH$ in equation \eqref{disweq}, we deduce that $P_N w$ solves, in the $P_NH$-valued distribution sense on $(0,\infty)$, the equation
\beq\label{vy}
\ddt (P_Nw)=-A_N (P_Nw) +P_N\xi +P_Nf\quad \text{in  $P_N H$ on $(0,\infty)$.}
\eeq

Since $P_N H$ is a finite-dimensional Euclidean space, one has $P_Nw\in C([0,\infty),P_NH)$ and $(P_N w)'\in L^1_{\rm loc}([0,\infty),P_N H)$. Then the variation of constants formula still holds true for the solution $P_N w$ of \eqref{vy}. (See, for example, the arguments in \cite[Lemma 4.2]{HM2}.) 
By denoting $w_0=w(0)$, we have
\begin{align*}
  P_N w(t)&=e^{-tA_N}P_N w_0+\int_0^t e^{-(t-\tau)A_N}(P_N\xi+P_Nf(\tau))d\tau\\
  &=e^{-tA_N}P_N w_0+A_N^{-1}(P_N \xi-e^{-tA_N}P_N \xi)+\int_0^t e^{-(t-\tau)A_N}P_Nf(\tau)d\tau\\
  &=e^{-tA}P_N w_0+A^{-1}(P_N \xi-e^{-tA}P_N \xi)+\int_0^t e^{-(t-\tau)A}P_Nf(\tau)d\tau,
\end{align*}
  which yields
\beq\label{yform}
  P_N\big (w(t)-A^{-1}\xi\big )=e^{-tA}P_N w_0-A^{-1}e^{-tA}P_N \xi+\int_0^t e^{-(t-\tau)A}P_N f(\tau)d\tau.
\eeq
  
Let $\varep\in (0,1)$. 
%Examing inequality \eqref{yremain}, we also assume, without loss of generality, that $\varep<\lambda$.
Applying $A^{1-\varep}$ to both sides of \eqref{yform}, and estimating the $|\cdot|_{\alpha,\sigma}$ norm of the resulting quantities,   we obtain
\beq \label{ynorm}
  \begin{aligned}
  |P_N(w(t)-A^{-1}\xi)|_{\alpha+1-\varep,\sigma}
  &\le |A^{1-\varep}e^{-tA}w_0|_{\alpha,\sigma}+|A^{-\varep}e^{-tA}\xi|_{\alpha,\sigma}\\
  &\quad +\int_0^t |e^{-(t-\tau)A}A^{1-\varep} f(\tau)|_{\alpha,\sigma}d\tau.
    \end{aligned}
\eeq  

  We find bounds for each term on the right-hand side of the preceding inequality.
  
  \medskip
$\bullet$ Firstly, for $t\ge 1$, rewriting the first term on the right-hand side of \eqref{ynorm} and applying \eqref{als0} yield
  \begin{align*}
   |A^{1-\varep}e^{-tA}w_0|_{\alpha,\sigma}
   &= |A^{1-\varep}e^{-tA/2} e^{-tA/2} w_0|_{\alpha,\sigma}
   \le \Big[ \frac{(1-\varep)}{et/2}\Big]^{1-\varep}|e^{-tA/2} w_0|_{\alpha,\sigma}\\
   &\le \Big[ \frac{2(1-\varep)}{e}\Big]^{1-\varep} e^{-t/2} |w_0|_{\alpha,\sigma}. 
  \end{align*}
To compare $e^{-t/2}$ and $(1+t)^{-\lambda}$, we apply \eqref{mx2} to obtain
\beq\label{y1}
   |A^{1-\varep}e^{-tA}w_0|_{\alpha,\sigma}
 \le \Big[ \frac{2(1-\varep)}{e}\Big]^{1-\varep} \frac{d_0(\lambda,1/2)e^{1/2}}{(1+t)^{\lambda}}|w_0|_{\alpha,\sigma}.
\eeq

  \medskip
$\bullet$ 
 Secondly, the second term on the right-hand side of \eqref{ynorm} can be easily estimated by
  \beq\label{y2}
   |A^{-\varep}e^{-tA}\xi|_{\alpha,\sigma}
\le  | e^{-tA} \xi|_{\alpha,\sigma}
   \le e^{-t} |\xi|_{\alpha,\sigma}\le  \frac{d_0(\lambda,1)e}{(1+t)^{\lambda}}|\xi|_{\alpha,\sigma}. 
\eeq

  \medskip
$\bullet$ Thirdly, dealing with  the last integral in \eqref{ynorm}, we split it into two integrals
\beq\label{y3}
\int_0^t |e^{-(t-\tau)A}A^{1-\varep}f(\tau)|_{\alpha,\sigma}d\tau=I_1+I_2,
\eeq
where 
\beqs
I_1= \int_0^{t/2} |e^{-(t-\tau)A}A^{1-\varep}f(\tau)|_{\alpha,\sigma}d\tau,\quad 
I_2= \int_{t/2}^t |e^{-(t-\tau)A}A^{1-\varep}f(\tau)|_{\alpha,\sigma}d\tau.
\eeqs
%, one from $0$ to $t/2$ and the other from $t/2$ to $t$, and denote them by $I_1$ and, respectively, $I_2$.
  
  \medskip
For $I_1$, we have for $t\ge 1$ that
\begin{align*}
 I_1&= \int_0^{t/2} \Big|e^{-(t-\tau)A/2}\Big( e^{-(t-\tau)A/2} A^{1-\varep} f(\tau)\Big)\Big|_{\alpha,\sigma}d\tau\\
 &\le \int_0^{t/2} \Big|e^{-(t/4) A}A^{1-\varep} e^{-(t-\tau)A/2}f(\tau)\Big|_{\alpha,\sigma}d\tau.
\end{align*}

Utilizing \eqref{als0} and then using hypothesis \eqref{flem}, we obtain
\begin{align*}
 I_1&\le \int_0^{t/2} \Big [\frac{1-\varep}{et/4}\Big ]^{1-\varep}|e^{-(t-\tau)A/2}f(\tau)|_{\alpha,\sigma}d\tau\\
  &\le \Big [\frac{1-\varep}{et/4}\Big ]^{1-\varep} \int_0^{t/2}  e^{-(t-\tau)/2} M(1+\tau)^{-\lambda}d\tau\\
    &= M\Big [\frac{1-\varep}{et/4}\Big ]^{1-\varep} e^{\frac{-t}{4}}\int_0^{t/2}  e^{-(t/2-\tau)/2} (1+\tau)^{-\lambda}d\tau.
\end{align*}

Then by Lemma \ref{ineqlem}
\begin{align*}
 I_1
 &\le  M \Big [\frac{4(1-\varep)}{e t}\Big ]^{1-\varep} e^{-t/4} \frac{d_1(\lambda,1/2)}{(1+t/2)^{\lambda}}
 \le  M \Big [\frac{4(1-\varep)}{e }\Big ]^{1-\varep} \frac{e^{-t/4}}{t^{1-\varep}} \frac{2^\lambda d_1(\lambda,1/2)}{(1+t)^{\lambda}}.\\
%& \le  \Big [\frac{4(1-\varep)}{e }\Big ]^{1-\varep} \frac{e^{-t/4}}{t^{1-\varep}} \frac{2^\lambda d_1(\lambda,1/2)}{(1+t)^{\lambda}}\\
% &\le  \Big [\frac{4(1-\varep)}{e }\Big ]^{1-\varep}2^\lambda d_1(1/2,\lambda)  \frac{t^\varep}{(1+t)^\lambda}.
\end{align*}
Thus, for $t\ge 1$
\beq\label{y31}
I_1\le   M \Big [\frac{4(1-\varep)}{e }\Big ]^{1-\varep} e^{-1/4} 2^\lambda d_1(\lambda,1/2) \frac{1}{(1+t)^{\lambda}}.
\eeq

\medskip
%For $I_2$, we apply \eqref{als0} and use \eqref{flem} to have
%\begin{align*}
%I_2
%&\le \int_{t/2}^t \Big[\frac{1-\varep}{e(t-\tau)}\Big]^{1-\varep} |f(\tau)|_{\alpha,\sigma}d\tau 
%\le  \Big[\frac{1-\varep}{e}\Big]^{1-\varep} \frac{M}{(1+t/2)^\lambda}  \int_{t/2}^t \frac{1}{(t-\tau)^{1-\varep}}d\tau\\
%&=\Big[\frac{1-\varep}{e}\Big]^{1-\varep} \frac{M}{(1+t/2)^\lambda}  \cdot  \frac1\varep (t/2)^\varep.
%\end{align*}
%Hence,
%\beq\label{y32}
%I_2\le \Big[\frac{1-\varep}{e}\Big]^{1-\varep}\frac{2^{\lambda-\varep}M }{\varep}\cdot \frac{1}{(1+t)^{\lambda-\varep}}. 
%\eeq

For $I_2$, we apply \eqref{als0} and use \eqref{flem} to estimates its integrand, for $t/2<\tau<t$, by
\begin{align*}
&|e^{-(t-\tau)A}A^{1-\varep}f(\tau)|_{\alpha,\sigma}
\le  e^{-(t-\tau)/2} |e^{-(t-\tau)A/2}A^{1-\varep}f(\tau)|_{\alpha,\sigma}\\
&\quad \le  e^{-(t-\tau)/2} \Big[\frac{1-\varep}{e(t-\tau)/2}\Big]^{1-\varep} |f(\tau)|_{\alpha,\sigma} 
\le  \Big[\frac{2(1-\varep)}{e}\Big]^{1-\varep} \frac{e^{-(t-\tau)/2}}{(t-\tau)^{1-\varep}}\cdot  \frac{M}{(1+\tau)^\lambda} \\
&\quad \le  \Big[\frac{2(1-\varep)}{e}\Big]^{1-\varep}\frac{M}{(1+t/2)^\lambda} \cdot \frac{e^{-(t-\tau)/2}}{(t-\tau)^{1-\varep}}.
\end{align*}
Hence,
\begin{align*}
I_2
&\le  \Big[\frac{2(1-\varep)}{e}\Big]^{1-\varep}\frac{M}{(1+t/2)^\lambda} \int_{t/2}^t \frac{e^{-(t-\tau)/2}}{(t-\tau)^{1-\varep}}d\tau
\le \Big[\frac{2(1-\varep)}{e}\Big]^{1-\varep}\frac{2^\lambda M}{(1+t)^\lambda} \int_0^{t/2} \frac{e^{-z/2}}{z^{1-\varep}}dz.
\end{align*}
We estimate the last integral by
\begin{align*}
 \int_0^{t/2} \frac{e^{-z/2}}{z^{1-\varep}}dz
 &=\int_0^{1/2} \frac{e^{-z/2}}{z^{1-\varep}}dz+\int_{1/2}^{t/2} \frac{e^{-z/2}}{z^{1-\varep}}dz
\le \int_0^{1/2} \frac{1}{z^{1-\varep}}dz+ 2^{1-\varep}\int_{1/2}^{t/2} e^{-z/2}dz\\
&\le 2^{-\varep}\varep^{-1}+ 2^{2-\varep} e^{-1/4} =2^{-\varep}(\varep^{-1}+ 4 e^{-1/4}) .
\end{align*}
Therefore,
\beq\label{y32}
I_2\le M\Big[\frac{1-\varep}{e}\Big]^{1-\varep}2^{1-2\varep+\lambda} ( \varep^{-1}+ 4 e^{-1/4} )\cdot \frac{1}{(1+t)^{\lambda}}. 
\eeq

Combining \eqref{ynorm}--\eqref{y32}, we obtain 
\beq\label{yNrem}
|P_N\big( w(t)-A^{-1}\xi\big)|_{\alpha+1-\varep,\sigma}\le C(1+t)^{-\lambda} \quad \forall t\ge 1,
\eeq
with constant $C$ independent of $N$.
Since  $A^{-1}\xi$ belongs to $G_{\alpha+1-\varep,\sigma}$, this bound shows that $w(t)$ also belongs to $G_{\alpha+1-\varep,\sigma}$.
By passing $N\to\infty$ in \eqref{yNrem}, we obtain \eqref{yremain}.
The proof is complete.
\end{proof}

The particular case $\xi=0$ has a special meaning, and we state the result separately here.

\begin{lemma}\label{linearNSE}
Let $\alpha,\sigma\ge 0$,  and  suppose $f$ is a function in $L^\infty_{\rm loc}([0,\infty),G_{\alpha,\sigma})$. 
Let $w$ satisfy \eqref{wcond} and be a weak solution of 
 \beqs%\label{wNSE}
 w'=-Aw+f \text{ in $V'$ on $(0,\infty)$.}
  \eeqs
  
  \begin{enumerate}[label=\rm (\roman*)]
   \item Then $w(t)\in G_{\alpha+1-\varep,\sigma}$ for all $\varep\in(0,1)$ and  $t>0$.

\item If, in addition, $f$ satisfies \eqref{flem}, then, for any $\varep \in (0,1)$, there exists $C>0$ depending on $\varep$, $\lambda$, $M$ and $|w(0)|_{\alpha,\sigma}$ such that
\beq\label{wremain}
|w(t)|_{\alpha+1-\varep,\sigma}\le C(1+t)^{-\lambda} \quad \forall t\ge 1. 
\eeq
  \end{enumerate}
 \end{lemma}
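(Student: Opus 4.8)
\textbf{Proof proposal for Lemma~\ref{linearNSE}.}
The plan is to derive both statements directly from Lemma~\ref{odelem} by specializing $\xi = 0$ and taking the time origin at an arbitrary positive time. For part~(ii), this is immediate: with $\xi = 0$ the conclusion \eqref{yremain} of Lemma~\ref{odelem} reads $|w(t)|_{\alpha+1-\varep,\sigma} \le C(1+t)^{-\lambda}$ for all $t \ge 1$, where $C$ now depends only on $\varep$, $\lambda$, $M$ and $|w(0)|_{\alpha,\sigma}$ (the dependence on $|\xi|_{\alpha,\sigma}$ having disappeared). So part~(ii) is essentially just a restatement of Lemma~\ref{odelem} in the case $\xi = 0$; the only thing to check is that the hypothesis \eqref{wcond} on $w$ and the hypothesis on $f$ (here $f \in L^\infty_{\rm loc}([0,\infty),G_{\alpha,\sigma})$ together with the decay \eqref{flem}) are exactly those required by Lemma~\ref{odelem}, which they are, since $L^\infty_{\rm loc}$ decay of the stated form certainly gives $|f(t)|_{\alpha,\sigma} \le M(1+t)^{-\lambda}$ a.e.

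For part~(i), where no decay of $f$ is assumed, the plan is a localization-in-time argument. Fix $\varep \in (0,1)$ and a time $t_0 > 0$; I want to show $w(t_0) \in G_{\alpha+1-\varep,\sigma}$. Choose any $s_0 \in (0, t_0)$ and consider $w$ restricted to $[s_0,\infty)$, viewed as a weak solution of $w' = -Aw + f$ with initial time $s_0$. Since $f \in L^\infty_{\rm loc}$, on the compact interval $[s_0, t_0]$ we have a uniform bound $|f(\tau)|_{\alpha,\sigma} \le M$, hence trivially $|f(\tau)|_{\alpha,\sigma} \le M(1 + (\tau - s_0))^{-\lambda} \cdot (1+(t_0-s_0))^{\lambda}$ on that interval; more simply, one runs the Galerkin/projection estimate of Lemma~\ref{odelem} (with $\xi=0$, $\lambda = 0$, and time origin $s_0$) to get, for $N \in \sigma(A)$,
\beqs
|P_N w(t)|_{\alpha+1-\varep,\sigma} \le |A^{1-\varep}e^{-(t-s_0)A}P_N w(s_0)|_{\alpha,\sigma} + \int_{s_0}^t |e^{-(t-\tau)A}A^{1-\varep}P_N f(\tau)|_{\alpha,\sigma}\,d\tau.
\eeqs
The first term is bounded by $[(1-\varep)/(e(t-s_0))]^{1-\varep}|w(s_0)|_{\alpha,\sigma}$ via \eqref{als0}, and the integral is bounded, using \eqref{als0} again, by $M\int_{s_0}^t (t-\tau)^{-(1-\varep)}e^{-(t-\tau)}\,d\tau \le M\int_0^\infty z^{-(1-\varep)}e^{-z}\,dz = M\,\Gamma(\varep)$, a finite constant since $\varep > 0$. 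These bounds are uniform in $N$, so letting $N \to \infty$ shows $w(t) \in G_{\alpha+1-\varep,\sigma}$ for every $t > s_0$, and since $s_0 \in (0,t_0)$ was arbitrary, for every $t > 0$.

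The only genuinely delicate point — and the one I would be careful about — is the same one flagged in the proof of Lemma~\ref{odelem}: justifying the variation-of-constants representation for the finite-dimensional projections $P_N w$ despite $w$ only being a weak solution with $w' \in L^1_{\rm loc}([0,\infty),V')$. This is handled exactly as there: testing \eqref{disweq} (with $\xi=0$) against $v \in P_N H$ shows $P_N w$ solves an ODE in the finite-dimensional Euclidean space $P_N H$ with $L^1_{\rm loc}$ right-hand side, so $P_N w \in C([0,\infty),P_N H)$ with $(P_N w)' \in L^1_{\rm loc}$, and the Duhamel formula holds (cf.\ \cite[Lemma~4.2]{HM2}). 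Everything else is a routine repetition of the three-term estimate in Lemma~\ref{odelem}; in particular part~(ii) requires no new work at all beyond observing that the $\xi$-dependent terms vanish. I would therefore write the proof as: "(ii) is the case $\xi = 0$ of Lemma~\ref{odelem}. For (i), apply the argument of Lemma~\ref{odelem} on $[s_0,\infty)$ with $\xi=0$ and $\lambda=0$, using the local boundedness of $f$, to get the claimed regularity at all $t > s_0 > 0$."
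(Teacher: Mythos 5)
Your proposal follows the paper's own route almost verbatim: part (ii) is indeed nothing more than Lemma \ref{odelem} with $\xi=0$, and part (i) is the same two-term Duhamel estimate for $P_N w$ with the integral term controlled by the local $L^\infty$ bound on $f$. (The paper's version of your $M\,\Gamma(\varep)$ bound is the even cruder $[(1-\varep)/e]^{1-\varep}M_0\varep^{-1}t^{\varep}$, obtained by dropping the exponential factor entirely; both are fine.)

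The one step that does not survive scrutiny is the localization at an interior time $s_0\in(0,t_0)$ in part (i). Your bound for the first Duhamel term, $|A^{1-\varep}e^{-(t-s_0)A}P_N w(s_0)|_{\alpha,\sigma}\le [(1-\varep)/(e(t-s_0))]^{1-\varep}\,|w(s_0)|_{\alpha,\sigma}$, is only useful in the limit $N\to\infty$ if $|w(s_0)|_{\alpha,\sigma}<\infty$. But the hypotheses \eqref{wcond} only give $w(s_0)\in H$ (and $w(s_0)\in V$ for a.e.\ $s_0$), so for $\alpha>1/2$ or $\sigma>0$ the membership $w(s_0)\in G_{\alpha,\sigma}$ at a positive time is not known in advance --- it is, in weakened form, precisely the kind of statement part (i) is trying to establish, so the argument as written is mildly circular. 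The repair is immediate: take $s_0=0$, where $|w(0)|_{\alpha,\sigma}<\infty$ is the implicit standing assumption that Lemma \ref{odelem} and part (ii) already rely on (their constants depend on $|w(0)|_{\alpha,\sigma}$); the resulting singular factor $t^{\varep-1}|w(0)|_{\alpha,\sigma}$ is finite for every $t>0$, which is all that is needed, and this is exactly what the paper does. (Alternatively one could keep $s_0>0$ by invoking the smoothing of $e^{-sA}$ from $H$ into every Gevrey class, i.e.\ bounding the first term by $\sup_{\veck\ne 0}\big(|\veck|^{2(\alpha+1-\varep)}e^{\sigma|\veck|}e^{-(t-s_0)|\veck|^2}\big)\,|w(s_0)|$, but that is not the estimate you wrote.) With $s_0=0$ your argument coincides with the paper's.
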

\begin{proof}
We set $\xi=0$ in \eqref{weq} and follow the proof of Lemma \ref{odelem}. In this case, \eqref{ynorm} reads, for all $t>0$, as
\beq \label{wnorm}
  |P_Nw(t)|_{\alpha+1-\varep,\sigma}
\le |A^{1-\varep}e^{-tA}w_0|_{\alpha,\sigma}
 +\int_0^t |e^{-(t-\tau)A}A^{1-\varep} f(\tau)|_{\alpha,\sigma}d\tau.
\eeq

(i) Let $T>0$. There is $M_0>0$ such that $|f(t)|_{\alpha,\sigma}\le M_0$ a.e. in $(0,T)$.
For $t\in(0,T)$, we use \eqref{als0} to estimate 
\beqs
|A^{1-\varep}e^{-tA}w_0|_{\alpha,\sigma}\le \Big[\frac{1-\varep}{et}\Big]^{1-\varep} |w_0|_{\alpha,\sigma},
\eeqs
\begin{align*}
& \int_0^t |e^{-(t-\tau)A}A^{1-\varep} f(\tau)|_{\alpha,\sigma}d\tau
\le \Big[\frac{1-\varep}{e}\Big]^{1-\varep} \int_0^t \frac{|f(\tau)|_{\alpha,\sigma}}{(t-\tau)^{1-\varep}}d\tau\\
&\quad \le \Big[\frac{1-\varep}{e}\Big]^{1-\varep}  \int_0^t \frac{M_0}{(t-\tau)^{1-\varep}}d\tau 
= \Big[\frac{1-\varep}{e}\Big]^{1-\varep} M_0 \varep^{-1}t^\varep .
\end{align*}
Utilizing these estimates, we can pass $N\to\infty$ in \eqref{wnorm}, and obtain 
$$|w(t)|_{\alpha+1-\varep,\sigma}
\le \Big[\frac{1-\varep}{e}\Big]^{1-\varep}\Big( t^{\varep-1} |w_0|_{\alpha,\sigma}+M_0 \varep^{-1}t^\varep\Big),$$ 
thus, $w(t)\in G_{\alpha+1-\varep,\sigma}$.

(ii) This part is the same as Lemma \ref{odelem}, and \eqref{wremain} follows \eqref{yremain}.
\end{proof}

\section{Asymptotic estimates for the Leray-Hopf weak solutions}\label{Gdecay}
%%%%%%%%%%%%%%%%%%%%%%%%%%%%%%%%%%%%%%%%%%%%%%%%%%%%%%%%%%

The goal of this section is to establish the power-decay for any Leray-Hopf weak solutions whenever the force is power-decaying.
The first theorem concerns the Gevrey estimates for the solutions for positive time when the initial data is small in a Sobolev norm, and the force is small in a Gevrey norm.

\begin{theorem} \label{theo22}
Let $\lambda>0$, $\sigma\ge 0$, and $\alpha\ge1/2$ be given numbers.
Suppose
\begin{align}\label{usmall}
|A^\alpha u^0|&\le c_0,\\
\label{fta}
|f(t)|_{\alpha-1/2,\sigma}&\le c_1 (1+t)^{-\lambda}\quad\text{a.e. in } (0,\infty),
\end{align} 
where
\beq\label{c01}
c_0=c_0(\alpha,\lambda)\eqdef \frac{c_*}{\max\{1,\sqrt{M_1}\}}
\quad \text{and}\quad 
c_1= c_1(\alpha,\lambda)\eqdef \frac{c_*}{\sqrt{3  M_2}},
\eeq 
with 
\beqs %\label{M_1}
c_*=c_{*,\alpha}\eqdef \frac1{12K^\alpha},
\quad 
M_1=M_{1,\lambda}\eqdef d_0(2\lambda,1)e,
\quad M_2=M_{2,\lambda}\eqdef d_1(2\lambda,1).
\eeqs

Then there exists a unique global strong solution $u(t)$ of \eqref{fctnse} and \eqref{uzero}, which, furthermore, satisfies 
$u\in C([0,\infty),\mathcal D(A^\alpha))$ and 
\begin{align}\label{uest}
|u(t)|_{\alpha,\sigma}&\le \sqrt{2}c_* (1+t)^{-\lambda}\quad \forall t\ge t_*,\\
\label{intAa}
 \int_t^{t+1} |u(\tau)|_{\alpha+1/2,\sigma}^2d\tau
 &\le  2c_*^2 \Big (1 +\frac{1}{ 2M_2}\Big) (1+t)^{-2\lambda}\quad \forall t\ge t_*,
 \end{align}
 where $t_*=12\sigma$.
\end{theorem}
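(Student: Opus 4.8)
The plan is to run a standard Galerkin approximation together with an a priori estimate in the Gevrey norm $|\cdot|_{\alpha,\sigma}$, closed via a barrier (differential-inequality) argument. First I would apply $A^\alpha e^{\sigma A^{1/2}}$ (or work with the Galerkin projections $P_N$ and then pass to the limit) and take the inner product of the equation \eqref{fctnse} with $A^{2\alpha}e^{2\sigma A^{1/2}}u(t)$. The linear term $Au$ gives $+|u(t)|_{\alpha+1/2,\sigma}^2$; for the Gevrey norm one must also account for the time-dependence of the multiplier $e^{\sigma A^{1/2}}$, but since $\sigma$ is a fixed constant here (not $\sigma + t$ as in Foias--Temam), there is no extra term and the computation is cleaner. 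The forcing term is bounded by $|f(t)|_{\alpha,\sigma}|u(t)|_{\alpha,\sigma}$, and then $|f(t)|_{\alpha,\sigma}\le |A^{1/2}(A^{\alpha-1/2}e^{\sigma A^{1/2}}f)|$ is controlled using $|f(t)|_{\alpha-1/2,\sigma}$ together with $|u(t)|_{\alpha+1/2,\sigma}$ via Young's inequality (this is why the hypothesis is stated at level $\alpha-1/2$). The nonlinear term is estimated by \eqref{AalphaB}: $|\langle B(u,u), A^{2\alpha}e^{2\sigma A^{1/2}}u\rangle| \le |B(u,u)|_{\alpha,\sigma}|u|_{\alpha,\sigma} \le K^\alpha |u|_{\alpha+1/2,\sigma}^2 |u|_{\alpha,\sigma}$.

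Writing $y(t) = |u(t)|_{\alpha,\sigma}^2$ and $z(t)=|u(t)|_{\alpha+1/2,\sigma}^2$, the above yields a differential inequality of the schematic form
\beqs
\frac12 y'(t) + z(t) \le K^\alpha z(t) \sqrt{y(t)} + C z(t) + \text{(absorbable forcing terms involving } c_1(1+t)^{-\lambda}).
\eeqs
The strategy is then a continuity/bootstrap argument: assuming $\sqrt{y(t)} \le 2c_* = \tfrac{1}{6K^\alpha}$ on a maximal interval, the term $K^\alpha z\sqrt y \le \tfrac16 z$ is absorbed into $z(t)$ on the left, and after also absorbing the forcing cross term we are left with $\tfrac12 y' + c\, z \le (\text{const})\, c_1^2 (1+t)^{-2\lambda}$ for some $c>0$; using $z \ge \lambda_1 y = y$ (first eigenvalue is $1$) gives $y' + c\, y \le C c_1^2(1+t)^{-2\lambda}$. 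Integrating this linear ODE with the help of Lemma \ref{ineqlem} (with $\lambda \rightsquigarrow 2\lambda$, $\sigma \rightsquigarrow$ the decay rate $c$, noting $c\ge 1$ can be arranged) and the bound $y(0)\le c_0^2$, together with the definitions of $M_1, M_2$ and $c_0, c_1$, one shows $y(t) \le 2c_*^2 (1+t)^{-2\lambda}$ for all $t\ge t_* = 12\sigma$, which is strictly better than the bootstrap hypothesis $\sqrt y \le 2c_*$ once $t\ge t_*$; hence the maximal interval is all of $[0,\infty)$ and \eqref{uest} holds. The role of $t_* = 12\sigma$ is to dominate the transient $e^{\sigma A^{1/2}}$-induced growth near $t=0$ (the factors $e^\sigma$, $e^{1/2}$, etc. appearing in $d_0, d_1$); one checks that for $t\ge t_*$ the initial-data contribution $e^{-t}c_0^2$ and the Lemma \ref{ineqlem} constants combine to give exactly the stated constant $\sqrt 2 c_*$.

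Once the a priori bound is in hand on Galerkin approximations uniformly in $N$, global existence of a strong solution $u \in C([0,\infty),\mD(A^\alpha))$ follows by the usual compactness/passage-to-the-limit argument (the Gevrey bound is more than enough regularity), and uniqueness follows from the standard energy estimate for the difference of two strong solutions (using \eqref{Bweak} and Gr\"onwall). The integral estimate \eqref{intAa} is then obtained by integrating the differential inequality $\tfrac12 y' + c\,z \le C c_1^2(1+t)^{-2\lambda}$ over $[t,t+1]$: the left side gives $c\int_t^{t+1} z\,d\tau - \tfrac12 y(t+1) + \tfrac12 y(t) \le c\int_t^{t+1}z \,d\tau + \tfrac12 y(t)$, so $\int_t^{t+1} |u|_{\alpha+1/2,\sigma}^2 \le \tfrac1c\big(\tfrac12 y(t) + C c_1^2 \int_t^{t+1}(1+\tau)^{-2\lambda}d\tau\big)$, and inserting \eqref{uest} and $\int_t^{t+1}(1+\tau)^{-2\lambda}d\tau \le (1+t)^{-2\lambda}$ produces the claimed constant $2c_*^2(1+\tfrac{1}{2M_2})$ after matching $M_2 = d_1(2\lambda,1)$.

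\textbf{Main obstacle.} I expect the bookkeeping of constants to be the real work: the theorem is stated with \emph{explicit} sharp constants $c_0, c_1, t_*$ depending on $K$, $\lambda$, $\sigma$ through $d_0$ and $d_1$, so the differential-inequality argument must be run carefully enough that the absorbed fractions ($\tfrac16$ from the nonlinearity, etc.) leave room for exactly these values, and the $t\ge t_*=12\sigma$ threshold must be shown to be precisely what is needed to kill the $e^{\sigma}$-type transients. Conceptually the argument is routine; quantitatively, tracking the constants so that the bootstrap closes with the stated numbers is the delicate part.
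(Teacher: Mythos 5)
There is a genuine gap, and it is exactly at the point you dismiss as a simplification. You propose to test the equation with $A^{2\alpha}e^{2\sigma A^{1/2}}u$ with the \emph{fixed} exponent $\sigma$ from $t=0$ onward, remarking that ``since $\sigma$ is a fixed constant here\dots there is no extra term and the computation is cleaner.'' But the hypothesis \eqref{usmall} only controls $|A^\alpha u^0|=|u^0|_{\alpha,0}$; it does not place $u^0$ in $G_{\alpha,\sigma}$ at all. Your Gronwall step starts from ``the bound $y(0)\le c_0^2$'' with $y(t)=|u(t)|_{\alpha,\sigma}^2$, and that quantity at $t=0$ is $|u^0|_{\alpha,\sigma}^2$, which may be infinite. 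So for $\sigma>0$ the differential inequality cannot be integrated from $t=0$, and the bootstrap never gets started. The paper's proof avoids this precisely by using a time-dependent exponent $\varphi(t)$ with $\varphi(0)=0$, $\varphi\equiv\sigma$ on $[t_*,\infty)$, and $0<\varphi'<2\sigma/t_*=1/6$: the Gevrey regularity is \emph{gained} over the interval $[0,t_*]$, the initial condition only involves $|A^\alpha u^0|$, and the price is an extra term $\varphi'(t)\,|A^{1/2}u|_{\alpha,\varphi(t)}^2$ (coming from differentiating the multiplier $e^{\varphi(t)A^{1/2}}$) which is absorbed into the dissipation because $\varphi'<1/6$. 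This also corrects your reading of $t_*=12\sigma$: it is not there to ``dominate the $e^\sigma$-type transients'' in $d_0,d_1$, but is the minimal ramp-up time that keeps $\varphi'$ below $1/6$ so that the commutator term, the nonlinear term (another $1/6$ via $2c_*K^\alpha=1/6$), and the forcing cross term (a third $1/6$) together absorb only half of $|A^{1/2}u|_{\alpha,\varphi}^2$, leaving the clean inequality $\frac{d}{dt}|u|_{\alpha,\varphi}^2+|A^{1/2}u|_{\alpha,\varphi}^2\le 3|f|_{\alpha-1/2,\sigma}^2$.

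The rest of your outline (bootstrap on $|u|\le 2c_*$, Gronwall with $|A^{1/2}u|_{\alpha,\varphi}\ge|u|_{\alpha,\varphi}$, Lemma \ref{ineqlem} with the comparison $e^{-t}\le M_1(1+t)^{-2\lambda}$, and integrating over $[t,t+1]$ for \eqref{intAa}) matches the paper and closes with the stated constants once the correct differential inequality is in hand. But as written the argument only works for $\sigma=0$ (which the paper treats as the degenerate case $\varphi\equiv 0$); for $\sigma>0$ you need the increasing-exponent device, and it is a necessary idea rather than constant bookkeeping.
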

\begin{proof}
We will perform formal calculations below. They can be made rigorous by applying to solutions of the Galerkin approximations and then pass to the limit.
%The estimates will hold for the unique, regular solution $u(t)$.

(a) \textit{Case $\sigma>0$.} We denote by $\varphi$ a $C^\infty$-function on $\R$ that satisfies $\varphi(t)=0$ on $(-\infty,0]$, $\varphi(t)=\sigma$ on $[t_*,\infty)$, and $0< \varphi'(t)< 2\sigma/t_*=1/6$ on $(0,t_*)$.
We derive from \eqref{fctnse} that 
\begin{align}
\ddt (A^{\alpha}e^{\varphi(t) A^{1/2}}u) 
&=\varphi'(t)A^{1/2}A^{\alpha}e^{\varphi(t) A^{1/2}} u+A^{\alpha}e^{\varphi(t) A^{1/2}}\frac{du}{dt}  \notag\\
&=\varphi'(t)A^{\alpha+1/2}e^{\varphi(t) A^{1/2}} u+A^{\alpha}e^{\varphi(t) A^{1/2}}(-Au-B(u,u)+f).\label{daeu}
\end{align}

By taking the inner product in $H$ of  \eqref{daeu} with $A^{\alpha}e^{\varphi(t) A^{1/2}}u(t)$, we obtain 
\begin{align*}
&\frac12\ddt |u|_{\alpha,\varphi(t)}^2   + |A^{1/2}u|_{\alpha,\varphi(t)}^2
=\varphi'(t)\langle A^{\alpha+1/2}e^{\varphi(t) A^{1/2}}u,A^{\alpha}e^{\varphi(t) A^{1/2}} u\rangle\\ 
&\quad -\langle A^{\alpha}e^{\varphi(t) A^{1/2}}B(u,u),A^{\alpha}e^{\varphi(t) A^{1/2}}u\rangle+ \langle A^{\alpha-1/2}e^{\varphi(t) A^{1/2}}f,A^{\alpha+1/2}e^{\varphi(t) A^{1/2}}u\rangle.
\end{align*}

Using the Cauchy-Schwarz inequality, and estimating the second term on the right-hand side  by \eqref{AalphaB}, we get
\beq \label{s0}
\begin{aligned}
\frac12\ddt |u|_{\alpha,\varphi(t)}^2   + |A^{1/2}u|_{\alpha,\varphi(t)}^2
 &\le \varphi'(t) |A^{1/2}u|_{\alpha,\varphi(t)}^2\\
 &\quad +K^\alpha |A^{1/2}u|_{\alpha,\varphi(t)}^2 |u|_{\alpha,\varphi(t)}
+ |f(t)|_{\alpha-1/2,\varphi(t)}|A^{1/2}u|_{\alpha,\varphi(t)}.
\end{aligned}
\eeq 

Using the bound of $\varphi'(t)$ and applying Cauchy's inequality to the last term gives
\begin{align*}
&\frac12\ddt |u|_{\alpha,\varphi(t)}^2   + |A^{1/2}u|_{\alpha,\varphi(t)}^2
\le \frac16 |A^{1/2}u|_{\alpha,\varphi(t)}^2 \\
&+K^\alpha |u|_{\alpha,\varphi(t)} |A^{1/2}u|_{\alpha,\varphi(t)}^2 
 +\frac16|A^{1/2}u|_{\alpha,\varphi(t)}^2+ \frac3{2}|f(t)|_{\alpha-1/2,\varphi(t)}^2,
\end{align*}
which, together with the fact $\varphi(t)\le\sigma$, implies
\beq\label{s1}
\frac12\ddt |u|_{\alpha,\varphi(t)}^2 + \Big(1-\frac13 -K^\alpha |u|_{\alpha,\varphi(t)}\Big)|A^{1/2}u|_{\alpha,\varphi(t)}^2 \le \frac3{2}|f(t)|_{\alpha-1/2,\sigma}^2.
\eeq

(b) \textit{Case $\sigma=0$.} Let $\varphi(t)=0$ for all $t\in\R$. Then the first term in \eqref{s0} vanishes. Applying Cauchy's inequality to the last term of \eqref{s0}, we obtain
 \beq\label{dtAa}
 \frac12\ddt |A^\alpha u|^2+\Big(1-\frac13-K^\alpha|A^\alpha u|\Big)|A^{\alpha+1/2}u|^2\le \frac34|A^{\alpha-1/2}f|^2\le \frac32|A^{\alpha-1/2}f|^2.
 \eeq
Hence, we have the same inequality as \eqref{s1}.

\medskip
(c) For both cases $\sigma>0$ and $\sigma=0$, let $T\in(0,\infty)$. Note that 
$$|u(0)|_{\alpha,\varphi(0)}=|A^\alpha u^0|<2c_0\le 2c_*.$$ 

Assume that
\beq\label{uT}
|u(t)|_{\alpha,\varphi(t)}\le 2c_*\quad \forall t\in[0,T).
\eeq

This and the definition of $c_*$ give
\beq\label{Ku}
K^\alpha |u(t)|_{\alpha,\varphi(t)} \le 2c_* K^\alpha=1/6 \quad \forall t\in[0,T).
\eeq

For $t\in (0,T)$, we have from \eqref{s1}, \eqref{dtAa}, and \eqref{Ku} that 
\beq\label{s2}
\ddt |u|_{\alpha,\varphi(t)}^2 + |A^{1/2}u|_{\alpha,\varphi(t)}^2 \le 3|f(t)|_{\alpha-1/2,\sigma}^2.
\eeq

Applying Gronwall's inequality in \eqref{s2} yields for all $t\in(0,T)$ that
\begin{align*}
|u(t)|_{\alpha,\varphi(t)}^2
&\le e^{-t}|u^0|_{\alpha,0}^2+3\int_0^t e^{-(t-\tau)}|f(\tau)|_{\alpha-1/2,\sigma}^2 d\tau\\
\text{(by \eqref{usmall} and \eqref{fta}) }&\le e^{-t}c_0^2+3c_1^2\int_0^t \frac{e^{-(t-\tau)}}{(1+\tau)^{2\lambda}} d\tau.
\end{align*}

Using \eqref{mx2} to compare $e^{-t}$ with $(1+t)^{-2\lambda}$, and estimating the last integral by \eqref{i2} yield
\beqs 
|u(t)|_{\alpha,\varphi(t)}^2 \le   \frac{M_1 c_0^2}{(1+t)^{2\lambda}} + \frac{3c_1^2 M_2}{(1+t)^{2\lambda}}
\le   \frac{2 c_*^2}{(1+t)^{2\lambda}}.
\eeqs
This implies 
\beq\label{s4}
|u(t)|_{\alpha,\varphi(t)}
\le  \sqrt{2} c_* (1+t)^{-\lambda} \quad\forall t\in[0,T).
\eeq

Letting $t\to T^-$ in \eqref{s4} gives
\beq\label{limTu}
\lim_{t\to T^-}|u(t)|_{\alpha,\varphi(t)}
\le \sqrt{2}c_* (1+T)^{-\lambda} < 2c_*.
\eeq

Comparing \eqref{limTu} to \eqref{uT}, and by the standard contradiction argument, we deduce that the inequalities \eqref{uT} and \eqref{s4} hold for $T=\infty$.  
Then, thanks to $\varphi(t)=\sigma$ for all $t\ge t_*$,  inequality \eqref{s4} implies \eqref{uest}.

(d) For $t\ge t_*$, by integrating \eqref{s2} from $t$ to $t+1$, and using estimates \eqref{uest}, \eqref{fta}, we obtain
\begin{align*}
\int_t^{t+1} |A^{1/2}u(\tau)|_{\alpha,\sigma}^2d\tau
& \le | u(t)|_{\alpha,\sigma}^2+3c_1^2\int_t^{t+1}(1+\tau)^{-2\lambda}d\tau\\
&\le 2c_*^2 (1+t)^{-2\lambda }+3c_1^2(1+t)^{-2\lambda }\\
&= \Big(2 c_*^2 +\frac{c_*^2}{ M_2}\Big)(1+t)^{-2\lambda}.
\end{align*}
Then inequality \eqref{intAa} follows. The proof is complete.
\end{proof}

In the next theorem, we establish the power decay, as $t\to\infty$, for any Leray-Hopf weak solutions. Its proof combines the energy inequalities \eqref{Lenergy} and \eqref{iniener} with successive use of Theorem \ref{theo22}.

\begin{theorem}\label{theo23}
Assume  that there are $\sigma\ge 0$, $\alpha\ge 1/2$ and $\mu_1>0$ such that
\beq\label{falphaonly}
|f(t)|_{\alpha,\sigma}=\mathcal O(t^{-\mu_1})\quad\text{as }t\to\infty.
\eeq

Let $u(t)$ be a Leray-Hopf weak solution of \eqref{fctnse}.  
Then there exists  $T_*>0$ 
such that $u(t)$ is a regular solution of \eqref{fctnse} on $[T_*,\infty)$, and 
 for any $\varep\in(0,1)$, there exists $C>0$ such that 
 \beq\label{uep}
 |u(T_*+t)|_{\alpha+1-\varep,\sigma} \le C(1+ t)^{-\mu_1},
 \eeq
 \beq\label{Buep}
|B(u(T_*+t),u(T_*+t))|_{\alpha+1/2-\varep,\sigma}\le C (1+t)^{-2\mu_1},
\eeq
for all $t\ge 0$.
\end{theorem}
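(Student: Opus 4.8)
The plan is to use the energy inequalities to show that the solution becomes small and regular at some large time, and then to bootstrap via Theorem \ref{theo22} and Lemma \ref{odelem} (more precisely its $\xi=0$ version, Lemma \ref{linearNSE}).

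First I would fix $\mu_1$ as in \eqref{falphaonly}; by raising $\sigma$ and lowering $\alpha$ is not allowed, but since $G_{\alpha,\sigma}\subset G_{1/2,0}$ when $\alpha\ge 1/2$, we have in particular $|f(t)|=|f(t)|_{0,0}=\mathcal O(t^{-\mu_1})$ and $|f(t)|_{\alpha-1/2,\sigma}=\mathcal O(t^{-\mu_1})$ as well. From \eqref{iniener} and Lemma \ref{ineqlem} (applied with $\lambda=2\mu_1$, or rather after comparing $e^{-t}$ to a power via \eqref{mx2}), the $H$-norm $|u(t)|$ decays like $(1+t)^{-\mu_1}$; actually all that is needed at this stage is that $|u(t)|\to 0$. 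Next, using the energy inequality \eqref{Lenergy} over the window $[t_0,t_0+1]$, there is a set of $t_0$ of full measure along which $\int_{t_0}^{t_0+1}\|u(\tau)\|^2\,d\tau$ is controlled, hence some time $t_1$ at which $|u(t_1)|$ is as small as we like and $u(t_1)\in V$. At such a time, with $|u(t_1)|$ small, one can run the standard local-existence/regularity argument in $\mathcal D(A^{1/2})$ (or invoke the small-data global existence to come) to conclude $u$ is a regular solution on $[t_1,\infty)$; the key point is that the smallness of the data and the decay of $f$ prevent blow-up. So pick $T_0\ge t_1$ so that $u(T_0)\in\mathcal D(A^{1/2})=\mathcal D(A^{1/2})$ with small norm.

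The heart of the argument is then an $\alpha$-by-$\alpha$ bootstrap. Having reached $\mathcal D(A^{1/2})$ with controllably small data at some time, apply Theorem \ref{theo22} with regularity index $1/2$, $\sigma=0$ (after possibly a time shift to absorb $t_*=12\sigma$), using \eqref{falphaonly} to supply \eqref{fta}: this gives a strong solution with $|A^{1/2}u(t)|=\mathcal O((1+t)^{-\mu_1})$ and the integral bound \eqref{intAa}. Now write the equation as the linear problem $w'=-Aw+g$ with $w=u$ and $g=f-B(u,u)$; by the bilinear estimate \eqref{AalphaB} at level $\alpha=1/2$ and the bound just obtained, $|g(t)|_{0,\sigma}\le |f(t)|_{0,\sigma}+|B(u,u)(t)|_{0,\sigma}=\mathcal O((1+t)^{-\mu_1})$ once we are past $t_*$ (here I would use $|B(u,u)|_{0,\sigma}\le K^{1/2}|u|_{1/2,\sigma}^2$, which decays like $(1+t)^{-2\mu_1}$, hence certainly $\mathcal O((1+t)^{-\mu_1})$). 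Then Lemma \ref{linearNSE}(ii), with $\lambda=\mu_1$, upgrades $u(t)$ to $G_{1/2+1-\varep,\sigma}$ with the same decay rate $(1+t)^{-\mu_1}$, for every $\varep\in(0,1)$. One more turn of the crank — now the data lives in $G_{\alpha,\sigma}$-type spaces of higher order — applying Theorem \ref{theo22} again at regularity level $\alpha$ (legitimate since $\alpha\ge 1/2$ and, after the first step, $u(t)$ lies in $\mathcal D(A^\alpha)$ with small norm for $t$ large, because $3/2-\varep\ge\alpha$ when $\varep$ is chosen small only if $\alpha\le 3/2$ — in general one iterates finitely many times, each raising the index by almost $1$), and then Lemma \ref{linearNSE}(ii) once more with $g=f-B(u,u)$, $|g(t)|_{\alpha,\sigma}=\mathcal O((1+t)^{-\mu_1})$, yields \eqref{uep}: $|u(T_*+t)|_{\alpha+1-\varep,\sigma}\le C(1+t)^{-\mu_1}$. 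Finally \eqref{Buep} is immediate from \eqref{AalphaB}: $|B(u,u)|_{\alpha+1/2-\varep,\sigma}\le K^{\alpha+1/2-\varep}|u|_{\alpha+1-\varep,\sigma}^2\le C(1+t)^{-2\mu_1}$, after checking $\alpha+1/2-\varep\ge 1/2$, i.e. $\varep\le\alpha$, which holds for $\varep\in(0,1)$ when $\alpha\ge 1$ and otherwise is arranged by shrinking $\varep$ or by the monotonicity of Gevrey norms.

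The main obstacle I expect is the bookkeeping at the junctions: verifying that at the relevant large time the solution genuinely lies in $\mathcal D(A^\alpha)$ (not merely $\mathcal D(A^{1/2})$) with norm below the threshold $c_0(\alpha,\mu_1)$ of Theorem \ref{theo22}, so that the theorem may be re-applied at the higher index, and handling the loss of $\varep$ in the derivative count so that the finite iteration actually terminates at index $\alpha+1-\varep$ rather than falling short. The clean way around this is: (i) use Lemma \ref{linearNSE}(i) to get \emph{some} smoothing into $G_{1/2+1-\varep,\sigma}$ on $(0,\infty)$ with no smallness needed, which already puts $u$ in $\mathcal D(A^\beta)$ for any $\beta<3/2$; (ii) combine with the decay $|u(t)|_{1/2,\sigma}=\mathcal O((1+t)^{-\mu_1})\to 0$ and interpolation to make $|A^\beta u(t)|$ small for large $t$; (iii) choose $T_*$ large enough that $|A^\beta u(T_*)|<c_0(\beta,\mu_1)$ for the needed $\beta$, shift time, and apply Theorem \ref{theo22} at level $\beta$. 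Repeating this at most a fixed finite number of times — the number depending only on $\alpha$ — reaches level $\alpha$, and one last application of Theorem \ref{theo22} plus Lemma \ref{linearNSE}(ii) produces \eqref{uep}.
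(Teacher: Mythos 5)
Your overall architecture (energy estimates to find a good starting time, the small-data Gevrey theorem \ref{theo22}, a finite bootstrap in the regularity index, and a final pass through the linear smoothing lemma plus \eqref{AalphaB}) is the same as the paper's. But there is a genuine gap at the very first application of Theorem \ref{theo22}: you claim that \eqref{falphaonly} "supplies \eqref{fta}" with $\lambda=\mu_1$ after a time shift. Hypothesis \eqref{fta} is not an $\mathcal O$-statement; it requires the \emph{specific} small constant $c_1(\alpha,\lambda)$. If $|f(t)|_{\alpha-1/2,\sigma}\le M(1+t)^{-\mu_1}$ with $M>c_1$, no time shift helps at the borderline rate, since
\begin{equation*}
M(1+T+t)^{-\mu_1}\le c_1(1+t)^{-\mu_1}\quad\Longleftrightarrow\quad \frac{M}{c_1}\le\Big(\frac{1+T+t}{1+t}\Big)^{\mu_1},
\end{equation*}
and the right-hand side tends to $1$ as $t\to\infty$. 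Consequently your intermediate claims $|A^{1/2}u(t)|=\mathcal O((1+t)^{-\mu_1})$, and the subsequent ones at higher indices, are not justified as stated. The paper avoids this by deliberately degrading the rate: all applications of Theorem \ref{theo22} are run with some $\lambda<\mu_1$ (one can then choose $T$ so large that $M T^{-(\mu_1-\lambda)}\le c_1$), producing only $|u|_{\alpha+1/2,\sigma}=\mathcal O(t^{-\lambda})$ and $|B(u,u)|_{\alpha,\sigma}=\mathcal O(t^{-2\lambda})$; the sharp rate $\mu_1$ is recovered \emph{only} in the last step, by taking $\lambda=\mu_1/2$ so that $2\lambda=\mu_1$ and feeding $F=f-B(u,u)$, with $|F|_{\alpha,\sigma}=\mathcal O(t^{-\mu_1})$, into Lemma \ref{linearNSE}(ii) — whose hypothesis \eqref{flem} tolerates an arbitrary constant $M$. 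Your final step has exactly this structure, so the proposal is repairable, but the repair (rate $\lambda\in[\mu_1/2,\mu_1)$ throughout the bootstrap, sharp rate only at the end) is an essential feature of the argument, not bookkeeping.

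Two smaller points. First, your bootstrap should terminate with Theorem \ref{theo22} applied at level $\alpha+1/2$ (the largest level for which the needed force norm $|f|_{\alpha,\sigma}$ is available), since the bound $|B(u,u)|_{\alpha,\sigma}=\mathcal O(t^{-2\lambda})$ requires \emph{pointwise} control of $|u|_{\alpha+1/2,\sigma}$, not of $|u|_{\alpha,\sigma}$; the paper's Step 2--3 is organized around reaching exactly this level. Second, the paper's route to smallness of $|A^{\alpha+1/2}u(T)|$ is worth noting as it differs from your interpolation suggestion: for $\sigma>0$ a single application at level $1/2$ already controls $|A^{\alpha+1/2}u|$ for every $\alpha$ via \eqref{als} (the Gevrey weight dominates all powers), while for $\sigma=0$ one climbs by half a derivative at a time using the integral bounds \eqref{intAa} to locate good times. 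Your alternating scheme (Theorem \ref{theo22} then Lemma \ref{linearNSE}, gaining almost one derivative per round) can be made to work, but the quantitative smallness at each restart still has to come from a decaying norm, not merely from the qualitative smoothing of Lemma \ref{linearNSE}(i), whose bound grows in $t$.
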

\begin{proof}
The proof is divided into two parts.

\medskip
\noindent\textbf{Part A.} We prove the following weaker version of the statements.

\textit{For any $\lambda\in(0,\mu_1)$, there exists  $T_*>0$ 
such that $u(t)$ is a regular solution of \eqref{fctnse} on $[T_*,\infty)$, and one has for all $t\ge 0$ that
 \beq\label{preus0}
 |u(T_*+t)|_{\alpha+1/2,\sigma} \le K^{-\alpha-1/2}(1+ t)^{-\lambda},
 \eeq
 \beq\label{preBlt}
|B(u(T_*+t),u(T_*+t))|_{\alpha,\sigma}\le K^{-\alpha-1}(1+t)^{-2\lambda},
\eeq
where $K$ is the constant in inequality \eqref{AalphaB}.}

The proof of this part consists several steps.

\medskip
\textit{Step 1.} By assumption (A) and \eqref{falphaonly}, there exists  $M>0$ such that
\beq\label{fkappa}
|f(t)|\le M (1+t)^{-\mu_1} \quad\text{a.e. in } (0,\infty).
\eeq

It follows \eqref{iniener} and \eqref{fkappa} that, for all $t>0$,
\begin{align*} \label{uener2}
|u(t)|^2
&\le e^{-t}|u_0|^2 + M^2 \int_0^t \frac{e^{-(t-\tau)}}{(1+\tau)^{2\mu_1}}d\tau \\
\text{(by \eqref{mx2} and \eqref{i2})}
&\le C_1(1+t)^{-2\mu_1}|u_0|^2  + M^2 C_2 (1+t)^{-2\mu_1},
\end{align*}
where 
$C_1=d_0(2\mu_1,1)e$ and $C_2=d_1(2\mu_1,1)$.
Thus, 
\beq\label{uenerM}
|u(t)|^2\le  (|u_0|^2 C_1 +M^2 C_2 )(1+t)^{-2\mu_1}\quad\forall t\ge 0.
\eeq

In \eqref{Lenergy}, we estimate
\beqs
|\inprod{f(\tau),u(\tau)}|\le \frac12 |u(\tau)|^2+\frac12 |f(\tau)|^2 \le \frac12 \|u(\tau)\|^2+\frac12 |f(\tau)|^2.
\eeqs
Hence, we obtain 
	\beq\label{intt0}
		|u(t)|^2+\int_{t_0}^t \|u(\tau)\|^2d\tau\le |u(t_0)|^2+\int_{t_0}^t |f(\tau)|^2\ d\tau,
	\eeq
for $t_0=0$ and almost all $t_0\in(0,\infty)$, and all $t\ge t_0$.

Letting $t=t_0+1$ in \eqref{intt0}, using \eqref{uenerM} to estimate $|u(t_0)|^2$, and \eqref{fkappa} to estimate $|f(\tau)|$, we derive
\beq\label{uVest}
\begin{aligned}
\int_{t_0}^{t_0+1}\|u(\tau)\|^2d\tau 
&\le ( |u_0|^2 C_1 +M^2 C_2 )(1+t_0)^{-2\mu_1} + M^2 (1+t_0)^{-2\mu_1} \\
&= (|u_0|^2 C_1 +M^2 C_2 +M^2) (1+t_0)^{-2\mu_1}.
\end{aligned}
\eeq 

To establish \eqref{uVest} for any $t_0$, we use the following approximation.
Let $t\ge 0$ be arbitrary. There exists a sequence $\{t_n\}_{n=1}^\infty$ in $(0,\infty)$ such that  $\lim_{n\to\infty}t_n=t$ and  \eqref{uVest} holds for $t_0=t_n$, i.e., 
\beqs
\int_{t_n}^{t_n+1}\|u(\tau)\|^2d\tau 
\le  \Big(|u_0|^2 C_1 +M^2 C_2 +M^2 \Big) (1+t_n)^{-2\mu_1}.
\eeqs
Then letting $n\to\infty$ gives
 \beq\label{tt1}
\int_t^{t+1}\|u(\tau)\|^2d\tau 
\le \Big(|u_0|^2 C_1 +M^2 C_2 +M^2 \Big) (1+t)^{-2\mu_1} \quad\forall t\ge 0.
\eeq

\medskip
\textit{Step 2.} We prove that  there exists   $T>0$ so that
\begin{align}\label{Aut}
|A^{\alpha+1/2} u(T)|&\le c_0(\alpha+1/2,\lambda),\\
\label{ftt}
|f(T+t)|_{\alpha,\sigma}&\le c_1(\alpha+1/2,\lambda) (1+t)^{-\lambda} \quad \forall t\ge 0.
\end{align}

(a) \textit{Case $\sigma>0$.}
Set $\lambda'=(\lambda + \mu_1)/2$, which is a number in the interval $(\lambda, \mu_1)$.
Thanks to the decay in \eqref{tt1} and \eqref{falphaonly}, there exists $t_0>0$ such that
\begin{align*}
|A^{1/2}u(t_0)|&<c_0(1/2,\lambda'),\\
|f(t_0+t)|_{0,\sigma}&\le c_1(1/2,\lambda')(1+t)^{-\lambda'}\quad \forall t\ge 0,
\end{align*}
where $c_0(\cdot,\cdot)$ and $c_1(\cdot,\cdot)$ are defined in \eqref{c01}.

Applying Theorem \ref{theo22} to solution $t\mapsto u(t_0+t)$, force  $t\mapsto f(t_0+t)$ with parameters $\alpha=1/2$ and $\lambda= \lambda'$, we obtain from \eqref{uest} that 
\beq\label{ut1}
|u(t_0+t)|_{1/2,\sigma}\le \sqrt 2 c_{*,1/2} (1+t)^{-\lambda'}\le K^{-1/2} (1+t)^{-\lambda'}\quad \forall t\ge t_*\eqdef 12\sigma.
\eeq

Then by \eqref{als}, we have for all $t\ge t_*$ that
\beqs
|A^{\alpha+1/2} u(t_0+t)|
\le  d_0(2\alpha+1,\sigma) |e^{\sigma A^{1/2}} u(t_0+t)|
\le d_0(2\alpha+1,\sigma)   |u(t_0+t)|_{1/2,\sigma},
\eeqs
and, thanks to \eqref{ut1},
\beq\label{Aaut0}
|A^{\alpha+1/2} u(t_0+t)|
\le d_0(2\alpha+1,\sigma)  K^{-1/2} (1+t)^{-\lambda'}.
\eeq

Since $\lambda'>\lambda$, it follows \eqref{Aaut0} and \eqref{falphaonly} that there is a sufficiently large  $T>t_0+t_*$ so that \eqref{Aut} and \eqref{ftt} hold.

\medskip
(b) \textit{Case $\sigma=0$.}
First we claim that 

\medskip
\textit{Claim:} If $j\in \N$ such that  $j\le 2\alpha+1$ and 
\beq\label{intAj}
\lim_{t\to\infty}\int_t^{t+1}|A^{j/2}u(\tau)|^2d\tau=0, 
\eeq
then
\beq\label{intAp1}
\lim_{t\to\infty}\int_t^{t+1}|A^{(j+1)/2}u(\tau)|^2d\tau=0.
\eeq

\textit{Proof of Claim.} Note that $(j-1)/2 \le\alpha$, and thanks to \eqref{falphaonly}, we have 
\beq\label{fj}
|A^\frac{j-1}2f(t)|=\mathcal O(t^{-\mu_1 })\quad\text{as }t\to\infty.
\eeq

By \eqref{intAj} and \eqref{fj},  there is $T>0$ so that
\beqs
|A^{j/2} u(T)|\le c_0(j/2,\lambda),
\eeqs
\beqs
|A^{j/2-1/2}f(T+t)|\le c_1(j/2,\lambda)(1+t)^{-\lambda} \quad \forall t\ge 0.
\eeqs

Applying Theorem \ref{theo22}  to $u(T+\cdot)$, $f(T+\cdot)$,  $\alpha:=j/2$, $\sigma:=0$,  we obtain 
\beqs%\label{intAj1}
\int_t^{t+1}|A^{(j+1)/2}u(\tau)|^2d\tau=\mathcal O(t^{-2\lambda})\quad\text{as }t\to\infty,
\eeqs
which proves \eqref{intAp1}. 

\medskip
Now, let $m$ be a non-negative integer such that
$2\alpha \le m<2\alpha+1$. 

Note that $m\ge 1$, and, because of \eqref{tt1}, condition \eqref{intAj} holds true for $j=1$.
Hence we obtain \eqref{intAp1} with $j=1$, which is \eqref{intAj} for $j=2$.
This way, we are able to apply the \textit{Claim} recursively for $j=1,2,\ldots,m$, and obtain, when $j=m$, from \eqref{intAp1} that
\beqs
\lim_{t\to\infty} \int_t^{t+1}|A^{(m+1)/2}u(\tau)|^2d\tau=0.
\eeqs
Since $\alpha\le m/2$, it follows that
\beq\label{intAm}
\lim_{t\to\infty}\int_t^{t+1}|A^{\alpha+1/2}u(\tau)|^2d\tau=0.
\eeq

By \eqref{intAm} and \eqref{falphaonly}, we assert that there is $T>0$ so that \eqref{Aut} and \eqref{ftt} hold.

\medskip
\textit{Step 3.}
With $T>0$ in \text{Step 2}, we apply Theorem \ref{theo22} to $u(T+\cdot)$, $f(T+\cdot)$, $\alpha:=\alpha+1/2$, and obtain that there is $T_*>T+t_*$  such that
\beqs%\label{us1}
|u(T_*+t)|_{\alpha+1/2,\sigma}\le \sqrt 2 c_{*,\alpha+1/2}  (1+t)^{-\lambda}
\le \frac{1}{K^{\alpha+1/2}}(1+t)^{-\lambda}\quad \forall t\ge 0.
\eeqs 
This proves \eqref{preus0}.
Then  applying inequality \eqref{AalphaB} with the use of estimate \eqref{preus0} yields \eqref{preBlt}. 

\medskip
\noindent\textbf{Part B.} We now prove \eqref{uep} and \eqref{Buep}. 
We write equation \eqref{fctnse} as
\beq\label{ulin} u_t+Au=F(t)\eqdef -B(u(t),u(t))+f(t) .
\eeq

By Part A, we set $\lambda= \mu_1/2$ in \eqref{preBlt}  to obtain

\beq
|B(u(t),u(t))|_{\alpha,\sigma}=\mathcal O(t^{-\mu_1}) \text{ as }t\to\infty.
\eeq

From this and \eqref{falphaonly}, we have
\beqs
|F(t)|_{\alpha,\sigma}=\mathcal O(t^{-\mu_1})\text{ as }t\to\infty.
\eeqs

Applying part (ii) of Lemma \ref{linearNSE} to \eqref{ulin} on $(T,\infty)$ for some sufficiently large $T$ and any $\varep\in(0,1)$, we obtain the first inequality \eqref{uep}. Then the second inequality \eqref{Buep} follows \eqref{AalphaB} and \eqref{uep}. 
(In these arguments, the values of  $T_*$ and $C$ were adjusted appropriately.) The proof is complete.
\end{proof}

\begin{remark}
 The estimates \eqref{preus0} and \eqref{preBlt} are similar to those in \cite[Proposition 3.4]{HM2}.
 The improved estimates \eqref{uep} and \eqref{Buep} with the stronger norms come from the better regularity result in Lemma \ref{linearNSE}.
\end{remark}

\section{Asymptotic expansions}\label{expsec}

This section consists of the first main results of this paper. Briefly speaking, when the force has a finite or infinite expansion in terms of power-decaying functions, then any Leray-Hopf weak solution will have an asymptotic expansion of the same type.

\subsection{Finite expansions}\label{finsubsec}

We start with the following consideration for the force $f(t)$.

\textbf{(B1)} \textit{Suppose there exist numbers $\sigma\ge 0$, $\alpha\ge 1/2$,  an integer $N_0\ge1$, strictly increasing, positive  numbers $\gamma_n$ and functions  $\psi_n\in G_{\alpha,\sigma}$ for $1\le n\le N_0$, and a  number $\delta>0$ such that  
\beq \label{fgam}
\Big|f(t)-\sum_{n=1}^{N_0} \psi_n t^{-\gamma_n}\Big|_{\alpha,\sigma}=\mathcal O(t^{-\gamma_{N_0}-\delta}) \quad\text{as }t\to\infty.
\eeq
}

% In short, we write 
% \beq\label{ffirst}
% f(t)\polsim \sum_{n=1}^{N_0}\psi_n t^{-\gamma_n} \text{ in } G_{\alpha,\sigma} \text{ as } t\to\infty.
% \eeq

Note from \eqref{fgam} that  $f(t)$ belongs to $G_{\alpha,\sigma}$ for all $t$ sufficiently large.

Although the force $f(t)$ has an expansion in terms of $t^{-\gamma_n}$'s, the solution $u(t)$ of the NSE may not.
In fact, due to the system's nonlinearity and time derivative, $u(t)$ may be expanded in terms of functions of different powers, which we describe now. 

\medskip
We define the following set of powers generated by $\gamma_n$'s and 1:
\begin{align*}
S_*=\Big \{  (\sum_{j=1}^p \gamma_{n_j}) + k:&\text{ for some } p\ge 1,\ 1\le n_1,n_2,\ldots,n_p\le N_0, \\
&\text{ and some integer }k\ge 0\Big  \}. 
\end{align*}

Note that $S_*$ is an infinite subset of $(0,\infty)$, and possesses the property
\beq \label{Sprop}
\forall x,y\in S_*:\ x+1,x+y\in S_*.
\eeq 

By ordering this set, one has
\beq \label{museq}
S_*=\{\mu_n:n\in\N\}, \text{ where $\mu_n$'s are strictly increasing.}
\eeq

The set of powers that will be used for the expansion of $u(t)$ is 
$$S=S_*\cap [\gamma_1,\gamma_{N_0}].$$ 

This set $S$ is finite, and
\beq 
S=\{ \mu_n: 1 \le n\le N_*\}\text{ for some }N_*\ge N_0.
\eeq

Note that $\mu_1=\gamma_1$ and $\mu_{N_*}= \gamma_{N_0}$. 
Then we rewrite \eqref{fgam} as
\beq \label{ffinite}
\Big|f(t)-\sum_{n=1}^{N_*} \phi_n t^{-\mu_n}\Big|_{\alpha,\sigma}=\mathcal O(t^{-\mu_{N_*}-\delta}) \quad\text{as }t\to\infty,
\eeq
where $\phi_n\in G_{\alpha,\sigma}$ for $1\le n\le N_*$, which can be defined explicitly as follows.
If there exists $k\in [1,N_0]$ such that $\mu_n=\gamma_k$, then, with such $k$, $\phi_n=\psi_k$; otherwise, $\phi_n=0$.

Our first result on the expansion of Leray-Hopf weak solutions is the following.

\begin{theorem}%[Finite asymptotic approximation]
\label{finitetheo}
Assume {\rm (B1)}. Let $\mu_n$'s be as in \eqref{museq}, and let the corresponding equation \eqref{ffinite} hold true. 
Define $\xi_1$, $\xi_2$, $\ldots$, $\xi_{N_*}$ recursively by  
\begin{align} \label{xi1}
\xi_1&=A^{-1}\phi_1,\\
\label{xin1}
\xi_n&=A^{-1}\Big(\phi_n + \chi_n - \sum_{\stackrel{1\le k,m\le n-1,}{\mu_k+\mu_m=\mu_n}}B(\xi_k,\xi_m)\Big) \quad\text{for } 2\le n\le N_*,
\end{align}
where 
\beq \label{chin}
\chi_n=
\begin{cases}
\mu_p \xi_p, & \text{ if there exists an integer  $p\in [1, n-1]$ such that $\mu_p +1= \mu_n$},\\
0,&\text{ otherwise}.
\end{cases}
\eeq 

Let $u(t)$ be a Leray-Hopf weak solution of \eqref{fctnse} and \eqref{uzero}. 
Then it holds for all $\rho \in (0,1)$ that
\beq \label{ufinite}
\Big |u(t)-\sum_{n=1}^{N_*} \xi_n t^{-\mu_n}\Big|_{\alpha+1-\rho,\sigma}=\mathcal O(t^{-\mu_{N_*}-\varep_*})\quad\text{as }t\to\infty,
 \eeq
where
\beq\label{epstar}
\varep_*=\begin{cases}
          \min\{\delta,\mu_1,1\}& \text{if } N_*=1,\\
          \min\{\delta, \mu_{N_*}-\mu_{N_*-1}, \mu_{N_*+1}-\mu_{N_*}\}& \text{if } N_*\ge 2. 
         \end{cases}
\eeq
\end{theorem}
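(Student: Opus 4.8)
The plan is to prove \eqref{ufinite} by induction on $N_*$, reducing at each stage to an application of Lemma \ref{odelem} (or Lemma \ref{linearNSE}) to a linearized equation whose right-hand side is the ``error'' after subtracting the partial sum. First I would set $v(t) = u(t) - \sum_{n=1}^{N_*}\xi_n t^{-\mu_n}$ and compute $v'(t) + Av(t)$ using \eqref{fctnse}. The time derivative of $\xi_n t^{-\mu_n}$ produces $-\mu_n \xi_n t^{-\mu_n-1}$, the Stokes term produces $A\xi_n t^{-\mu_n}$, and $B(u,u)$ expands, via bilinearity, into $\sum_{k,m}B(\xi_k,\xi_m)t^{-\mu_k-\mu_m}$ plus cross terms with $v$ and a genuinely higher-order remainder. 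Using the recursion \eqref{xi1}--\eqref{xin1}, the property \eqref{Sprop} (so that $\mu_k+\mu_m$ and $\mu_p+1$ land in $S_*$, hence equal some $\mu_j$ whenever they lie in the relevant range), and the definition \eqref{chin} of $\chi_n$, the terms of order $t^{-\mu_n}$ for $n\le N_*$ should cancel exactly by design; what survives on the right-hand side is $f(t) - \sum_{n=1}^{N_*}\phi_n t^{-\mu_n}$, which is $\mathcal{O}(t^{-\mu_{N_*}-\delta})$ by \eqref{ffinite}, together with terms of order $t^{-\mu_{N_*}-1}$ (from the $-\mu_n\xi_n t^{-\mu_n-1}$ with $\mu_n+1 > \mu_{N_*}$), terms $B(\xi_k,\xi_m)t^{-\mu_k-\mu_m}$ with $\mu_k+\mu_m > \mu_{N_*}$, and terms involving $v$.

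For the base case $N_* = 1$, the equation for $v = u - \xi_1 t^{-\mu_1}$ has right-hand side of order $t^{-\mu_1-\varep_*}$ where $\varep_* = \min\{\delta,\mu_1,1\}$ after one uses Theorem \ref{theo23} to control $|B(u,u)|$ and hence $v$ in suitable norms on $[T_*,\infty)$; then Lemma \ref{linearNSE}(ii) (the $\xi=0$ version applied to $u-\xi_1 t^{-\mu_1}$, or equivalently Lemma \ref{odelem} with the constant $\xi_1$ shifted in) gives the decay $|v(t)|_{\alpha+1-\rho,\sigma} = \mathcal{O}(t^{-\mu_1-\varep_*})$ — here one has to be slightly careful because $t^{-\mu_1}$ is not exactly $A^{-1}\phi_1 \cdot(\text{const})$ as in Lemma \ref{odelem}; the cleaner route is to write $u - \xi_1 t^{-\mu_1} = (u - w) + (w - \xi_1 t^{-\mu_1})$ for an auxiliary $w$, or simply to absorb the mismatch term $-\mu_1\xi_1 t^{-\mu_1-1}$ into the forcing since it decays faster. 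For the inductive step, assuming \eqref{ufinite} holds with $N_*$ replaced by $N_*-1$ and exponent $\mu_{N_*-1}+\varep'$, one knows $u(t) - \sum_{n=1}^{N_*-1}\xi_n t^{-\mu_n}$ is small; feeding this into the equation for $v$ at level $N_*$ shows the $v$-dependent terms in $B$ are of order $t^{-\mu_{N_*-1}-\varep'}\cdot t^{-\mu_1}$ or better, hence negligible compared to $t^{-\mu_{N_*}}$ once $\varep'$ and the gap are handled, and again Lemma \ref{odelem} closes the argument with the stated $\varep_*$ from \eqref{epstar}.

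The main obstacle, and the step requiring the most care, is the bookkeeping of which monomials $t^{-\beta}$ appear on the right-hand side of the equation for $v$ and verifying that every $\beta \le \mu_{N_*}$ either cancels (by the recursion) or has coefficient that can be grouped into an exact $t^{-\mu_j}$ term — this is where \eqref{Sprop} and the ordering \eqref{museq} are essential, since one must know $\mu_k + \mu_m \in S_*$ and, if $\le \mu_{N_*}$, equals some $\mu_j$ with $j\le N_*$, so that $B(\xi_k,\xi_m)$ is accounted for in the definition of $\xi_j$. A secondary technical point is matching regularity indices: $B(\xi_k,\xi_m)$ requires $\xi_k,\xi_m \in G_{\alpha+1/2,\sigma}$ for \eqref{AalphaB}, and since $\xi_n = A^{-1}(\cdots) \in G_{\alpha+1,\sigma}$ by construction (each $\phi_n,\chi_n \in G_{\alpha,\sigma}$ or better, and $A^{-1}$ gains two derivatives — this is essentially Lemma \ref{xireg} referenced in the introduction), the products $B(\xi_k,\xi_m)$ land in $G_{\alpha,\sigma}$, keeping the forcing in the space where Lemma \ref{odelem} applies and yielding the output in $G_{\alpha+1-\rho,\sigma}$. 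One also needs the elementary estimates $|t^{-\mu_k}\cdot t^{-\mu_m}| = t^{-\mu_k-\mu_m}$ and that a finite sum of $\mathcal{O}(t^{-\beta_i})$ with $\beta_i > \mu_{N_*}$ is $\mathcal{O}(t^{-\mu_{N_*}-\varep_*})$ with $\varep_* = \min_i(\beta_i - \mu_{N_*})$, which forces the particular form \eqref{epstar} — the three candidates being $\delta$ (from the force error), $\mu_{N_*+1}-\mu_{N_*}$ (the next available power in $S_*$, relevant because the surviving $B$-terms and derivative-terms have powers in $S_*$ strictly above $\mu_{N_*}$, hence at least $\mu_{N_*+1}$), and $\mu_{N_*}-\mu_{N_*-1}$ (arising from the induction hypothesis feeding back through the bilinear cross terms).
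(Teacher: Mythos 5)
Your proposal follows essentially the same route as the paper's proof: induction on the number of terms in the expansion, a linearized equation for the remainder whose monomials of order $t^{-\mu_n}$, $n\le N$, cancel exactly by the recursion \eqref{xi1}--\eqref{xin1} together with \eqref{Sprop}, the a priori decay of $u$ and $B(u,u)$ from Theorem \ref{theo23} to start the induction, and Lemma \ref{odelem} (applied after multiplying the remainder by $t^{\mu_{N+1}}$, or equivalently after absorbing the mismatch terms into the forcing, as you note) to close each step. The one point you gloss over is that a single-pass induction only yields the exponent $\varep_{N_*}=\min\{\delta,\varep_{N_*-1},\mu_{N_*+1}-\mu_{N_*}\}$ with $\varep_{N_*-1}$ propagated from below and hence possibly much smaller than $\mu_{N_*}-\mu_{N_*-1}$; to obtain the exact value \eqref{epstar} the paper first runs the induction up to level $N_*$, deduces from it the improved bound $\bigl|u(t)-\sum_{n=1}^{N_*-1}\xi_n t^{-\mu_n}\bigr|_{\alpha+1-\rho,\sigma}=\mathcal O(t^{-\mu_{N_*}})$, and then re-runs only the last inductive step with this sharper input, a small bootstrap you would need to add to justify the precise form of $\varep_*$.
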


\bigskip
We make a couple of notes on the formulas of $\xi_n$'s.

(a) In case $n=1$, we set $\chi_1=0$, and use the convention that the last term on the right-hand side of \eqref{xin1} vanishes, then formula \eqref{xin1} agrees with \eqref{xi1}, and hence holds also for $n=1$.

(b) The relation between $\phi_n$'s and $\xi_n$'s is one-to-one. Indeed, the $\phi_n$'s can be solved recursively from \eqref{xi1} and \eqref{xin1} by 
\beq\label{phisolve}
\left\{
\begin{aligned}
& \phi_1=A \xi_1, \\ 
 &\phi_n=A \xi_n -\chi_n + \sum_{\stackrel{k,m\ge 1,}{\mu_k+\mu_m=\mu_n}}B(\xi_k,\xi_m), \quad n \ge 2.
 \end{aligned}
 \right.
 \eeq
where the $\chi_n$'s are still defined by \eqref{chin}.

\medskip
The fact that we can have $\alpha$ fixed instead of requiring \eqref{ffinite} to hold for all $\alpha>0$ comes from the following regularity property of $\xi_n$'s.

\begin{lemma}\label{xireg}
 Let $\phi_n$ and $\xi_n$, for $1\le n\le N_*$, be as in Theorem \ref{finitetheo}. Then
 \beq\label{ximore}
 \xi_n\in  G_{\alpha+1,\sigma}\quad \forall n=1,2,\ldots N_*.
 \eeq
\end{lemma}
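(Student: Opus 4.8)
The plan is to induct on $n$, using the recursive definitions \eqref{xi1}--\eqref{xin1} together with the two sources of regularity gain already available in the excerpt: the operator $A^{-1}$ maps $G_{\beta,\sigma}$ into $G_{\beta+1,\sigma}$, and the bilinear estimate \eqref{AalphaB} controls $B$ in Gevrey norms. The hypothesis of (B1) gives $\phi_n\in G_{\alpha,\sigma}$ for all $n$, and also $\chi_n$ (being a scalar multiple of an earlier $\xi_p$) and each $B(\xi_k,\xi_m)$ will be seen to lie in spaces at least as regular as $G_{\alpha,\sigma}$; applying $A^{-1}$ then lands $\xi_n$ in $G_{\alpha+1,\sigma}$.

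First I would handle the base case $n=1$: since $\xi_1=A^{-1}\phi_1$ and $\phi_1\in G_{\alpha,\sigma}=\mathcal D(A^\alpha e^{\sigma A^{1/2}})$, we get $\xi_1\in \mathcal D(A^{\alpha+1}e^{\sigma A^{1/2}})=G_{\alpha+1,\sigma}$ directly from the spectral definition of $A^{-1}$. For the inductive step, assume $\xi_k\in G_{\alpha+1,\sigma}$ for all $k\le n-1$. Then $\chi_n$ is either $0$ or $\mu_p\xi_p$ with $p\le n-1$, so $\chi_n\in G_{\alpha+1,\sigma}\subset G_{\alpha,\sigma}$. For the bilinear terms, each pair $(k,m)$ in the sum has $k,m\le n-1$, so $\xi_k,\xi_m\in G_{\alpha+1,\sigma}$; since $\alpha\ge 1/2$ we have $\alpha+1\ge \alpha+1/2$, so $\xi_k,\xi_m\in G_{\alpha+1/2,\sigma}$ and inequality \eqref{AalphaB} gives $B(\xi_k,\xi_m)\in G_{\alpha,\sigma}$ with $|B(\xi_k,\xi_m)|_{\alpha,\sigma}\le K^\alpha |\xi_k|_{\alpha+1/2,\sigma}|\xi_m|_{\alpha+1/2,\sigma}<\infty$. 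Hence the quantity in parentheses on the right-hand side of \eqref{xin1} belongs to $G_{\alpha,\sigma}$, and applying $A^{-1}$ yields $\xi_n\in G_{\alpha+1,\sigma}$, completing the induction.

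I do not expect a serious obstacle here; the only point requiring a little care is confirming that $B(\xi_k,\xi_m)$ is no worse than $G_{\alpha,\sigma}$ — this is exactly where the restriction $\alpha\ge 1/2$ in (B1) is used, since \eqref{AalphaB} requires the inputs to sit in $G_{\alpha+1/2,\sigma}$, and the inductive hypothesis supplies $G_{\alpha+1,\sigma}$, which embeds into $G_{\alpha+1/2,\sigma}$ precisely because $\alpha+1\ge \alpha+1/2$ (indeed strictly better, by a full half-derivative, but we only need the inclusion). One should also note that the sum over $(k,m)$ in \eqref{xin1} is finite and that each index genuinely satisfies $k,m\le n-1$ (because $\mu_k+\mu_m=\mu_n$ with $\mu_j>0$ strictly increasing forces both $\mu_k,\mu_m<\mu_n$), so the induction is well-founded. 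No finer asymptotic information is needed — this is purely a membership statement — so the proof is short.
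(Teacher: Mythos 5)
Your proof is correct and follows essentially the same induction as the paper: base case from $\xi_1=A^{-1}\phi_1$, inductive step combining $\phi_n,\chi_n\in G_{\alpha,\sigma}$ with the bilinear estimate \eqref{AalphaB} and the one-derivative gain from $A^{-1}$. The only cosmetic difference is that the paper applies \eqref{AalphaB} at level $\alpha+1/2$ to get $B(\xi_k,\xi_m)\in G_{\alpha+1/2,\sigma}$ and hence $A^{-1}B(\xi_k,\xi_m)\in G_{\alpha+3/2,\sigma}\subset G_{\alpha+1,\sigma}$, whereas you apply it at level $\alpha$; both land in $G_{\alpha+1,\sigma}$ and are equally valid.
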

\begin{proof}
We prove \eqref{ximore} by induction. 

When $n=1$, since $\phi_1\in G_{\alpha,\sigma}$, we have $\xi_1\in A^{-1}\phi_1\in G_{\alpha+1,\sigma}$.

Let $1\le n<N_*$, and assume that  all $\xi_1,\ldots,\xi_n\in G_{\alpha+1,\sigma}$. It implies that 
$$\chi_{n+1}\in G_{\alpha+1,\sigma}\subset G_{\alpha,\sigma}.$$
This and the fact $\phi_{n+1}\in G_{\alpha,\sigma}$  yield $A^{-1}(\phi_{n+1}+\chi_{n+1})\in G_{\alpha+1,\sigma}$.

For $1\le k,m\le n$, we have from the induction hypothesis that $\xi_k,\xi_m\in G_{\alpha+1,\sigma}$. Then, by \eqref{AalphaB},
 \beqs
 B(\xi_k,\xi_m)\in G_{\alpha+1-1/2,\sigma}=G_{\alpha+1/2,\sigma},
 \eeqs
which yields
\beqs
 A^{-1}B(\xi_k,\xi_j)\in G_{\alpha+3/2,\sigma}\subset G_{\alpha+1,\sigma}.
 \eeqs
 
 Therefore,
\beqs  
 \xi_{n+1}=A^{-1}(\phi_{n+1} + \chi_{n+1}) - \sum_{\stackrel{1\le k,m \le n,}{\mu_k+\mu_m=\mu_{n+1}}}A^{-1}B(\xi_k,\xi_m) \in G_{\alpha+1,\sigma}.
\eeqs 

By the induction principle, $\xi_n\in G_{\alpha+1,\sigma}$ for all $1\le n\le N_*$.
\end{proof}

Before proceeding with the proof of Theorem \ref{finitetheo}, we observe from \eqref{ffinite} that if $1\le N<N_*$ then
\begin{align*}
\Big|f(t)-\sum_{n=1}^N \phi_n t^{-\mu_n}\Big|_{\alpha,\sigma}
&\le \Big|f(t)-\sum_{n=1}^{N_*} \phi_n t^{-\mu_n}\Big|_{\alpha,\sigma}+ \Big|\sum_{n=N+1}^{N_*} \phi_n t^{-\mu_n}\Big| \\
&=\mathcal O(t^{-\mu_{N_*}-\delta})+\mathcal O(t^{-\mu_{N+1}}) \text{ as }t\to\infty,
\end{align*}
hence,
\beq\label{frate}
\Big|f(t)-\sum_{n=1}^N \phi_n t^{-\mu_n}\Big|_{\alpha,\sigma}
=\mathcal O(t^{-\mu_{N+1}}).
\eeq

Thus,  one has, for $1\le N\le N_*$, that
\beq\label{Fcond}
\Big|f(t)-\sum_{n=1}^N \phi_n t^{-\mu_n}\Big|_{\alpha,\sigma}
=\mathcal O(t^{-\mu_N-\delta_N}), 
\eeq
where 
\beq\label{delN}
\delta_N=\begin{cases}
          \mu_{N+1}-\mu_N&\text{for }1\le N<N_*,\\
          \delta&\text{for } N=N_*.
         \end{cases}
\eeq
 
\begin{proof}[Proof of Theorem \ref{finitetheo}]
(i) We first prove that if $N$ is any integer in $[1,N_*]$, then there exists a number $\varep_N>0$ such that for all $\rho \in (0,1)$
 \beq\label{remdelta}
\Big |u(t)-\sum_{n=1}^N \xi_n t^{-\mu_n}\Big |_{\alpha + 1 -\rho,\sigma} =\mathcal O(t^{-\mu_N-\varep_N} )\quad\text{as } t\to\infty.
 \eeq

\textit{Proof of \eqref{remdelta}.} 
We use the following notation. For an integer $n\in[1,N_*]$, define 
\begin{align*}%\label{Fn}
&F_n(t)=\phi_n t^{-\mu_n},\quad \bar F_n(t)=\sum_{j=1}^n F_j(t),\quad \text{and}\quad \tilde F_n(t)=f(t)-\bar F_n(t),\\
&u_n(t)=\xi_n t^{-\mu_n},\quad \bar u_n(t)=\sum_{j=1}^n u_j(t),\quad\text{and}\quad v_n=u(t)-\bar u_n(t).
\end{align*}

In calculations below, all differential equations hold in $V'$-valued distribution sense on $(T,\infty)$ for any $T>0$, which is similar to \eqref{varform}.
One can easily verify them by using \eqref{Bweak}, and the facts $u\in L^2_{\rm loc}([0,\infty),V)$ and $u'\in L^1_{\rm loc}([0,\infty),V')$ in Definition \ref{lhdef}.

We prove \eqref{remdelta} by induction in $N$.

\medskip
\textit{First step: $N=1$.} Let $w_1(t)=t^{\mu_1} u(t)$. 
We have, for $t>0$, that
\beq\label{wj}
w_1'(t) + A w_1(t) = \phi_1 + H_1(t),
\eeq
where
\beqs%\label{H0def}
H_1(t)=t^{\mu_1} [ \tilde F_1(t) - B(u(t),u(t))]+ \mu_1 t^{\mu_1-1}u(t).
\eeqs

(a) We estimate $|H_1(t)|_{\alpha,\sigma}$. Equation \eqref{Fcond}, for $N=1$, particularly reads
\beq\label{fphi1}
|f(t)- \phi_1 t^{-\mu_1}|_{\alpha,\sigma}=\mathcal O(t^{-\mu_1-\delta_1} ).
\eeq
It follows  that
\beq\label{fsure}
|f(t)|_{\alpha,\sigma}=\mathcal O(t^{-\mu_1}).
\eeq

Thanks to \eqref{fsure}, we can apply Theorem \ref{theo23} with $\varep=1/2$ and obtain from  inequalities \eqref{uep} and \eqref{Buep} that, as $t\to\infty$,
\begin{align}\label{newu}
 |u(t)|_{\alpha+1/2,\sigma}&=\mathcal O(t^{-\mu_1}),\\
 \label{newB}
 |B(u(t),u(t))|_{\alpha,\sigma}&=\mathcal O(t^{-2\mu_1}).
\end{align}

Combining estimates \eqref{fphi1},  \eqref{newu} and \eqref{newB}, we deduce that there exist $T_0>0$ and $D_0>0$ such that, for $t\ge 0$, 
\begin{align*}
(T_0+t)^{\mu_1} |\tilde F_1(T_0+t)|_{\alpha,\sigma}
&\le D_0(1+t)^{-\delta_1},\\
 (T_0+t)^{\mu_1-1}|u(T_0+t)|_{\alpha+1/2,\sigma} 
 &\le D_0 (1+t)^{-1 },\\
(T_0+t)^{\mu_1} |B(u(T_0+t),u(T_0+t))|_{\alpha,\sigma} 
&\le D_0 (1+t)^{-\mu_1} .
 \end{align*}
Thus, we have
\beqs%\label{ch4}
|H_1(T_0+t)|_{\alpha,\sigma} \le 3 D_0 (1+t)^{-\varep_1}\quad\forall t\ge0,
\eeqs
where 
\beq \label{dstar}
\varep_1=\min \{ \delta_1,\mu_1,1\}.
\eeq 

(b) Applying Lemma \ref{odelem} to equation \eqref{wj} in $G_{\alpha,\sigma}$ with solution $w_1(T_0+t)$, for $t\in [0,\infty)$, yields 
\beqs
|w_1(T_0+t)-A^{-1}\phi_1 |_{\alpha+1-\rho,\sigma}= \mathcal O (t^{-\varep_1})
\eeqs
for any $\rho \in(0,1)$, and consequently, 
\beqs
|w_1(t)-A^{-1}\phi_1 |_{\alpha+1-\rho,\sigma}= \mathcal O (t^{-\varep_1}).
\eeqs
Multiplying this equation by $t^{-\mu_1}$ yields
\beqs\label{rem1}
|u(t)-\xi_1  t^{-\mu_1}|_{\alpha+1-\rho,\sigma}= \mathcal O (t^{-\mu_1-\varep_1})\quad\forall\rho\in(0,1).
\eeqs
This proves that
\beq\label{st1}
\text{the statement \eqref{remdelta} holds true for $N=1$ with $\varep_1$ defined by \eqref{dstar}.} 
\eeq 

\medskip
\textit{Induction step.} Let $N$ be an integer with $1\le N<N_*$, and assume  there exists $\varep_N>0$ such that 
\beq\label{vNrate}
|v_N(t)|_{\alpha+1-\rho,\sigma}=\mathcal O(t^{-\mu_N-\varep_N})\quad \forall \rho \in (0,1). 
\eeq

\medskip
(a) Equation for $v_N$. Since $v_N'=u'-\bar u_N'$, we calculate $u'$ and $\bar u_N'$ in terms of the quantities that are more appropriate for the analysis of $v_N$.

$\bullet$  Calculating $u'$. By NSE,
\begin{align*}
u'&=-Au-B(u,u)+f(t)\\
&=-Av_N - A\bar u_N -B(\bar u_N+v_N,\bar u_N+v_N)+\bar F_N+F_{N+1}  +\tilde F_{N+1},\end{align*}
hence,
\beq \label{upr}
u'=-Av_N- A\bar u_N+\bar F_N-B(\bar u_N,\bar u_N)+ \phi_{N+1}t^{-\mu_{N+1}} +h_{N+1,1},
\eeq 
 where
\beqs
h_{N+1,1}=-B(\bar u_N,v_N)-B(v_N,\bar u_N)-B(v_N,v_N)+\tilde F_{N+1}.
\eeqs

Firstly, note that
\beqs%\label{Auf}
-A\bar u_N+\bar F_N= - \sum_{n=1}^N \frac1{t^{\mu_n}}\Big( A\xi_n-\phi_n\Big).
\eeqs

Secondly, we write
\begin{align*}
B(\bar u_N,\bar u_N)
&=\sum_{ m,j=1}^N t^{-\mu_m-\mu_j}B(\xi_m,\xi_j)\\
&=\sum_{n=1}^{N}\frac1{t^{\mu_n}}\Big( \sum_{\stackrel{1\le m,j\le N,}{\mu_m+\mu_j=\mu_n}}B(\xi_m,\xi_j)\Big)
+\frac1{t^{\mu_{N+1}}} \sum_{\stackrel{1\le m,j\le N,}{\mu_m+\mu_j=\mu_{N+1}}}B(\xi_m,\xi_j)
+h_{N+1,2},
\end{align*}
where
\beqs
h_{N+1,2}=\sum_{\stackrel{1\le m,j\le N,}{\mu_m+\mu_j\ge \mu_{N+2}}} t^{-\mu_m-\mu_j}B(\xi_m,\xi_j).
\eeqs

$\bullet$ Calculating $\bar u_N'$. Note that $\mu_N+1 \ge \mu_{N+1}$ and
$$ \{ \mu_p + 1 : 1 \le p \le N  \} \cap [\mu_1, \mu_{N+1}] \subset \{\mu_k: 1 \le k \le N+1\}.$$
Thus,
\beq\label{uNp}
-\bar u_N'=\sum_{p=1}^N \frac{\mu_p\xi_p}{t^{\mu_{p}+1}}=\sum_{n=1}^{N} \frac{\chi_n}{t^{\mu_n}}+\frac{\chi_{N+1}}{t^{\mu_{N+1}}}
+h_{N+1,3},
\eeq
where
\beqs
h_{N+1,3}=\sum_{\stackrel{1\le p\le N,}{\mu_{p}+1\ge \mu_{N+2}}} \frac{\mu_p\xi_p}{t^{\mu_{p}+1}}.
\eeqs

$\bullet$  Combining the above equations \eqref{upr}--\eqref{uNp} yields 
\begin{align*}
v_N'&=-Av_N +\frac1{t^{\mu_{N+1}}}\Big(- \sum_{\stackrel{1\le m,j\le N,}{\mu_m+\mu_j=\mu_{N+1}}}B(\xi_m,\xi_j)+\phi_{N+1}+\chi_{N+1}\Big)\\
&\quad - \sum_{n=1}^N \frac1{t^{\mu_n}}\Big( A\xi_n+\sum_{\stackrel{1\le m,j\le N,}{\mu_m+\mu_j=\mu_n}}B(\xi_m,\xi_j)-\phi_n  - \chi_n\Big)+h_{N+1,4},
\end{align*}
where
\beq\label{h4}
h_{N+1,4}=h_{N+1,1}-h_{N+1,2}+h_{N+1,3}.
\eeq

 Note, for $1\le n\le N+1$, that
 \beqs
 \sum_{\stackrel{1\le m,j\le N,}{\mu_m+\mu_j=\mu_n}}B(\xi_m,\xi_j)=\sum_{\stackrel{1\le  m,j \le n-1,}{\mu_m+\mu_j=\mu_n}}B(\xi_m,\xi_j).
 \eeqs 

Therefore,   one has, for $1\le n\le N$,
\beqs
 A\xi_n+\sum_{\stackrel{1\le m,j \le N,}{\mu_m+\mu_j=\mu_n}}B(\xi_m,\xi_j)-\phi_n-\chi_n =0,
\eeqs
and 
\beqs 
-\sum_{\stackrel{1\le m,j \le N,}{\mu_m+\mu_j=\mu_{N+1}}}B(\xi_m,\xi_j)+\phi_{N+1}+\chi_{N+1}=A\xi_{N+1}.
\eeqs

These yield
\beq\label{vNp}
v_N'=-Av_N + t^{-\mu_{N+1}}A\xi_{N+1}+h_{N+1,4}(t).
\eeq

(b) Define $w_{N+1}(t)=t^{\mu_{N+1}} v_N(t)$  for $t>0$. 
We have
\beqs%\label{wpr}
 w_{N+1}'=t^{\mu_{N+1}}v_N' + \mu_{N+1}t^{\mu_{N+1}-1} v_N,
\eeqs 
which, thanks to \eqref{vNp}, yields
\beq \label{wNeq}
w_{N+1}'=-Aw_{N+1}+A\xi_{N+1}+H_{N+1}(t),
\eeq
where
\beq\label{HN1}
H_{N+1}(t)
= t^{\mu_{N+1}}h_{N+1,4}(t)+ \mu_{N+1}t^{\mu_{N+1}-1} v_N(t).
\eeq

Let $\rho$ be any number in the interval $(0,1)$.

\medskip
(c) We estimate $H_{N+1}(t)$ now.
From \eqref{vNrate} and the fact $\mu_N+1\ge \mu_{N+1}$, it follows 
\beq\label{vNrate2}
t^{\mu_{N+1}-1}  | v_N(t)|_{\alpha+1-\rho,\sigma}=\mathcal O(t^{\mu_{N+1}-1}t^{-\mu_N-\varep_N})=\mathcal O(t^{-\varep_N}).
\eeq

By \eqref{Fcond}, we have
\beqs \label{Ftil}
t^{\mu_{N+1}}|\tilde F_{N+1}(t)|_{\alpha,\sigma}=\mathcal O(t^{-\delta_{N+1}}).
\eeqs

 Clearly,
\beq \label{ubu}
|\bar u_N(t)|_{\alpha+1,\sigma}=\mathcal O(t^{-\mu_1}).
\eeq

By inequality \eqref{AalphaB}, estimate \eqref{vNrate} for $\rho=1/2$, and \eqref{ubu}, it follows that
\begin{align*}
&t^{\mu_{N+1}}|B(v_N(t),\bar u_N(t))|_{\alpha,\sigma},\ 
t^{\mu_{N+1}}|B(\bar u_N(t),v_N(t))|_{\alpha,\sigma}
 =\mathcal O(t^{\mu_{N+1}}t^{-\mu_{N}-\varep_N} t^{-\mu_1})=\mathcal O(t^{-\varep_N}),\\
&t^{\mu_{N+1}}|B(v_N(t), v_N(t))|_{\alpha,\sigma}=\mathcal O(t^{\mu_{N+1}}t^{-\mu_N-\varep_N}t^{-\mu_N-\varep_N})=\mathcal O(t^{-\varep_N}).
\end{align*}
Above, we used the fact $2\mu_N\ge \mu_N+\mu_1\ge \mu_{N+1}$.
Hence,
\beqs %\label{h1}
 t^{\mu_{N+1}} |h_{N+1,1}(t)|_{\alpha,\sigma}=\mathcal O(t^{-\delta_{N+1}}) +\mathcal O(t^{-\varep_N}).
\eeqs

It is obvious that
\beqs
 t^{\mu_{N+1}}\sum_{\stackrel{1\le m,j\le N,}{\mu_m+\mu_j\ge  \mu_{N+2}}} t^{-\mu_m}t^{-\mu_j}|B(\xi_m,\xi_j)|_{\alpha,\sigma} 
=\mathcal O(t^{-(\mu_{N+2}-\mu_{N+1})}),
\eeqs
and thus, 
\beqs %\label{h2}
 t^{\mu_{N+1}} |h_{N+1,2}(t)|_{\alpha,\sigma}=\mathcal O(t^{-(\mu_{N+2}-\mu_{N+1})}).
\eeqs

It is also clear that 
\beqs %\label{h3}
 t^{\mu_{N+1}} |h_{N+1,3}(t)|_{\alpha,\sigma}=\mathcal O(t^{-(\mu_{N+2}-\mu_{N+1})}).
\eeqs

Combining these estimates of $t^{\mu_{N+1}}h_{N+1,j}(t)$ for $j=1,2,3$, with \eqref{HN1}, \eqref{h4} and \eqref{vNrate2} gives %, \eqref{vNrate2}, \eqref{HN1}, \eqref{h1}, \eqref{h2}, and \eqref{h3} that
\beq \label{HNt-est}
|H_{N+1}(t)|_{\alpha,\sigma}=\mathcal O(t^{-\delta_{N+1}})+\mathcal O(t^{-\varep_N})+\mathcal O(t^{-(\mu_{N+2}-\mu_{N+1})})
=\mathcal O(t^{-\varep_{N+1}}),
\eeq
where
\beq\label{bN}
\varep_{N+1}=\min\{ \delta_{N+1},\varep_N,\mu_{N+2}-\mu_{N+1}\}.
\eeq

(d) Note that from Lemma \ref{xireg} that $A\xi_{N+1}\in G_{\alpha,\sigma}$. 
%Let $\varep$ be any positive number with $\varep<\min\{1/2,\varep_{N+1}^*\}$.
By applying Lemma \ref{odelem} to equation \eqref{wNeq} and solution $w_{N+1}(T_1+t)$ for some sufficiently large $T_1>0$ with the use of \eqref{HNt-est}, we obtain 
\beqs
|w_{N+1}(T_1+t)-A^{-1}(A\xi_{N+1})|_{\alpha+1-\rho,\sigma}\le  C(1+t)^{-\varep_{N+1}}\quad\forall t\ge 1.
\eeqs
Thus,
\beqs
|w_{N+1}(t)-\xi_{N+1}|_{\alpha+1-\rho,\sigma}=  \mathcal O(t^{-\varep_{N+1}}).
\eeqs

Multiplying this equation by $t^{-\mu_{N+1}}$ yields
\beqs%\label{vNidea}
|v_N(t)- \xi_{N+1}t^{-\mu_{N+1}}|_{\alpha+1-\rho,\sigma}=\mathcal O(t^{-\mu_{N+1}-\varep_{N+1}}),
\eeqs
that is,
\beqs
|v_{N+1}(t)|_{\alpha+1-\rho,\sigma}=\mathcal O(t^{-\mu_{N+1}-\varep_{N+1}}).
%\quad\forall \varep\in(0,\min\{1/2,\varep_{N+1}^*\}),
\eeqs
This proves 
\beq\label{stN} 
\text{the statement \eqref{remdelta} holds for $N:=N+1$ with  $\varep_{N+1}$ defined by \eqref{bN}.}
\eeq 

By the induction principle, we have \eqref{remdelta} holds true for all $N=1,2,\ldots,N_*$. This completes the proof of \eqref{remdelta}.

\medskip
(ii) We now prove \eqref{ufinite}.

\smallskip
\textit{Case $N_*=1$.} We have in this case, thanks to \eqref{delN}, $\delta_1=\delta$ and $\varep_1$ in \eqref{dstar} equals $\varep_*$ in \eqref{epstar}. Thus, the statement \eqref{ufinite} just follows \eqref{st1}.

\smallskip
\textit{Case $N_*\ge 2$.} Similar to \eqref{frate}, one has from \eqref{remdelta}, for $N=N_*$, that 
\beq\label{vminus}
|v_{N_*-1}(t)|_{\alpha+1-\rho,\sigma}=\mathcal O(t^{-\mu_{N_*}}).
\eeq

We repeat the induction step in part (i) for $N=N_*-1$, but now with 
$$\delta_{N+1}=\delta_{N_*}=\delta\text{ and, thanks to \eqref{vminus}, }\varep_N=\varep_{N_*-1}=\mu_{N_*}-\mu_{N_*-1}.$$

Then one obtains from \eqref{stN} that
\beqs
|v_{N_*}(t)|_{\alpha+1-\rho,\sigma}=\mathcal O(t^{-\mu_{N_*}-\varep_{N_*}}), 
\eeqs
where, according to formula \eqref{bN}, $\varep_{N_*}=\varep_{N+1}=\min\{ \delta,\mu_{N_*}-\mu_{N_*-1},\mu_{N_*+1}-\mu_{N_*}\}$ which exactly is  $\varep_*$.
This completes our proof.
\end{proof}

\subsection{Infinite expansions}\label{infinitesub}

We focus, in this subsection, the case when the force has an infinite expansion, and obtain an infinite expansion for any Leray-Hopf weak solution of the NSE. The force's expansion considered will be of the following type.

\textbf{(B2)} \textit{Suppose there exist  real numbers $\sigma\ge 0$, $\alpha\ge 1/2$,  a strictly increasing, divergent sequence of positive numbers $(\gamma_n)_{n=1}^\infty$ and a sequence of functions  $(\psi_n)_{n=1}^\infty$ in $G_{\alpha,\sigma}$ such that, in the sense of Definition \ref{expanddef},
\beq\label{fseq}
f(t)\polsim \sum_{n=1}^\infty \psi_n t^{-\gamma_n} \text{ in }G_{\alpha,\sigma}.
\eeq
}

Similar to the previous subsection, the appropriate set of powers generated  by $\gamma_n$'s and $1$ is  
\begin{align*}
S_\infty=\Big \{  (\sum_{j=1}^p \gamma_{n_j}) + k: \text{ for some } p\ge 1,\ n_1,n_2,\ldots,n_p\ge 1, 
\text{ and some integer } k\ge 0\Big  \}. 
\end{align*}

Then $S_\infty\subset (0,\infty)$, and property \eqref{Sprop} still holds with $S_\infty$ replacing $S_*$.
Since $\gamma_n\to\infty$ as $n\to\infty$, we can order $S_\infty$, and denote
\beq 
S_\infty=\{ \mu_n:  n\in \N\} \text{ with $\mu_n$'s being strictly increasing.}
\eeq
(This is possible by ordering finitely many elements in each set $S_\infty\cap (n-1,n]$, for all $n\in\N$.)
Note that $\mu_n\to\infty$ as $n\to\infty$.

Then rewrite \eqref{fseq} as
\beq \label{fexp2}
f(t)\polsim \sum_{n=1}^{\infty} \phi_n t^{-\mu_n }\quad \text{in }G_{\alpha,\sigma}\quad\text{as } t\to\infty,
\eeq
where the sequence $(\phi_n)_{n=1}^\infty$ in $G_{\alpha,\sigma}$ is defined by $\phi_n=\psi_k$ if there exists $k\ge 1$ such that $\mu_n=\gamma_k$, and $\phi_n=0$ otherwise.

By the same arguments as in subsection \ref{finsubsec}, the  estimate \eqref{frate} now holds for all $N\geq1$.
% that
% \beq\label{fep}
% \Big|f(t)-\sum_{n=1}^N \phi_n t^{-\mu_n}\Big|_{\alpha,\sigma}
% =\mathcal O(t^{-\mu_{N+1}}) \quad\text{as }t\to\infty.
% \eeq

% Similarly,  for any $N\ge 1$,
%  \beq\label{uqa}
% \Big |u(t)-\sum_{n=1}^N \xi_n t^{-\mu_n}\Big |_{\alpha,\sigma} =\mathcal O(t^{-\mu_{N+1}} )\quad\text{as } t\to\infty.
%  \eeq

\begin{theorem} \label{mainthm}
Assume {\rm (B2)} and the corresponding expansion \eqref{fexp2}.
Then any Leray-Hopf weak solution $u(t)$  of \eqref{fctnse} and \eqref{uzero}
has the asymptotic expansion
 \beq\label{uexpand}
u(t)\polsim  \sum_{n=1}^\infty  \xi_n t^{-\mu_n}\quad \text{in }G_{\alpha+1-\rho,\sigma},\quad \forall \rho \in (0,1),
 \eeq
 where $\xi_n$ is defined by \eqref{xi1} for $n=1$, and by \eqref{xin1} for $n\ge 2$.
  More precisely, one has for any $N\ge 1$ that
 \beq\label{urem}
\Big |u(t)- \sum_{n=1}^N  \xi_n t^{-\mu_n}\Big|_{\alpha+1-\rho,\sigma}=\mathcal O(t^{-\mu_{N+1}})\quad \text{ as }t\to\infty,
\quad \forall \rho \in (0,1).
 \eeq
\end{theorem}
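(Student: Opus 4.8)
The plan is to derive Theorem~\ref{mainthm} from the finite-expansion result, Theorem~\ref{finitetheo}, by a truncation argument; there is essentially no new analysis to carry out, only some bookkeeping to check that the two setups are compatible. Fix $N\ge 1$. The goal is the remainder estimate \eqref{urem}, since then \eqref{uexpand} is immediate from Definition~\ref{expanddef}(b) upon taking $\beta_N=\mu_{N+1}>\mu_N$.

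First I would choose a truncation level for the force. Because $(\gamma_n)_{n=1}^\infty$ diverges while $\mu_{N+1}$ is a fixed number, there is an integer $N_0\ge 1$ with $\gamma_{N_0}\ge \mu_{N+1}$. Applying Definition~\ref{expanddef}(b) to the expansion \eqref{fseq} at index $N_0$ furnishes $\beta_{N_0}>\gamma_{N_0}$ with
\[
\Big| f(t)-\sum_{n=1}^{N_0}\psi_n t^{-\gamma_n}\Big|_{\alpha,\sigma}=\mathcal O(t^{-\beta_{N_0}})\quad\text{as }t\to\infty,
\]
so hypothesis {\rm (B1)} holds with this $N_0$ and $\delta:=\beta_{N_0}-\gamma_{N_0}>0$, and all the objects $S$, $N_*$, $\mu_n$, $\phi_n$, $\xi_n$ of subsection~\ref{finsubsec} become available.

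The one point that needs care is to verify that these finite objects coincide, on the range of indices that matters, with the ones defined from $S_\infty$ in the present subsection. The key elementary observation is that any element $\sum_{j=1}^p\gamma_{n_j}+k$ of $S_\infty$ which is $\le \gamma_{N_0}$ cannot involve a $\gamma_m$ with $m>N_0$, for otherwise $\gamma_m\le\gamma_{N_0}$, contradicting the strict monotonicity of $(\gamma_n)$. Hence $S_\infty\cap[\gamma_1,\gamma_{N_0}]$ equals the finite set $S$ generated by $\gamma_1,\dots,\gamma_{N_0}$ and $1$; consequently the ordered powers $\mu_n$ agree for $1\le n\le N_*$, the functions $\phi_n$ agree for $1\le n\le N_*$, and the recursions \eqref{xi1}--\eqref{xin1}, which reference only $\phi_1,\dots,\phi_n$ and $\mu_1,\dots,\mu_n$, therefore produce the same $\xi_n$. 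Moreover, since $\gamma_{N_0}\ge\mu_{N+1}$, the set $S_\infty\cap[\gamma_1,\gamma_{N_0}]$ contains $\mu_1,\dots,\mu_{N+1}$, so $N_*\ge N+1$.

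Finally I would invoke Theorem~\ref{finitetheo}: for every $\rho\in(0,1)$,
\[
\Big| u(t)-\sum_{n=1}^{N_*}\xi_n t^{-\mu_n}\Big|_{\alpha+1-\rho,\sigma}=\mathcal O(t^{-\mu_{N_*}-\varep_*})\quad\text{as }t\to\infty.
\]
Since $N_*\ge N+1$ and $\mu_{N_*}\ge\mu_{N+1}$, splitting off the tail $\sum_{n=N+1}^{N_*}\xi_n t^{-\mu_n}=\mathcal O(t^{-\mu_{N+1}})$ and using the triangle inequality yields exactly \eqref{urem}; as $N\ge 1$ was arbitrary, \eqref{uexpand} follows. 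The only obstacle worth flagging is the consistency check in the third paragraph — that truncating $f$ at $N_0$ of its original terms does not disturb the data $\mu_n,\phi_n,\xi_n$ for indices up to $N_*$ — and this reduces to the monotonicity remark about $S_\infty$; everything else is a direct application of Theorem~\ref{finitetheo}.
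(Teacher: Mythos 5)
Your proposal is correct and follows essentially the same route as the paper: reduce to the finite-expansion case by truncating the force, apply Theorem~\ref{finitetheo} with $N_*$ large enough that $\mu_{N_*}\ge\mu_{N+1}$, and split off the tail to get \eqref{urem}, from which \eqref{uexpand} is immediate. The only difference is that you spell out the consistency check that $S_\infty\cap[\gamma_1,\gamma_{N_0}]$, the $\mu_n$'s, $\phi_n$'s and $\xi_n$'s from the two subsections agree on the relevant index range — a point the paper dispatches with ``Clearly'' — and your verification of it is sound.
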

\begin{proof}
Clearly, $f$ satisfies condition (B1)  for any $N_0\ge 1$, and \eqref{ffinite}
 for any $N_*\ge 1$.
Hence, applying Theorem \ref{finitetheo} for each $N_*\ge 1$, we obtain the expansion \eqref{uexpand} for $u(t)$. 
Then similar to \eqref{frate}, we obtain from \eqref{uexpand} that \eqref{urem} holds for all $N\ge 1$.
\end{proof}

\section{Properties of the expansions}
\label{furthersec}

According to Theorem \ref{mainthm}, for each force $f$ satisfying the required conditions, there exists a unique sequence $(\xi_n)_{n=1}^\infty$ such that the expansion \eqref{uexpand} holds for all Leray-Hopf weak solution $u(t)$. The first part of this section investigates the range of $(\xi_n)_{n=1}^\infty$ when the force $f$ varies.

\medskip
Below, we focus on the infinite expansions in section \ref{infinitesub}, with  $\gamma_n=n$ for all $n\in\N$ in Assumption (B2), which implies that $\mu_n=n$ for all $n\in\N$. 
In this case, \eqref{fexp2} and \eqref{uexpand} read as
\beq\label{fncase}
f(t)\polsim \sum_{n=1}^\infty \phi_n t^{-n} \quad \text{in }G_{\alpha,\sigma},
\eeq
\beq\label{uncase}
u(t)\polsim \sum_{n=1}^\infty \xi_n t^{-n} \quad \text{in }G_{\alpha+1-\rho,\sigma}, \quad \forall\rho\in(0,1),
\eeq
where $\phi_n$'s and $\xi_n$'s, referring to \eqref{xi1} and \eqref{xin1}, are related by 
\beq\label{xincase}
\left\{
\begin{aligned}
& \xi_1=A^{-1} \phi_1, \\ 
 &\xi_n=A^{-1}\Big[ \phi_n +(n-1)\xi_{n-1} - \sum_{k=1}^{n-1}B(\xi_k,\xi_{n-k})\Big], \quad n \ge 2.
 \end{aligned}
 \right.
 \eeq
Above, we used the fact that $\chi_n$ given by \eqref{chin} now is $(n-1)\xi_{n-1}$ for all $n\ge 2$.

We recall note (b) after Theorem \ref{finitetheo} that the relation between $(\phi_n)_{n=1}^\infty$ and $(\xi_n)_{n=1}^\infty$ in \eqref{xincase} is one-to-one, and \eqref{phisolve} now reads as  
\beq\label{phi-xi}
\left\{
\begin{aligned}
& \phi_1=A \xi_1, \\ 
 &\phi_n=A \xi_n -(n-1)\xi_{n-1} + \sum_{k=1}^{n-1}B(\xi_k,\xi_{n-k}), \quad n \ge 2.
 \end{aligned}
 \right.
 \eeq

The following proposition gives a sufficient condition for $(\xi_n)_{n=1}^\infty$ so that $\sum_{n=1}^{\infty} \xi_n t^{-n}$ is an expansion of a Leray-Hopf weak solution with some force $f$ as in \eqref{fncase}.

\begin{theorem}\label{conv-series}
Let $(c_n)_{n=1}^\infty$ be a sequence of non-negative numbers such that 
\beq\label{dn}
\sum_{n=2}^\infty n d_n<\infty,\text{ where } d_n= \max\{ c_k c_{n-k}: 1\le k\le n-1\}.
\eeq

Given $\alpha\ge 1/2$ and $\sigma\ge 0$.
Suppose  $(\zeta_n)_{n=1}^{\infty}$ is a sequence in $G_{\alpha+1,\sigma}$ such that
\beq \label{xin-dn}
|\zeta_n|_{\alpha+1,\sigma}\le c_n \quad\forall n\in\N. 
\eeq

Then there exists a forcing function $f(t)$ such that any Leray-Hopf weak solution $u(t)$ of \eqref{fctnse} and \eqref{uzero} satisfies 
\beq\label{xran2}
 u(t) \polsim \sum_{n=1}^{\infty} \zeta_n t^{-n} \quad \text{ in } G_{\alpha+1-\rho,\sigma},\quad\forall \rho\in(0,1).
\eeq
Moreover, the series of the expansion, $\sum_{n=1}^{\infty} \zeta_n t^{-n}$, converges in $G_{\alpha+1,\sigma}$ absolutely and uniformly in $[1,\infty)$.
\end{theorem}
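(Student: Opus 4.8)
The plan is to \emph{construct} the force by running the recursion between $(\phi_n)$ and $(\xi_n)$ backwards. Set $\xi_n:=\zeta_n$ for every $n$ and \emph{define} $\phi_1:=A\zeta_1$ and, for $n\ge2$,
\[
\phi_n:=A\zeta_n-(n-1)\zeta_{n-1}+\sum_{k=1}^{n-1}B(\zeta_k,\zeta_{n-k}),
\]
which is formula \eqref{phi-xi} read as a definition. Each $\phi_n$ lies in $G_{\alpha,\sigma}$: indeed $A\zeta_n\in G_{\alpha,\sigma}$ because $\zeta_n\in G_{\alpha+1,\sigma}$, and $B(\zeta_k,\zeta_{n-k})\in G_{\alpha+1/2,\sigma}\subset G_{\alpha,\sigma}$ by \eqref{AalphaB}. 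The aim is then to show $\sum_{n\ge1}|\phi_n|_{\alpha,\sigma}<\infty$; granting this, the function $f(t):=\sum_{n=1}^\infty\phi_n t^{-n}$, defined for $t\ge1$ and extended arbitrarily (say $f(t):=f(1)$) on $[0,1)$, belongs to $L^\infty_{\rm loc}([0,\infty),G_{\alpha,\sigma})\subset L^\infty_{\rm loc}([0,\infty),H)$, so Assumption (A) holds, and for any $N$ and all $t\ge1$,
\[
\Big|f(t)-\sum_{n=1}^N\phi_n t^{-n}\Big|_{\alpha,\sigma}=\Big|\sum_{n>N}\phi_n t^{-n}\Big|_{\alpha,\sigma}\le t^{-(N+1)}\sum_{n>N}|\phi_n|_{\alpha,\sigma}=\mathcal O(t^{-(N+1)}),
\]
so $f$ satisfies Assumption (B2) with $\gamma_n=n$ (hence $\mu_n=n$) and $\psi_n=\phi_n$; this is exactly \eqref{fncase}.

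With $f$ in hand, I would invoke Theorem \ref{mainthm}: every Leray--Hopf weak solution $u(t)$ of \eqref{fctnse}, \eqref{uzero} satisfies $u(t)\polsim\sum_n\xi_n t^{-n}$ in $G_{\alpha+1-\rho,\sigma}$ for all $\rho\in(0,1)$, where the $\xi_n$ are the ones generated by the recursion \eqref{xincase} from the $\phi_n$ just built. But \eqref{xincase} and \eqref{phi-xi} are mutually inverse (cf.\ note (b) after Theorem \ref{finitetheo}), and our $\phi_n$ came from $\zeta_n$ through \eqref{phi-xi}; therefore the recursion returns $\xi_n=\zeta_n$, and the expansion of $u$ is precisely $\sum_n\zeta_n t^{-n}$, i.e.\ \eqref{xran2}. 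For the convergence assertion, $|\zeta_n t^{-n}|_{\alpha+1,\sigma}\le c_n$ for $t\ge1$, so once $\sum_n c_n<\infty$ is known the Weierstrass $M$-test yields the absolute and uniform convergence of $\sum_n\zeta_n t^{-n}$ in $G_{\alpha+1,\sigma}$ on $[1,\infty)$.

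The substance of the proof, and the step I expect to be the main obstacle, is the estimate $\sum_n|\phi_n|_{\alpha,\sigma}<\infty$. Using $|Av|_{\alpha,\sigma}=|v|_{\alpha+1,\sigma}$, the monotonicity of $|\cdot|_{\alpha,\sigma}$ in the first index, \eqref{AalphaB} together with $|\cdot|_{\alpha+1/2,\sigma}\le|\cdot|_{\alpha+1,\sigma}$, and the hypothesis \eqref{xin-dn}, one obtains
\[
|\phi_1|_{\alpha,\sigma}\le c_1,\qquad |\phi_n|_{\alpha,\sigma}\le c_n+(n-1)c_{n-1}+(n-1)K^\alpha d_n\quad(n\ge2).
\]
The series $\sum_{n\ge2}(n-1)K^\alpha d_n$ converges by hypothesis \eqref{dn}. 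The other two pieces require $\sum_n n c_n<\infty$, which is not assumed outright but is forced by \eqref{dn}: setting aside the trivial case where all $c_n=0$ (take $f\equiv0$), let $m_0$ be the smallest index with $c_{m_0}>0$; then $d_{m_0+m}\ge c_{m_0}c_m$ for every $m\ge1$, so $\infty>\sum_{n\ge2}n d_n\ge c_{m_0}\sum_{m\ge1}m c_m$, whence $\sum_m m c_m<\infty$ (in particular $\sum_m c_m<\infty$, which closes the convergence claim). Combining, $\sum_n|\phi_n|_{\alpha,\sigma}<\infty$, and the remaining steps go through as above. I anticipate no further difficulty: hypothesis \eqref{dn} is designed precisely to make the bilinear terms $B(\zeta_k,\zeta_{n-k})$ in \eqref{phi-xi} summable after multiplication by $n$, while the linear term $(n-1)\zeta_{n-1}$ coming from $\chi_n$ is absorbed by the $\sum_n n c_n<\infty$ just extracted.
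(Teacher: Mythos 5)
Your proposal is correct and follows essentially the same route as the paper: you run the recursion \eqref{phi-xi} backwards to define the $\phi_n$'s, extract $\sum_n n c_n<\infty$ from \eqref{dn} via the smallest nonzero $c_{m_0}$ exactly as in \eqref{series-cn}, bound $|\phi_n|_{\alpha,\sigma}\le c_n+(n-1)c_{n-1}+K^\alpha(n-1)d_n$, build $f$ as the resulting convergent series, and invoke Theorem \ref{mainthm} together with the invertibility of the recursion to conclude $\xi_n=\zeta_n$. No gaps.
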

\begin{proof}
In case $c_n=0$ for all $n$, then $\zeta_n=0$ for all $n$. We simply take $f=0$, which gives $\phi_n=0$ for all $n$.
Then we have expansion \eqref{uncase}, where the $\xi_n$'s are given by \eqref{xincase}, which obviously yields $\xi_n=0=\zeta_n$ for all $n$. Hence \eqref{xran2} follows \eqref{uncase}.

We now consider the case that there exists $n_0\in\N$ such that $c_{n_0}>0$. 
Clearly, from the definition of $d_n$ one has   $c_{n_0}c_{n} \le d_{n+n_0}$ for $n \ge 1$. Using this and \eqref{dn} yield
\beq \label{series-cn}
 \sum_{n=1}^\infty  c_n \le  \sum_{n=1}^\infty n c_n \le \sum_{n=1}^\infty n \frac{d_{n+n_0}}{c_{n_0}}  \le \frac{1}{c_{n_0}} \sum_{n=1}^\infty (n+n_0) d_{n+n_0} < \infty.
\eeq

Define
\beq\label{phi-zeta}
\left\{
\begin{aligned}
& \phi_1=A \zeta_1, \\ 
&\phi_n=A \zeta_n -(n-1)\zeta_{n-1} + \sum_{k=1}^{n-1}B(\zeta_k,\zeta_{n-k}), \quad n \ge 2.
 \end{aligned}
\right.
 \eeq
(This, in fact, is the construction of $\phi_n$'s in \eqref{phi-xi} with $\xi_n$'s being replaced with $\zeta_n$'s.)

We estimate, for $n=1$,
$$|\phi_1|_{\alpha,\sigma}=|A\zeta_1|_{\alpha,\sigma}=|\zeta_1|_{\alpha+1,\sigma}\le c_1.$$

For $n\ge 2$, we have from \eqref{phi-zeta} and \eqref{AalphaB} that
\begin{align*}
 |\phi_n|_{\alpha,\sigma} 
 &\le |A \zeta_n|_{\alpha,\sigma} +(n-1)|\zeta_{n-1}|_{\alpha,\sigma} + \sum_{k=1}^{n-1}|B(\zeta_k,\zeta_{n-k})|_{\alpha,\sigma}\\
 &\le | \zeta_n|_{\alpha+1,\sigma} +(n-1)|\zeta_{n-1}|_{\alpha+1,\sigma} + K^\alpha \sum_{k=1}^{n-1}|\zeta_k|_{\alpha+1/2,\sigma}|\zeta_{n-k}|_{\alpha+1/2,\sigma}.
\end{align*}
Then, by \eqref{xin-dn},
\begin{align*}
 |\phi_n|_{\alpha,\sigma} 
  &\le  c_n +  (n-1)c_{n-1}+ K^\alpha  \sum_{k=1}^{n-1}c_kc_{n-k} \\
 &\le    c_n +  (n-1)c_{n-1} + K^\alpha (n-1)d_n < \infty.
\end{align*}
Therefore, by \eqref{series-cn} and \eqref{dn},
\beqs %\label{seri-phin}
\sum_{n=1}^\infty |\phi_n|_{\alpha,\sigma} \le \sum_{n=1}^\infty c_n + \sum_{n=1
}^\infty n c_n +  K^{\alpha}\sum_{n=2}^\infty(n-1)d_n < \infty.
\eeqs

It follows that $\sum_{n=1}^{\infty} \phi_n t^{-n}$ converges in $G_{\alpha,\sigma}$ absolutely and uniformly on $[1,\infty)$. Thus, we can define $f(t)$ for $t\ge 0$ as following:
\beq
f(t)=\begin{cases}
\sum_{n=1}^{\infty} \phi_n&\text{ if }   0\le t<1,\\   
\sum_{n=1}^{\infty} \phi_n t^{-n}&\text{ if }   t\ge 1.    
     \end{cases}
\eeq 

Clearly, $f$ satisfies (A), and $f(t)\polsim \sum_{n=1}^{\infty} \phi_n t^{-n}$ in $G_{\alpha,\sigma}$, hence $f$ satisfies (B2) too.

Let $u$ be a Leray-Hopf weak solution of \eqref{fctnse} and \eqref{uzero}. Applying Theorem \ref{mainthm} gives the expansion \eqref{uncase} for $u(t)$, where the $\xi_n$'s are given by \eqref{xincase}. 

Solving for $\zeta_n$'s from \eqref{phi-zeta} gives
\beq\label{zetan}
\left\{
\begin{aligned}
& \zeta_1=A^{-1} \phi_1, \\ 
 &\zeta_n=A^{-1}\Big[ \phi_n +(n-1)\zeta_{n-1} - \sum_{k=1}^{n-1}B(\zeta_k,\zeta_{n-k})\Big], \quad n \ge 2.
 \end{aligned}
 \right.
 \eeq

Comparing \eqref{xincase} and \eqref{zetan} shows $\xi_n=\zeta_n$ for all $n\in\N$. Therefore, \eqref{xran2} follows \eqref{uncase}. 

By \eqref{xin-dn}, we have, for all $t  \ge 1$ and $n \ge 1$, that
\beq 
|\zeta_n t^{-n}|_{\alpha+1,\sigma} \le |\zeta_n |_{\alpha+1,\sigma} \le c_n.
\eeq
This estimate and  \eqref{series-cn} imply that  $\sum_{n=1}^{\infty} \zeta_n t^{-n}$ converges in $G_{\alpha+1,\sigma}$ absolutely and uniformly  on $[1,\infty)$. 
The proof is complete.
\end{proof}

\begin{example}\label{cn}
In Theorem \ref{conv-series}, assume there exist $M>0$, $\lambda>2$ and  $N_0\ge 1$ such that $c_n\le M n^{-\lambda}$ for all $n\ge N_0$. Then the sequence $(c_n)_{n=1}^\infty$ is bounded by, say, a number $c_*>0$. Let $n\ge 2N_0$.
If $1\le k\le n/2$, then $n-k\ge n/2\ge N_0$ and 
$$c_k c_{n-k}\le c_*c_{n-k}\le c_* M(n-k)^{-\lambda}\le c_* M(n/2)^{-\lambda}.$$
If $n/2<k<n$, then $k> N_0$ and 
$$c_k c_{n-k}\le c_k c_*\le c_* M k^{-\lambda}\le c_* M(n/2)^{-\lambda}.$$
Hence, $d_n\le  2^{\lambda} M n^{-\lambda}$ for all $n\ge 2N_0$. 
Therefore, condition \eqref{dn} is satisfied.

\end{example}

As a special case of Theorem \ref{conv-series}, the next corollary shows that the expansion of $u(t)$, essentially, can  be any finite sum in $G_{\alpha+1,\sigma}$ (of course, of the same type.)

\begin{corollary}\label{finite}
Let  $\alpha\ge 1/2$, $\sigma\ge 0$ be given numbers, and $\zeta_1,\ldots,\zeta_N$ be given elements in $G_{\alpha+1,\sigma}$, for some $N\ge 1$. 
Then there exists a forcing function $f(t)$ such that any Leray-Hopf weak solution $u(t)$ of \eqref{fctnse} and \eqref{uzero} has the expansion $u(t)\polsim \sum_{n=1}^{\infty} \xi_n t^{-n}$ in $G_{\alpha+1-\rho,\sigma}$ for all $\rho\in(0,1)$, where $\xi_n=\zeta_n$ for $1\le n\le N$ and $\xi_n=0$ for all $n>N$. 
 \end{corollary}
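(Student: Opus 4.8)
The plan is to deduce Corollary \ref{finite} directly from Theorem \ref{conv-series} by exhibiting a suitable sequence $(c_n)_{n=1}^\infty$ and the finite sequence $(\zeta_n)_{n=1}^\infty$ (extended by zero) so that the hypotheses \eqref{dn} and \eqref{xin-dn} are met. Concretely, given the finitely many elements $\zeta_1,\dots,\zeta_N\in G_{\alpha+1,\sigma}$, I would set $\zeta_n=0$ for all $n>N$, and choose $c_n=|\zeta_n|_{\alpha+1,\sigma}$ for $1\le n\le N$ and $c_n=0$ for $n>N$. With this choice, \eqref{xin-dn} holds by construction (with equality for $n\le N$ and trivially for $n>N$).

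Next I would check condition \eqref{dn}. Since $c_n=0$ for $n>N$, for any $n$ and any $1\le k\le n-1$ the product $c_kc_{n-k}$ vanishes as soon as either $k>N$ or $n-k>N$; in particular $d_n=0$ whenever $n>2N$. Hence the series $\sum_{n=2}^\infty n\,d_n$ is a finite sum and is therefore trivially convergent, so \eqref{dn} is satisfied. (One may alternatively invoke Example \ref{cn}, since a finitely supported sequence certainly satisfies $c_n\le Mn^{-\lambda}$ for $n\ge N_0$ with, say, $N_0=N+1$ and any $\lambda>2$, but the direct observation that $d_n$ is eventually zero is cleaner.)

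With both hypotheses verified, Theorem \ref{conv-series} applies and produces a forcing function $f(t)$ such that every Leray-Hopf weak solution $u(t)$ of \eqref{fctnse} and \eqref{uzero} satisfies $u(t)\polsim\sum_{n=1}^\infty \zeta_n t^{-n}$ in $G_{\alpha+1-\rho,\sigma}$ for all $\rho\in(0,1)$. By the identification $\xi_n=\zeta_n$ made in the proof of Theorem \ref{conv-series} (where $\xi_n$ is generated from $f$ via \eqref{xincase}), this is exactly the claimed expansion $u(t)\polsim\sum_{n=1}^\infty \xi_n t^{-n}$ with $\xi_n=\zeta_n$ for $1\le n\le N$ and $\xi_n=0$ for $n>N$. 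Since the expansion is in fact a finite sum, the convergence assertion of Theorem \ref{conv-series} is automatic here and needs no separate comment.

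There is essentially no obstacle: the entire content is packaged in Theorem \ref{conv-series}, and the only thing to be careful about is that the chosen $(c_n)$ genuinely satisfies the summability condition \eqref{dn} — which, as noted, is immediate because $d_n$ vanishes for $n>2N$ — and that the finite sequence is correctly extended by zeros so that \eqref{xin-dn} holds for all $n\in\N$. The proof is thus a short verification followed by an application of the theorem.
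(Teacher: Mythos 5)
Your proposal is correct and matches the paper's own proof essentially verbatim: extend $(\zeta_n)$ by zeros, take $c_n=|\zeta_n|_{\alpha+1,\sigma}$ for $n\le N$ and $c_n=0$ otherwise, observe $d_n=0$ for $n>2N$ so that \eqref{dn} holds trivially, and apply Theorem \ref{conv-series}. No issues.
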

\begin{proof}
For $n>N$, set $\zeta_n=0$. Define $c_n=|\zeta_n|_{\alpha+1,\sigma}$ for $1\le n\le N$, and $c_n=0$ for $n>N$.
Let $d_n$ be defined as in \eqref{dn}. One can verify that $d_n=0$ for all $n>2N$.
Hence, the condition \eqref{dn} is satisfied. Then the conclusion of this corollary follows Theorem \ref{conv-series}.
(In fact, following the construction of $f(t)$, we have $\phi_n=0$ for $n>2N$, and $f(t)$,  for $t\ge 1$, is simply the finite sum $\sum_{n=1}^{2N}\phi_n t^{-n}$.)
\end{proof}

\begin{example}[Divergent expansions]
 For a given and fixed force $f$, the expansion \eqref{uexpand} may not converge. We give here a simple example. Let $\phi_1\ne 0$ be a function in $R_1H$ such that $B(\phi_1,\phi_1)=0$, and let $\phi_n=0$ for all $n\ge 2$. (For e.g., $\phi_1(\vecx)=\varep \vece_2[e^{i\vece_1\cdot \vecx}+e^{-i\vece_1\cdot \vecx}]$ for any $\varep>0$.)
 From \eqref{xincase}, one can easily verify that $\xi_n=(n-1)!\phi_1$ for all $n\in\N$. Hence, the expansion $\sum_{n=1}^\infty \xi_n t^{-n}$ is divergent in $H$ for all $t>0$.
\end{example}

\begin{remark}\label{normalcompare}
We recall the normalization map for NSE, in case the force is potential, defined by Foias and Saut \cite{FS87,FS91}.
First, rewrite $\sigma(A)=\{\lambda_n:n\in\N\}$, where $\lambda_n$ is strictly increasing.
For any $u^0\in V$ such that the solution $u(t)$ of \eqref{fctnse} and \eqref{uzero} is regular on $[0,\infty)$, there exists
$\xi=(\xi_n)_{n=1}^\infty$, with $\xi_n\in R_{\lambda_n}H$ for all $n\in\N$, such that the expansion \eqref{expand} of $u(t)$ can be reconstructed explicitly based on $\xi$ only, i.e., $q_n(t)=q_n(t,\xi)$ -- a polynomial in both $t$ and $\xi$. The normalization map is defined as $W(u^0)=\xi= (\xi_n)_{n=1}^\infty$.
Thus, regarding the reconstructions of the asymptotic expansions for solutions, the sequence $(\xi_n)_{n=1}^\infty$ in \eqref{uncase} and $W(u^0)$ have the similar roles, because they totally determine the  expansions \eqref{uncase} and \eqref{expand}, respectively. We now briefly compare (a) their ranges, and (b) the convergence of their generated expansions.

Regarding (a), it is known that given \textit{small} elements $\zeta_n\in R_{\lambda_n} H$, for $n=1,2,\ldots,N$, there exists $u^0$ such that $W(u^0)= (\xi_n)_{n=1}^\infty$ with
$\xi_n=\zeta_n$ for $1\le n\le N$.
However, it is not known what  $\xi_n$'s might be for $n>N$. For the expansion \eqref{uncase}, Theorem \ref{conv-series}, Example \ref{cn} and Corollary \ref{finite} give specific and simple characteristics of the possible values of $(\xi_n)_{n=1}^\infty$.

Regarding (b), it is not known what might be a general, non-trivial $W(u^0)$ such that the expansion \eqref{expand}, when generated by $W(u^0)$, is convergent. (See \cite{FHOZ1,FHOZ2} for more information about this topic.) In contrast, the expansion \eqref{uncase} is just a power series having $\xi_n$'s as its coefficients. Hence, a simple condition such as $\limsup_{n\to\infty} |\xi_n|_{\alpha,\sigma}^{1/n}<\infty$ is enough to conclude that the expansion \eqref{uncase} converges in $G_{\alpha,\sigma}$ for sufficiently large $t$.
\end{remark}

\medskip
The second part of this section investigates the possible type of decay for the Leray-Hopf weak solutions after all the power-decaying terms. 
More specifically, in Theorem \ref{conv-series}, $u(t)\polsim \sum_{n=1}^{\infty} \xi_n t^{-n}$ in $G_{\alpha,\sigma}$ with $ \sum_{n=1}^{\infty} \xi_n t^{-n}$  converging to $\bar u(t)$ in $G_{\alpha+1,\sigma}$ for all $t\ge 1$.
Hence, by this expansion and the theory of power series, 
\beq\label{difdec}
|u(t)-\bar u(t)|_{\alpha,\sigma}=\mathcal O(t^{-\mu}) \quad\forall \mu>0.
\eeq

The next theorem states that, the remainder in \eqref{difdec}, decays faster and, in fact, it decays exponentially.

\begin{theorem}\label{nextrate}
Given $\alpha\ge 1/2$ and $\sigma\ge 0$. Suppose that 
\beq \label{fseries}
f(t)=\sum_{n=1}^\infty \phi_n t^{-n} \quad \text{in }G_{\alpha,\sigma},\quad \forall t\ge T_0, \text{ for some }T_0>0, 
\eeq
where $(\phi_n)_{n=1}^\infty$ is a sequence in $G_{\alpha,\sigma}$.

Let $(\xi_n)_{n=1}^\infty$ be defined by  \eqref{xincase}, and assume  
\beq\label{xseries}
\limsup_{n\to\infty}\Big(|\xi_n|_{\alpha+1,\sigma}^{1/n}\Big)<\infty.
\eeq

Then
\begin{enumerate}[label=\rm (\roman*)]

 \item The series  $\sum_{n=1}^\infty \xi_n t^{-n}$  converges in $G_{\alpha+1,\sigma}$  to a function $\bar u(t)$ for sufficiently large $t$. 

\item If $u(t)$ is a Leray-Hopf weak solution of \eqref{fctnse} and \eqref{uzero}, then one has for all $\rho \in (0,1/2]$ that, as $t \to \infty$, 
 \beq\label{udif}
 |u(t)-\bar u(t)|_{\alpha+\frac{1}{2}-\rho,\sigma}=
 \left\{
 \begin{aligned}
&\mathcal O( e^{-t}), &&\text{ if }\xi_1= 0,\\
&\mathcal O( t^\beta e^{-t})   \text{ for some }\beta>0,&&\text{ if } \xi_1\ne 0.  
 \end{aligned}
 \right.
 \eeq
\end{enumerate}
\end{theorem}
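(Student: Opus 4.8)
### Proof Proposal

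The plan is to reduce \eqref{udif} to a statement about the remainder equation, and then apply the exponential-decay machinery of \cite{HM2} (Theorem 2.2 there, recalled in the Introduction) to the difference $u - \bar u$. First I would establish part (i): the hypothesis \eqref{xseries} gives a finite radius-of-divergence bound, so the power series $\sum_{n=1}^\infty \xi_n t^{-n}$ converges in $G_{\alpha+1,\sigma}$ for all $t$ larger than some $T_1$, and defines $\bar u(t)$ there; uniform convergence on $[T_1,\infty)$ follows since $|\xi_n t^{-n}|_{\alpha+1,\sigma}\le |\xi_n|_{\alpha+1,\sigma} T_1^{-n}$ for $t\ge T_1$. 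Likewise, shrinking $T_1$ if needed, the series $\sum \phi_n t^{-n}$ for $f$ converges in $G_{\alpha,\sigma}$ on $[T_1,\infty)$ (the $\phi_n$ are obtained from the $\xi_n$ via \eqref{phi-xi}, and the $K^\alpha$-bilinear bound \eqref{AalphaB} together with \eqref{xseries} controls their growth rate).

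Next I would write down the equation satisfied by $\bar u$. Differentiating the series termwise (justified by uniform convergence of the differentiated series, again via \eqref{xseries}) and using the recursion \eqref{xincase} — equivalently \eqref{phi-xi} — one checks that
\beqs
\bar u'(t) + A\bar u(t) + B(\bar u(t),\bar u(t)) = f(t) \quad\text{in } G_{\alpha,\sigma},\ \forall t\ge T_1,
\eeqs
exactly as in the formal manipulations in the proof of Theorem \ref{finitetheo}: the $t^{-n}$-coefficient of $A\bar u + B(\bar u,\bar u) - \bar u'$ is $A\xi_n - (n-1)\xi_{n-1} + \sum_{k=1}^{n-1}B(\xi_k,\xi_{n-k}) = \phi_n$. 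Here the Cauchy-product for $B(\bar u,\bar u)$ converges absolutely in $G_{\alpha+1/2,\sigma}$ by \eqref{AalphaB} and \eqref{xseries}. Hence, setting $v(t) = u(t) - \bar u(t)$ and subtracting, $v$ solves
\beqs
v' + Av + B(v,v) + B(\bar u, v) + B(v,\bar u) = 0 \quad\text{in } V',\ \text{on } (T_1,\infty).
\eeqs

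Now I would invoke the power-decay already proven: by Theorem \ref{theo23} (applicable since $f$ trivially satisfies \eqref{falphaonly} with, say, $\mu_1=1$), $u(t)$ is regular for large $t$ and decays like $t^{-1}$ in $G_{\alpha+1-\varepsilon,\sigma}$; and $\bar u(t)=\mathcal O(t^{-1})$ in $G_{\alpha+1,\sigma}$ as well. The key point — which is the engine of the whole argument — is that $v(t)$ decays faster than any power of $t$: this is \eqref{difdec}, which follows from the fact that both $u$ and $\bar u$ have the \emph{same} asymptotic expansion $\sum \xi_n t^{-n}$ in $G_{\alpha+1-\rho,\sigma}$ (the former by Theorem \ref{mainthm}, the latter by the theory of power series). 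With $v = \mathcal O(t^{-\mu})$ for every $\mu$, the linear operator $u \mapsto B(\bar u, u) + B(u,\bar u)$ is a small perturbation of $A$ (since $\bar u = \mathcal O(t^{-1})\to 0$), and the whole right-hand side forcing the $v$-equation decays super-polynomially. I would then rewrite the $v$-equation as
\beqs
v' + Av = g(t),\qquad g(t) = -B(v,v) - B(\bar u,v) - B(v,\bar u),
\eeqs
observe $|g(t)|_{\alpha-1/2,\sigma}=\mathcal O(t^{-\mu})$ for every $\mu$ (using \eqref{AalphaB}: $|B(v,v)|_{\alpha-1/2,\sigma}\le K^{\alpha-1/2}|v|_{\alpha,\sigma}^2$, and similarly for the mixed terms, with $|v|_{\alpha,\sigma}$ controlled by the $G_{\alpha+1/2-\rho,\sigma}$-decay of $v$ once we bootstrap regularity), and then run the standard Gevrey energy estimate on $A^{\alpha-1/2}e^{\varphi(t)A^{1/2}}v$ in the style of Theorem \ref{theo22}. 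Projecting onto $(I-P_1)H$ gives a differential inequality whose homogeneous part decays like $e^{-\lambda_2 t}$ with $\lambda_2\ge 2$ (the second eigenvalue of $A$), so the super-polynomially small forcing is absorbed and $|(I-P_1)v(t)|$ decays like $e^{-2t}$, hence faster than $e^{-t}$. For the finite-dimensional part $P_1 v$, equation $ (P_1 v)' + P_1 v = P_1 g + (\text{super-polynomially small})$ gives, by the variation-of-constants formula, $P_1 v(t) = \mathcal O(t^\beta e^{-t})$ in general, and $\mathcal O(e^{-t})$ precisely when the relevant $P_1$-component vanishes — which, tracing through, is governed by whether $\xi_1 = A^{-1}\phi_1$ is zero, i.e. whether $\phi_1 = 0$.

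I expect the main obstacle to be the last step: carefully controlling $P_1 v(t)$ and pinning down the dichotomy in \eqref{udif}. One must show that the leading exponential behavior of $P_1 v$ is $c\, t^{\beta} e^{-t}$ with $\beta = 0$ exactly when $\xi_1 = 0$. This requires identifying the "source" of the $e^{-t}$ mode: when $\xi_1\ne 0$, the term $B(\bar u, v) + B(v, \bar u)$ contains, at leading order, $t^{-1}(B(\xi_1,v)+B(v,\xi_1))$, and feeding $v \sim e^{-t}$-type decay back in produces a resonant $t^{-1} e^{-t}$ forcing on the $\lambda_1 = 1$ eigenspace, which upon integration yields the logarithmic-type polynomial factor $t^\beta$ (here $\beta$ can be taken as any positive number absorbing the $\log$-growth, or made precise as the order of the resonance). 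When $\xi_1 = 0$ this resonance disappears and $P_1 v$ inherits clean $e^{-t}$ decay from its own linear part. A secondary technical point is the bootstrapping of regularity for $v$ up to $G_{\alpha+1/2-\rho,\sigma}$: this is handled exactly as in Lemma \ref{linearNSE}(i) and Theorem \ref{theo23}, Part B, applied to the $v$-equation with its super-polynomially decaying forcing, so it costs nothing essentially new. The restriction $\rho\in(0,1/2]$ in the statement reflects that we only gain one-half derivative less than a full unit over the forcing regularity $G_{\alpha-1/2,\sigma}$, consistently with $|g|_{\alpha-1/2,\sigma}$ being the natural norm in which the forcing is small.
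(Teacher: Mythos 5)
Your part (i) and your derivation of the equation $\bar u'+A\bar u+B(\bar u,\bar u)=f$ match the paper. The gap is in the final step, where you pass from ``$g=-B(v,v)-B(\bar u,v)-B(v,\bar u)$ decays super-polynomially'' to ``$v$ decays exponentially.'' That implication is false: for the equation $v'+Av=g$, a forcing that is merely $\mathcal O(t^{-\mu})$ for every $\mu$ (e.g.\ $|g(t)|\sim e^{-\sqrt t}$) produces a Duhamel term $\int_T^t e^{-(t-s)A}g(s)\,ds$ whose dominant contribution comes from $s$ near $t$ and is again only of size $|g(t)|$; nothing forces it below $e^{-\sqrt t}$, let alone down to $e^{-2t}$ on $(I-P_1)H$ as you claim. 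Since the only decay you have for $v$ at the start is the super-polynomial bound \eqref{difdec}, treating the coupling terms as an external forcing can never launch an exponential bound --- the argument is circular in the wrong direction. The paper avoids this by \emph{not} freezing $g$: it keeps the remainder equation in the form $w'+Aw+B(w,u)+B(\bar u,w)=0$, i.e.\ \emph{linear} in $w$ with coefficients of size $|u|_{\alpha+1-\rho,\sigma}+|\bar u|_{\alpha+1-\rho,\sigma}=\mathcal O(t^{-1})$, and runs an energy/Gronwall estimate $\frac{d}{dt}|w|^2\le -2\bigl[1-K^{\alpha+\frac12-\rho}(|u|+|\bar u|)\bigr]|w|^2$ (via Galerkin projections $P_N$), yielding
\begin{equation*}
|w(t)|_{\alpha+\frac12-\rho,\sigma}\le |w(T_*)|_{\alpha+\frac12-\rho,\sigma}\,e^{-(t-T_*)}\,e^{K^{\alpha+\frac12-\rho}\int_{T_*}^t(|u|_{\alpha+1-\rho,\sigma}+|\bar u|_{\alpha+1-\rho,\sigma})\,d\tau}.
\end{equation*}
The exponential rate comes from the spectral gap of $A$ beating the $\mathcal O(t^{-1})$ linear perturbation, not from smallness of a forcing.

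Your explanation of the dichotomy is also not the actual mechanism. In the paper, the split in \eqref{udif} has nothing to do with a resonance of $P_1g$ against the eigenvalue $\lambda_1=1$: it comes entirely from the Gronwall exponent above. If $\xi_1\ne 0$ then $|u|,|\bar u|=\mathcal O(t^{-1})$, so $\int_{T_*}^t(\cdots)\,d\tau\sim 2D_0\ln t$ and the correction factor is $t^\beta$ with $\beta=2K^{\alpha+\frac12-\rho}D_0$; if $\xi_1=0$ then $|u|,|\bar u|=\mathcal O(t^{-2})$, the integral is bounded, and one gets a clean $\mathcal O(e^{-t})$. Your ``resonant $t^{-1}e^{-t}$ forcing on $R_1H$'' picture would, even if the bootstrapping could be started, produce factors $(\ln t)^k$ at each iteration and would affect the whole of $w$, not just $P_1w$; summing that iteration is precisely what the Gronwall inequality does, so you should replace the frozen-forcing/variation-of-constants step by the linear Gronwall argument from the outset.
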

\begin{proof}

(i) This part is a straight consequence of power series theory in Banach spaces, see e.g. \cite[Chapter~IX]{Dieudonne1969}.
Indeed, \eqref{xseries} implies that there is $R>0$ such that
$\sum_{n=1}^\infty \xi_n z^n$ converges in $G_{\alpha+1,\sigma}$ absolutely and uniformly for $z\in[-R,R]$.
Hence, denoting $T_1=1/R$, we have
\beq \label{uu}
\bar u(t)=\sum_{n=1}^\infty \xi_n t^{-n} \text{ converges in $G_{\alpha+1,\sigma}$ absolutely and uniformly for all $t\ge T_1$.}
\eeq 

(ii) One can verify from \eqref{fseries} that $f$ satisfies (A) and (B2). Let  $T_2= \max\{T_0,T_1\}$. 

(a) First, we claim that,  for all $ t > T_2$,
\beq \label{u-bar-eq}
\bar u'(t)+A\bar u(t) +B(\bar u(t),\bar u(t))=f(t) \quad \text{in } G_{\alpha,\sigma}.
\eeq

Indeed, if $t> T_2$ then
\begin{align*}
\bar{u}'(t)&= \sum_{n=1}^{\infty} u_n'(t)=\sum_{n=2}^{\infty}\frac{-(n-1)\xi_{n-1}}{t^{n}} \quad \text{ in } G_{\alpha+1,\sigma}, \\
A\bar{u}(t)&=\sum_{n=1}^{\infty}\frac{A\xi_n}{t^{n}}\quad \text{ in }G_{\alpha,\sigma}. 
\end{align*}

By \eqref{AalphaB}, \eqref{uu}, and Cauchy's product, we infer that 
\beq
 B(\bar{u}(t),\bar{u}(t))=\sum_{n=2}^{\infty}\frac{1}{t^n}\Big[\sum_{k=1}^{n-1}B(\xi_k,\xi_{n-k})\Big]\quad \text{ in $G_{\alpha+1/2,\sigma}$ for $t> T_2$.}
\eeq

Thus, we have for $t> T_2$ that the following identities hold in $G_{\alpha,\sigma}$ 
\begin{align*}
\bar u'(t)+A\bar u(t) +B(\bar u(t),\bar u(t))
&=\frac{A\xi_1}t +  \sum_{n=2}^{\infty}\frac1{t^{n}}\Big\{ -(n-1)\xi_{n-1}+A\xi_n+\sum_{k=1}^{n-1}B(\xi_k,\xi_{n-k})\Big\}\\
&=\sum_{n=1}^\infty \phi_n t^{-n}=f(t).
\end{align*}
This proves  \eqref{u-bar-eq}.

\medskip
(b) Let $\rho \in (0,1/2]$. Let $w=u-\bar u$. 
Suppose there exists $T_*\ge T_2$ such that 
 \beq\label{uKcond}
 K^{\alpha+\frac{1}{2}-\rho}(|u(t)|_{\alpha+1-\rho,\sigma}+|\bar u(t)|_{\alpha+1-\rho,\sigma})<1\quad \forall t>T_*.
 \eeq 

 We claim that,  for $t\ge T_*$,
 \beq\label{w1}
  |w(t)|_{\alpha+1/2-\rho,\sigma}
  \le |w(T_*)|_{\alpha+1/2-\rho,\sigma} e^{-t+T_*}e^{K^{\alpha+\frac{1}{2}-\rho} \int_{T_*}^t (|u(\tau)|_{\alpha+1-\rho,\sigma}+|\bar u(\tau)|_{\alpha+1-\rho,\sigma})d\tau}.
  \eeq
\textit{Proof of this claim.}
Subtracting \eqref{u-bar-eq} from the NSE \eqref{fctnse} yields
 \beq\label{weq2}
 w'+Aw+B(w,u)+B(\bar u,w)=0 \text{ in $V'$ on $(T_2,\infty)$.}
 \eeq

Let $N\in \sigma(A)$, taking $P_N$ of \eqref{weq2} gives
  \beq\label{weq3}
 (P_Nw)'+A(P_N w)+P_N(B(w,u)+B(\bar u,w))=0 \text{ on }(T_2,\infty),
 \eeq
 in the $P_NH$-valued distribution sense.
Denote 
$$A_N=AP_N\text{ and } \tilde w_N=A^{\alpha + \frac{1}{2}-\rho} e^{\sigma A^{1/2}} P_N w=A_N^{\alpha + \frac{1}{2}-\rho} e^{\sigma A_N^{1/2}} P_N w.$$
Then it follows \eqref{weq3} that
  \beq\label{weq4}
 \tilde w_N'=-A \tilde w_N -A^{\alpha + \frac{1}{2}-\rho} e^{\sigma A^{1/2}} P_N(B(w,u)+B(\bar u,w)).
 \eeq

 In the finite dimensional space $P_NH$, we have for $t>T_*$,
 \beq
 |A\tilde w_N|\le \sqrt N |A^{1/2}\tilde w_N| \le\sqrt  N(|u(t)|_{\alpha + 1-\rho,\sigma}+|\bar u(t)|_{\alpha + 1-\rho,\sigma})
 <\sqrt N K^{-(\alpha+1/2-\rho)},
 \eeq
% where $C$ is a positive constant, 
and, by using inequality \eqref{AalphaB},
\begin{align*}
&|A^{\alpha + \frac{1}{2}-\rho} e^{\sigma A^{1/2}} P_N(B(w,u)+B(\bar u,w))|
\le |B(w,u)|_{\alpha + \frac{1}{2}-\rho,\sigma}+|B(\bar u,w)|_{\alpha + \frac{1}{2}-\rho,\sigma}\\
&\le K^{\alpha+1/2-\rho} |w|_{\alpha + 1-\rho,\sigma}(|u|_{\alpha + 1-\rho,\sigma}+|\bar u|_{\alpha + 1-\rho,\sigma})\\
&\le |w|_{\alpha + 1-\rho,\sigma} \le |u|_{\alpha + 1-\rho,\sigma}+|\bar u|_{\alpha + 1-\rho,\sigma}\le K^{-(\alpha+1/2-\rho)}.
\end{align*}

Hence,  both $\tilde w_N$ and $\tilde w_N'$ belong to $L^\infty(T_*,\infty;P_NH)$.
 Thus, see e.g.~\cite[Ch. II, Lemma 3.2]{TemamDynBook},  equation \eqref{weq4} implies that, in the distribution sense on $(T_*,\infty)$,  
 $$ \ddt |\tilde w_N|^2=2\inprod{\tilde w_N',\tilde w_N}
 =-2\inprod{A \tilde w_N,\tilde w_N} - 2\inprod{A^{\alpha + 1/2-\rho} e^{\sigma A^{1/2}} P_N(B(w,u)+B(\bar u,w)),\tilde w_N}.$$

For $t>T_*$, applying Cauchy-Schwarz inequality and \eqref{AalphaB} yields
 \begin{align*}
  \ddt |\tilde w_N|^2 
  &\le -2|A^{1/2} \tilde w_N|^2 + 2(|B(w,u)|_{\alpha+1/2-\rho,\sigma}+|B(\bar u,w)|_{\alpha+1/2-\rho,\sigma}) |\tilde w_N|\\
  &\le -2|A^{1/2} \tilde w_N|^2 +  2K^{\alpha+1/2-\rho} |w|_{\alpha+1-\rho,\sigma}(|u|_{\alpha+1-\rho,\sigma}+|\bar u|_{\alpha+1-\rho,\sigma})|A^{1/2}\tilde w_N|.
 \end{align*}
 
In the last term,  
 $$ |w|_{\alpha+1-\rho,\sigma} \le |P_Nw|_{\alpha+1-\rho,\sigma}+|({\rm Id}-P_N)w|_{\alpha+1-\rho,\sigma} =|A^{1/2}\tilde w_N|+|({\rm Id}-P_N)w|_{\alpha+1-\rho,\sigma}.$$
 
 Thus,
  \begin{align*}
  \ddt |\tilde w_N|^2
  &\le -2\Big[1-K^{\alpha+\frac{1}{2}-\rho}(|u|_{\alpha + 1-\rho,\sigma}+|\bar u|_{\alpha + 1-\rho,\sigma})\Big]|A^{1/2}\tilde w_N|^2\\
  &\quad + 2K^{\alpha+\frac{1}{2}-\rho}(|u|_{\alpha + 1-\rho,\sigma}+|\bar u|_{\alpha + 1-\rho,\sigma})|({\rm Id}-P_N)w|_{\alpha + 1-\rho,\sigma}  |A^{1/2}\tilde w_N| .
 \end{align*}

 Then using condition \eqref{uKcond} and the relation $|A^{1/2}\tilde w_N|\ge |\tilde w_N|$ for the first term on the right-hand side, we derive for $t>T_*$,
 \beq\label{diftw}\begin{aligned}
  \ddt |\tilde w_N|^2
   &\le -2\Big[1-K^{\alpha+\frac{1}{2}-\rho}(|u|_{\alpha + 1-\rho,\sigma}+|\bar u|_{\alpha + 1-\rho,\sigma})\Big]|\tilde w_N|^2\\
     &\quad + 2|({\rm Id}-P_N)w|_{\alpha + 1-\rho,\sigma}  |A^{1/2}\tilde w_N|.
 \end{aligned}
\eeq 

By using the integrating factor, even for weak derivatives, one still obtains from \eqref{diftw} the following elementary inequality 
 \begin{align*}
    &|P_Nw(t)|_{\alpha + 1/2-\rho,\sigma}^2
  \le |P_Nw(T_*)|_{\alpha + 1/2-\rho,\sigma}^2e^{-2(t-T_*)}e^{ 2 K^{\alpha+\frac{1}{2}-\rho} \int_{T_*}^t (|u(\tau)|_{\alpha + 1-\rho,\sigma}+|\bar u(\tau)|_{\alpha + 1-\rho,\sigma})d\tau}\\
  &+2\int_{T_*}^t e^{ -2  \int_s^t \big[1-K^{\alpha+\frac{1}{2}-\rho}(|u(\tau)|_{\alpha + 1-\rho,\sigma}+|\bar u(\tau)|_{\alpha + 1-\rho,\sigma})\big]d\tau}
  |({\rm Id}-P_N)w(s)|_{\alpha + 1-\rho,\sigma}  |A^{1/2}\tilde w_N(s)|ds.
 \end{align*}

 Utilizing \eqref{uKcond} in the second summand on the right-hand side of the preceding inequality yields
 \beq\label{PNw} \begin{aligned}
    |P_Nw(t)|_{\alpha + 1/2-\rho,\sigma}^2
  &\le |P_Nw(T_*)|_{\alpha,\sigma}^2e^{-2(t-T_*)}e^{ 2 K^{\alpha+\frac{1}{2}-\rho} \int_{T_*}^t (|u(\tau)|_{\alpha + 1-\rho,\sigma}+|\bar u(\tau)|_{\alpha + 1-\rho,\sigma})d\tau}\\
  &\quad +2\int_{T_*}^t |({\rm Id}-P_N)w(s)|_{\alpha + 1-\rho,\sigma}  |A^{1/2}\tilde w_N(s)|ds.
 \end{aligned}
\eeq
 
 Observe, for all $s>T_*$,  that 
\begin{align*}
|({\rm Id}-P_N)w(s)|_{\alpha + 1-\rho,\sigma}  |A^{1/2}\tilde w_N(s)|
&\le  |w(s)|_{\alpha + 1-\rho,\sigma}^2\\
& \le (|u(s)|_{\alpha + 1-\rho,\sigma}+|\bar u(s)|_{\alpha + 1-\rho,\sigma})^2<4K^{-2(\alpha+\frac{1}{2}-\rho)}. 
\end{align*}

 We pass $N\to\infty$ in \eqref{PNw}, noticing,  by Lebesgue's  dominated convergence theorem,  that the last integral goes to zero, and obtain 
  \beqs
  |w(t)|_{\alpha+1/2-\rho,\sigma}^2
  \le |w(T_*)|_{\alpha+1/2-\rho,\sigma}^2e^{-2(t-T_*)}e^{ 2 K^{\alpha+\frac{1}{2}-\rho} \int_{T_*}^t (|u(\tau)|_{\alpha+1-\rho,\sigma}+|\bar u(\tau)|_{\alpha+1-\rho,\sigma})d\tau}.
  \eeqs
Inequality \eqref{w1} then follows.

\medskip
(c) We consider the two specified cases in \eqref{udif}.
First, we note, thanks to the expansion \eqref{uexpand} and \eqref{uu},  that
\beq\label{uubar}
|u(t)-\xi_1 t^{-1}-\xi_2 t^{-2}|_{\alpha + 1-\rho,\sigma},\ 
|\bar u(t)-\xi_1 t^{-1}-\xi_2 t^{-2}|_{\alpha + 1-\rho,\sigma} = \mathcal O(t^{-3}).
\eeq

\textit{Case $\xi_1=0$.}  Thanks to \eqref{uubar}, there are $T_*>0$ and $D_0=|\xi_2|_{\alpha + 1-\rho,\sigma}+1$ such that 
\beq\label{uu1}
|u(t)|_{\alpha + 1-\rho,\sigma},\
|\bar u(t)|_{\alpha + 1-\rho,\sigma}\le D_0 /t^2 < \frac{K^{-(\alpha+\frac{1}{2}-\rho)}}{2} \quad \forall t\ge T_*.
\eeq

Hence, condition \eqref{uKcond} is met. 
By \eqref{w1} and \eqref{uu1}, one has  for $t\ge T_*$ that
 \beqs
  |w(t)|_{\alpha + 1/2-\rho,\sigma}
\le |w(T_*)|_{\alpha + 1/2-\rho,\sigma} e^{-t+T_*}e^{K^{\alpha+\frac{1}{2}-\rho} \int_{T_*}^t 2D_0/\tau^2 d\tau}\le M_1 e^{-t},
 \eeqs
 where $M_1=|w(T_*)|_{\alpha+ 1/2-\rho,\sigma}e^{T_*+2K^{\alpha+\frac{1}{2}-\rho}D_0/T_*}$.
This proves the first relation in \eqref{udif}.

\medskip
\textit{Case $\xi_1\ne 0$.} Again, thanks to \eqref{uubar}, there are $T_*\ge 1$ and $D_0=2|\xi_1|_{\alpha + 1-\rho,\sigma}$ such that 
\beq\label{uu2}
|u(t)|_{\alpha + 1-\rho,\sigma},\
|\bar u(t)|_{\alpha + 1-\rho,\sigma}\le D_0 /t <\frac{K^{-(\alpha+\frac{1}{2}-\rho)}}{2} \quad \forall t\ge T_*.
\eeq
 Again, condition \eqref{uKcond} is satisfied, and \eqref{w1}, together with \eqref{uu2}, implies,  for $t\ge T_*$, that
 \begin{align*}
   |w(t)|_{\alpha + 1/2-\rho,\sigma}
  &\le |w(T_*)|_{\alpha + 1/2-\rho,\sigma} e^{-t+T_*}e^{K^{\alpha+\frac{1}{2}-\rho} \int_{T_*}^t 2D_0/\tau d\tau} \\
  & \le |w(T_*)|_{\alpha + 1/2-\rho,\sigma} e^{-t+T_*}e^{2K^{\alpha+\frac{1}{2}-\rho} D_0 \ln t}
  = M_2 t^\beta e^{-t},
 \end{align*}
 where $M_2=|w(T_*)|_{\alpha + 1/2-\rho,\sigma}e^{T_*}$ and $\beta=2K^{\alpha+\frac{1}{2}-\rho} D_0$.
This proves the second relation in \eqref{udif}.
\end{proof}

\begin{remark} %Below are the final remarks.
%(a) An equivalent condition to \eqref{xseries} is that there exists $z_0\in\R\setminus\{0\}$ such that the series $\sum_{n=1}^\infty \xi_n z_0^{n}$ converges in  $G_{\alpha+1,\sigma}$.
(a) An equivalent condition to \eqref{xseries} is that the series $\sum_{n=1}^\infty \xi_n t^{-n}$ of expansion \eqref{uncase} converges in $G_{\alpha+1,\sigma}$ at least at one point $t=t_0\in(0,\infty)$.

(b) According to part (ii) of Theorem \ref{nextrate}, the remainder $u(t)-\bar u(t)$ cannot have any intermediate decay between the power and exponential ones. For example, it cannot be approximated by any $e^{-\mu \sqrt t}$ for $\mu\in(0,\infty)$.

\end{remark}

\bigskip
%\section*{}
\noindent\textbf{\large Acknowledgements.} 
L.H. gratefully acknowledges the support for his research by the NSF grant DMS-1412796.

%\myclearpage
%%%%%%%%%%%%%%%%%%%%%%%%%%%%%%%%%%%%%%%%%%%%%%%%%%%%%%%%%%%%%%%%%%%%%
% Bibliography using BibTeX
%%%%%%%%%%%%%%%%%%%%%%%%%%%%%%%%%%%%%%%%%%%%%%%%%%%%%%%%%%%%%%%%%%%%%
%\bibliography{paperbaseall}{}

\begin{thebibliography}{10}

\bibitem{CFbook}
{\sc Constantin, P., and Foias, C.}
\newblock {\em {Navier-{S}tokes equations}}.
\newblock {Chicago Lectures in Mathematics}. University of Chicago Press,
  Chicago, IL, 1988.

\bibitem{Dieudonne1969}
{\sc Dieudonn\'e, J.}
\newblock {\em Foundations of modern analysis}.
\newblock Academic Press, New York-London, 1969.
\newblock Enlarged and corrected printing, Pure and Applied Mathematics, Vol.
  10-I.

\bibitem{DE1968}
{\sc Dyer, R.~H., and Edmunds, D.~E.}
\newblock Lower bounds for solutions of the {N}avier-{S}tokes equations.
\newblock {\em Proc. London Math. Soc. (3) 18\/} (1968), 169--178.

\bibitem{FHN1}
{\sc Foias, C., Hoang, L., and Nicolaenko, B.}
\newblock {On the helicity in 3{D}-periodic {N}avier-{S}tokes equations. {I}.
  {T}he non-statistical case}.
\newblock {\em Proc. Lond. Math. Soc. (3) 94}, 1 (2007), 53--90.

\bibitem{FHN2}
{\sc Foias, C., Hoang, L., and Nicolaenko, B.}
\newblock {On the helicity in 3{D}-periodic {N}avier-{S}tokes equations. {II}.
  {T}he statistical case}.
\newblock {\em Comm. Math. Phys. 290}, 2 (2009), 679--717.

\bibitem{FHOZ1}
{\sc Foias, C., Hoang, L., Olson, E., and Ziane, M.}
\newblock {On the solutions to the normal form of the {N}avier-{S}tokes
  equations}.
\newblock {\em Indiana Univ. Math. J. 55}, 2 (2006), 631--686.

\bibitem{FHOZ2}
{\sc Foias, C., Hoang, L., Olson, E., and Ziane, M.}
\newblock {The normal form of the {N}avier-{S}tokes equations in suitable
  normed spaces}.
\newblock {\em Ann. Inst. H. Poincar{\'e} Anal. Non Lin{\'e}aire 26}, 5 (2009),
  1635--1673.

\bibitem{FHS1}
{\sc Foias, C., Hoang, L., and Saut, J.-C.}
\newblock {Asymptotic integration of {N}avier-{S}tokes equations with potential
  forces. {II}. {A}n explicit {P}oincar{\'e}-{D}ulac normal form}.
\newblock {\em J. Funct. Anal. 260}, 10 (2011), 3007--3035.

\bibitem{FHS2}
{\sc Foias, C., Hoang, L., and Saut, J.-C.}
\newblock {N}avier and {S}tokes meet {P}oincar\'e and {D}ulac.
\newblock {\em J. Appl. Anal. Comput.\/} (2018), 33 pp.
\newblock accepted, preprint https://arxiv.org/abs/1711.07184.

\bibitem{FMRTbook}
{\sc Foias, C., Manley, O., Rosa, R., and Temam, R.}
\newblock {\em {Navier-{S}tokes equations and turbulence}}, vol.~83 of {\em
  {Encyclopedia of Mathematics and its Applications}}.
\newblock Cambridge University Press, Cambridge, 2001.

\bibitem{FRT2010}
{\sc Foias, C., Rosa, R., and Temam, R.}
\newblock {Topological properties of the weak global attractor of the
  three-dimensional {N}avier-{S}tokes equations}.
\newblock {\em Discrete Contin. Dyn. Syst. 27}, 4 (2010), 1611--1631.

\bibitem{FS83}
{\sc Foias, C., and Saut, J.-C.}
\newblock {Asymptotic behavior, as {$t\rightarrow \infty $}, of solutions of
  the {N}avier-{S}tokes equations}.
\newblock In {\em {Nonlinear partial differential equations and their
  applications. Coll{\`e}ge de France Seminar, Vol. IV (Paris, 1981/1982)}},
  vol.~84 of {\em {Res. Notes in Math.}} Pitman, Boston, MA, 1983, pp.~74--86.

\bibitem{FS84a}
{\sc Foias, C., and Saut, J.-C.}
\newblock {Asymptotic behavior, as {$t\rightarrow +\infty $}, of solutions of
  {N}avier-{S}tokes equations and nonlinear spectral manifolds}.
\newblock {\em Indiana Univ. Math. J. 33}, 3 (1984), 459--477.

\bibitem{FS84b}
{\sc Foias, C., and Saut, J.-C.}
\newblock {On the smoothness of the nonlinear spectral manifolds associated to
  the {N}avier-{S}tokes equations}.
\newblock {\em Indiana Univ. Math. J. 33}, 6 (1984), 911--926.

\bibitem{FS87}
{\sc Foias, C., and Saut, J.-C.}
\newblock {Linearization and normal form of the {N}avier-{S}tokes equations
  with potential forces}.
\newblock {\em Ann. Inst. H. Poincar{\'e} Anal. Non Lin{\'e}aire 4}, 1 (1987),
  1--47.

\bibitem{FS91}
{\sc Foias, C., and Saut, J.-C.}
\newblock {Asymptotic integration of {N}avier-{S}tokes equations with potential
  forces. {I}}.
\newblock {\em Indiana Univ. Math. J. 40}, 1 (1991), 305--320.

\bibitem{FT-Gevrey}
{\sc Foias, C., and Temam, R.}
\newblock {Gevrey class regularity for the solutions of the {N}avier-{S}tokes
  equations}.
\newblock {\em J. Funct. Anal. 87}, 2 (1989), 359--369.

\bibitem{HM1}
{\sc Hoang, L.~T., and Martinez, V.~R.}
\newblock Asymptotic expansion in {G}evrey spaces for solutions of
  {N}avier-{S}tokes equations.
\newblock {\em Asymptot. Anal. 104}, 3--4 (2017), 167--190.

\bibitem{HM2}
{\sc Hoang, L.~T., and Martinez, V.~R.}
\newblock Asymptotic expansion for solutions of the {N}avier-{S}tokes equations
  with non-potential body forces.
\newblock {\em J. Math. Anal. Appl. 462}, 1 (2018), 84--113.

\bibitem{KukaDecay2011}
{\sc Kukavica, I., and Reis, E.}
\newblock Asymptotic expansion for solutions of the {N}avier-{S}tokes equations
  with potential forces.
\newblock {\em J. Differential Equations 250}, 1 (2011), 607--622.

\bibitem{LadyFlowbook69}
{\sc Ladyzhenskaya, O.~A.}
\newblock {\em {The mathematical theory of viscous incompressible flow}}.
\newblock {Second English edition, revised and enlarged. Translated from the
  Russian by Richard A. Silverman and John Chu. Mathematics and its
  Applications, Vol. 2}. Gordon and Breach Science Publishers, New York, 1969.

\bibitem{Shi2000}
{\sc Shi, Y.}
\newblock A {F}oias-{S}aut type of expansion for dissipative wave equations.
\newblock {\em Comm. Partial Differential Equations 25}, 11-12 (2000),
  2287--2331.

\bibitem{TemamSIAMbook}
{\sc Temam, R.}
\newblock {\em {Navier-{S}tokes equations and nonlinear functional analysis}},
  second~ed., vol.~66 of {\em {CBMS-NSF Regional Conference Series in Applied
  Mathematics}}.
\newblock Society for Industrial and Applied Mathematics (SIAM), Philadelphia,
  PA, 1995.

\bibitem{TemamDynBook}
{\sc Temam, R.}
\newblock {\em {Infinite-dimensional dynamical systems in mechanics and
  physics}}, second~ed., vol.~68 of {\em {Applied Mathematical Sciences}}.
\newblock Springer-Verlag, New York, 1997.

\bibitem{TemamAMSbook}
{\sc Temam, R.}
\newblock {\em {Navier-{S}tokes equations}}.
\newblock AMS Chelsea Publishing, Providence, RI, 2001.
\newblock Theory and numerical analysis, Reprint of the 1984 edition.

\end{thebibliography}
\bibliographystyle{acm}
\def\cprime{$'$}\def\cprime{$'$} \def\cprime{$'$}

\end{document}